\newcommand{\apref}[3]{\hyperref[#2]{#1\ref*{#2}#3}}
\theoremstyle{plain}
\newtheorem{prop}{Proposition}[section]
\newtheorem{lemma}[prop]{Lemma}
\newtheorem{thm}[prop]{Theorem}
\theoremstyle{definition}
\newtheorem{conj}[prop]{Conjecture}
\theoremstyle{remark}
\newtheorem{remark}[prop]{Remark}
\DeclareMathOperator{\Gen}{Gen}
\newcommand{\redu}{\text{red}}
\DeclareMathOperator{\PSL}{PSL}
\DeclareMathOperator{\PGL}{PGL}
\DeclareMathOperator{\Tr}{Tr}
\DeclareMathOperator{\Ima}{Im}
\DeclareMathOperator{\Rea}{Re}
\newcommand{\st}{\text{st}}
\newcommand{\dec}{\text{dec}}
\newcommand\N{\mathbb{N}}
\newcommand\R{\mathbb{R}}
\newcommand\Z{\mathbb{Z}}
\newcommand\C{\mathbb{C}}
\newcommand{\h}{\mathbb{H}}
\newcommand{\mc}[1]{\mathcal #1}
\newcommand{\wt}{\widetilde}
\newcommand{\wh}{\widehat}
\newcommand{\eps}{\varepsilon}
\DeclareMathOperator{\FE}{FE}
\DeclareMathOperator{\id}{id}
\DeclareMathOperator{\Fct}{Fct}
\newcommand{\sceq}{\mathrel{\mathop:}=}
\newcommand{\bmat}[4]{\begin{bmatrix} #1&#2\\#3&#4\end{bmatrix}}
\newcommand{\textbmat}[4]{\left[\begin{smallmatrix} #1&#2 \\ #3&#4
\end{smallmatrix}\right]}
\begin{document}

\title[Odd and even Maass cusp forms]{Odd and even Maass cusp forms for Hecke triangle groups, and the billiard flow}
\author[A.\@ Pohl]{Anke D.\@ Pohl}
\address{Mathematisches Institut, Georg-August-Universit\"at G\"ottingen,  Bunsenstr. 3-5, 37073 G\"ottingen}
\email{pohl@uni-math.gwdg.de}
\subjclass[2010]{Primary: 37C30, 11F72; Secondary: 11M36, 37B10, 37D35, 37D40}
\keywords{Hecke triangle groups, Maass cusp forms, transfer operator, Selberg zeta function, billiard flow, Phillips-Sarnak conjecture}
\begin{abstract} 
By a transfer operator approach to Maass cusp forms and the Selberg zeta function for cofinite Hecke triangle groups, M.\@ M\"oller and the author found a factorization of the Selberg zeta function into a product of Fredholm determinants of transfer-operator-like families:
\[
 Z(s) = \det(1-\mc L_s^+)\det(1-\mc L_s^-).
\]
In this article we show that the operator families $\mc L_s^\pm$ arise as families of transfer operators for the triangle groups underlying the Hecke triangle groups, and that for $s\in\C$, $\Rea s=\tfrac12$, the operator $\mc L_s^+$ (resp.\@ $\mc L_s^-$) has a $1$-eigenfunction if and only if there exists an even (resp.\@ odd) Maass cusp form with eigenvalue $s(1-s)$. For nonarithmetic Hecke triangle groups, this result provides a new formulation of the Phillips-Sarnak conjecture on nonexistence of even Maass cusp forms.
\end{abstract}
\thanks{The author acknowledges the support by the Volkswagen Foundation}
\maketitle


\section{Introduction and statement of main results}

This article is part of a program to study the connections between Maass cusp forms for Fuchsian groups and eigenfunctions of transfer operators derived from discretizations of the geodesic flow on the associated hyperbolic Riemannian surfaces and orbifolds \cite{Pohl_diss, Hilgert_Pohl, Pohl_Symdyn2d, Moeller_Pohl, Pohl_mcf_Gamma0p, Pohl_mcf_general,  Mayer_selberg, Mayer_thermo, Chang_Mayer_eigen, Chang_Mayer_extension, Efrat_spectral, Mayer_Muehlenbruch_Stroemberg, Morita_transfer, Pollicott, Deitmar_Hilgert,  Lewis,   Lewis_Zagier, BLZ_part2, Pohl_hecke_infinite}. Here, we consider the cofinite Hecke triangle groups
\[
 \Gamma_q \sceq \left\langle \bmat{0}{1}{-1}{0}, \bmat{1}{2\cos\frac{\pi}{q}}{0}{1}\right\rangle \quad\leq \PSL_2(\R)\qquad (q\in\N_{\geq 3}).
\]
They form a mixed family of arithmetic and nonarithmetic nonuniform Fuchsian lattices with $\Gamma_3$ being the modular group $\PSL_2(\Z)$ (see Section~\ref{prelims} below for more details). Up to date for none of these Hecke triangle groups the Fourier coefficients of a single Maass cusp form could be provided as explicit numbers. However, for each Hecke triangle group it is known that the Weyl law holds for its odd Maass cusp forms, and for $\Gamma_3$ also for its even Maass cusp forms (see \cite{Roelcke, Selberg_Goe, Venkov_book}). In strict contrast, for nonarithmetic Hecke triangle groups (that is for $\Gamma_q$ other than $\Gamma_3,\Gamma_4,\Gamma_6$) the Phillips-Sarnak conjecture \cite{Phillips_Sarnak_cuspforms, Phillips_Sarnak_weyl, Judge} states that even Maass cusp forms should not exist.

In \cite{Moeller_Pohl}, M\"oller and the author provide transfer operator approaches to Maass cusp forms and the Selberg zeta function for Hecke triangle groups. The transfer operator families arise from the discretization and symbolic dynamics in \cite{Pohl_Symdyn2d} for the geodesic flow on $\Gamma_q\backslash\h$, with $\h$ being the hyperbolic plane, as well as from a certain acceleration of this discretization. The main results in \cite{Moeller_Pohl} are (see Section~\ref{sec_known} below for notation and more details):
\begin{itemize}
\item The space of Maass cusp forms for $\Gamma_q$ with eigenvalue $s(1-s)$ with $s\in\C, \Rea s \in (0,1)$, is isomorphic to the space of $1$-eigenfunctions of certain regularity of the finite-term transfer operator 
\[
 \mc L_{F,s} = \sum_{k=1}^{q-1}\tau_s(g_k).
\]
Here $g_1,\ldots, g_{q-1}$ are certain elements in $\Gamma_q$ and $\tau_s$ denotes the action of the principal series representation with spectral parameter $s$ in the line model. 

We remark that for Hecke triangle groups, the first eigenvalue is strictly larger than $\tfrac14$ (\cite[p.\@ 583(8)]{Hejhal2}). In particular, for each such eigenvalue $s(1-s)$ one has $\Rea s = \tfrac12$. The statement that the $1$-eigenfunction spaces of $\mc L_{F,s}$ are also isomorphic to the Maass cusp form spaces with eigenvalue $s(1-s)$ if $\Rea s\in (0,1)$, $\Rea s \not=\tfrac12$, means that $\mc L_{F,s}$ does not have any such eigenfunctions of the required regularity. 
\item The Selberg zeta function $Z$ equals the Fredholm determinant of the infinite-term transfer operator family $\mc L_{G,s}$ from the accelerated discretization:
\[
 Z(s) = \det(1-\mc L_{G,s}),
\]
where
\[
 \mc L_{G,s} = 
\begin{pmatrix}
0 & \sum\limits_{k=2}^{q-2} \tau_s(g_k) & \sum\limits_{n\in\N}\tau_s(g_1^n)
\\
\sum\limits_{n\in\N} \tau_s(g_{q-1}^n) & \sum\limits_{k=2}^{q-2}\tau_s(g_k) & \sum\limits_{n\in\N}\tau_s(g_1^n)
\\
\sum\limits_{n\in\N} \tau_s(g_{q-1}^n) & \sum\limits_{k=2}^{q-2}\tau_s(g_k) & 0
\end{pmatrix}
\]
is defined on a certain Banach space of holomorphic functions with continuous extensions. Thus, the zeros of $Z$ are determined by the $1$-eigenfunctions of $\mc L_{G,s}$ (and its meromorphic continuation). In particular, Selberg theory implies that the $1$-eigenfunctions of $\mc L_{G,s}$ for $s\in\C$, $\Rea s=\tfrac12$, are related to Maass cusp forms.
\item The symbolic dynamics are in a certain sense compatible with the orientation-reversing Riemannian isometry
\[
 Q \colon z\mapsto \frac{1}{\overline{z}}.
\]
The eigenfunctions of the transfer operators $\mc L_{F,s}$ and $\mc L_{G,s}$ can be split into $\tau_s(Q)$-invariant and $\tau_s(Q)$-anti-invariant parts. The transfer operators themselves commute with the action of $Q$. This allows to find transfer-operator-like families $\mc L_{F,s}^\pm$ and $\mc L_{G,s}^\pm$ such that the $\tau_s(Q)$-invariant (resp.\@ the $\tau_s(Q)$-anti-invariant) eigenfunctions of $\mc L_{F,s}$ are characterized as eigenfunctions of $\mc L_{F,s}^+$ (resp.\@ of $\mc L_{F,s}^-$), and analogously for $\mc L_{G,s}$.
\item The spaces of odd resp.\@ even Maass cusp forms for $\Gamma_q$ are isomorphic to the spaces of $1$-eigenfunctions of $\mc L_{F,s}^-$ resp.\@ of $\mc L_{F,s}^+$.
\item The Selberg zeta function factorizes into a product of two Fredholm determinants
\[
 Z(s) = \det(1-\mc L_{G,s}^+) \det(1-\mc L_{G,s}^-).
\]
\end{itemize}

These results provide a characterization of Maass cusp forms by purely classical dynamical entities as well as a formulation of the Phillips-Sarnak conjecture in terms of nonexistence of $1$-eigenfunctions of the transfer operator $\mc L_{F,s}^+$. Moreover, they lead to the following conjecture.

\begin{conj}[\cite{Moeller_Pohl}]\label{conj}
The spaces of $1$-eigenfunctions of $\mc L_{F,s}^+$ and $\mc L_{G,s}^+$ resp.\@ of $\mc L_{F,s}^-$ and $\mc L_{G,s}^-$ are isomorphic.
\end{conj}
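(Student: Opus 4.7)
The plan is to construct mutually inverse explicit maps between the $1$-eigenfunction spaces of $\mc L_{F,s}$ and $\mc L_{G,s}$, exploiting the fact that $\mc L_{G,s}$ arises from $\mc L_{F,s}$ by an acceleration of the parabolic elements $g_1, g_{q-1}$ via geometric sums $\sum_{n \in \N} \tau_s(g_1^n)$ and $\sum_{n \in \N}\tau_s(g_{q-1}^n)$. Since both operators commute with $\tau_s(Q)$, and the intertwining maps will be built out of the $\Gamma_q$-action, they should automatically restrict to the $\pm$-eigenspaces; thus it suffices to produce a single $\tau_s(Q)$-equivariant isomorphism, after which the two halves of the conjecture follow by passing to $\pm$-parts.

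The concrete construction I would attempt proceeds from the symbolic dynamics of \cite{Pohl_Symdyn2d}. The $3\times3$ block structure of $\mc L_{G,s}$ should correspond to a partition of the underlying symbolic cylinders into three classes encoding whether the current trajectory is entering, inside, or leaving a cusp excursion. Given a $1$-eigenfunction $f$ of $\mc L_{F,s}$, I would define a triple $(f_1,f_2,f_3)$ by taking suitable $\tau_s$-translates/restrictions of $f$ to the respective cylinders, and verify that the relation $f = \mc L_{F,s} f$ telescopes, one parabolic step at a time, into the matrix fixed-point equation $(f_1,f_2,f_3)^T = \mc L_{G,s}(f_1,f_2,f_3)^T$. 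Conversely, given $(f_1,f_2,f_3)$ with $\mc L_{G,s}(f_1,f_2,f_3)^T = (f_1,f_2,f_3)^T$, I would reassemble a candidate $f$ by patching the three components on the union of cylinders, the matrix equation forcing the matching conditions across cylinder boundaries that are needed for $f$ to be a genuine $\mc L_{F,s}$-eigenfunction.

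The main obstacle will be matching the regularity classes on the two sides. Eigenfunctions of $\mc L_{G,s}$ live in the Banach space of holomorphic functions with continuous boundary extensions, whereas eigenfunctions of $\mc L_{F,s}$ are required to satisfy the particular regularity condition from \cite{Moeller_Pohl} that identifies them with Maass cusp forms. For $\Rea s = \tfrac12$ the acceleration series $\sum_n \tau_s(g_1^n) f$ does not converge absolutely, so the forward map must either exploit decay of $f$ encoded in its regularity class at the parabolic fixed points, or be defined by meromorphic continuation; in the latter case the delicate point is to rule out that the continuation loses or gains eigenfunctions. Equivalently phrased, one has to show that the two regularity conditions pick out the same finite-dimensional subspace of the algebraic solution space of the respective fixed-point equation. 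This is where I expect the bulk of the work to lie, and where any $Q$-equivariance of the identification will need to be checked carefully so that the final $\pm$-decomposition survives.
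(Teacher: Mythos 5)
The statement you are addressing is a \emph{conjecture}: the paper does not prove it, and it is currently known in full only for $\Gamma_3$ (via \cite{Chang_Mayer_transop, Lewis_Zagier}). What the paper establishes is the strictly weaker ``spectral version'' (Theorem~\ref{mainintro}): the zeros of $\det(1-\mc L_{G,s}^\pm)$ in $\Rea s>0$ coincide with the spectral parameters of even resp.\@ odd Maass cusp forms. Its route is entirely different from yours: one builds weighted symbolic dynamics for the billiard flow on $\wt\Gamma_q\backslash\h$, identifies $\mc L_{G,s}^\pm$ as the associated transfer operators, computes $\Tr(\mc L_{G,s}^\pm)^n$ by counting regular words representing hyperbolic $\wt\Gamma_q$-conjugacy classes, matches the resulting Fredholm determinants with Venkov's Dirichlet/Neumann Selberg-type zeta functions $Z_\pm^V$ (up to fourth powers and, for even $q$, the boundary-geodesic factor $Z^c_\pm$), and then quotes Venkov's characterization of the zeros of $Z_\pm^V$. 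No map between eigenfunction spaces is constructed at any point, which is exactly why the paper claims only a spectral statement and not the conjecture.

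Your proposal aims at the conjecture itself, and the paragraph you label ``the main obstacle'' is precisely the open content of the conjecture, not a technical detail to be filled in afterwards. The telescoping of $f=\mc L_{F,s}f$ into the matrix fixed-point equation for $\mc L_{G,s}$ via the geometric sums $\sum_{n}\tau_s(g_1^n)$ and $\sum_{n}\tau_s(g_{q-1}^n)$ is indeed how $\mc L_{G,s}$ is obtained from $\mc L_{F,s}$ combinatorially, and it works without difficulty for $\Rea s>1$, where there are no $1$-eigenfunctions of interest. On the critical line $\Rea s=\tfrac12$, where all cuspidal spectral parameters lie, those series diverge termwise, and the two regularity classes involved ($C^\omega_{\dec,s}(\R_{>0};\C)$ on one side, the Banach space $B(\mc D)$ of holomorphic functions with continuous boundary extensions on the other) have no a priori relation; showing that they cut out the same solution spaces of the respective fixed-point equations is exactly what required the full analytic machinery of Lewis--Zagier and Chang--Mayer in the case $q=3$, and remains open for the other Hecke triangle groups. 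As written, your argument establishes nothing beyond what is already implicit in the derivation of $\mc L_{G,s}$; if you want a provable statement rather than the conjecture, the paper's determinant-level detour through the billiard flow and Venkov's zeta functions is the available substitute.
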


For the modular group $\Gamma_3$, the transfer operators $\mc L_{F,s}^\pm$ determine the functional equation of odd resp.\@ even period functions from \cite{Lewis_Zagier} and $\mc L_{G,s}^\pm$ is Mayer's transfer operator resp.\@ its negative \cite{Mayer_selberg}. In this case, Conjecture~\ref{conj} has been established first on a spectral level by Efrat \cite{Efrat_spectral}, who showed that $\det(1-\mc L_{G,s}^+)$ (resp.\@ $\det(1-\mc L_{G,s}^-)$) has a zero for $s\in\C$, $\Rea s >0$, if and only if $s$ is a parameter of the even spectrum (resp.\@ of the odd spectrum) in $L^2(\Gamma_q\backslash\h)$. For the proof he used continued fractions and the reduction theory of indefinite binary quadratic forms. Later, the full statement of Conjecture~\ref{conj} was proven for $\Gamma_3$ by \cite{Chang_Mayer_transop, Lewis_Zagier}.

In this article we prove a spectral version of Conjecture~\ref{conj} for all Hecke triangle groups.

\begin{thm}\label{mainintro}
For $s\in\C$, $\Rea s > 0$, the Fredholm determinant $\det(1-\mc L_{G,s}^-)$ has a zero if and only if $s$ is the spectral parameter of an odd Maass cusp form. The Fredholm determinant $\det(1-\mc L_{G,s}^+)$ has a zero if and only if $s$ is a parameter of the even spectrum.
\end{thm}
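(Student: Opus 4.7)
The plan is to identify $\mc L_{G,s}^+$ and $\mc L_{G,s}^-$ as transfer operators for the billiard flow on the triangle orbifold $\Delta_q\backslash\h$, where $\Delta_q$ is the extended (full) triangle group containing $\Gamma_q$ as its orientation-preserving index-two subgroup, generated together with $\Gamma_q$ by the orientation-reversing involution $Q\colon z\mapsto 1/\overline z$. Once this identification is made, the Fredholm determinants $\det(1-\mc L_{G,s}^\pm)$ should be Selberg zeta functions $Z_{\Delta_q,\pm}(s)$ for $\Delta_q$ twisted by the trivial, resp.\@ sign character of $\Delta_q/\Gamma_q\cong\mathbb Z/2$, and the spectral interpretation of their zeros becomes a standard application of Selberg theory. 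The factorization $Z(s)=\det(1-\mc L_{G,s}^+)\det(1-\mc L_{G,s}^-)$ is the starting point, and the task is to read off the correct spectral interpretation of each factor.

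First, I would refine the symbolic discretization of \cite{Pohl_Symdyn2d} to a cross-section for the billiard flow on $\Delta_q\backslash\h$ and check that the transfer operator splits, according to the two characters of $\Delta_q/\Gamma_q$, into exactly the algebraic $\pm$-decomposition of $\mc L_{G,s}$ coming from commutation with $\tau_s(Q)$. This is the main new input relative to \cite{Moeller_Pohl}. Second, by a Mayer-type trace-class argument, I would compute $\tr\bigl((\mc L_{G,s}^\pm)^n\bigr)$ as a sum over periodic billiard orbits weighted by the corresponding character, and match the resulting Fredholm determinant with the twisted Selberg zeta function
\[
\det(1-\mc L_{G,s}^\pm) = Z_{\Delta_q,\pm}(s).
\]
The product $Z_{\Delta_q,+}(s)\cdot Z_{\Delta_q,-}(s)=Z(s)$ is then the standard factorization for the double cover $\Gamma_q\hookrightarrow\Delta_q$, and matches the known factorization of the Selberg zeta function recalled in the introduction.

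Third, I would invoke the Selberg trace formula for $\Delta_q$ with each character separately: for $\Rea s>0$, the nontopological zeros of $Z_{\Delta_q,+}$ (resp.\@ $Z_{\Delta_q,-}$) correspond exactly to parameters of the $Q$-even (resp.\@ $Q$-odd) part of the Laplace spectrum on $\Gamma_q\backslash\h$. For Hecke triangle groups the odd spectrum is purely cuspidal, which gives the precise statement about odd Maass cusp forms, whereas the even spectrum is formulated as such since it may include non-cuspidal (residual or scattering) contributions; the lower bound on the first eigenvalue from \cite{Hejhal2} excludes interference with possible real topological zeros in the relevant half-plane. The main technical obstacle will be the first step: rigorously turning the algebraic $\pm$-splitting of $\mc L_{G,s}$ into genuine billiard-flow transfer operators with nuclear trace properties on the Banach space of holomorphic functions from \cite{Moeller_Pohl}, and controlling the infinitely many parabolic reflections (cuspidal orbits) that are responsible for the infinite-term structure of the accelerated discretization.
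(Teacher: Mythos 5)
Your overall strategy is the one the paper follows: realize $\mc L_{G,s}^\pm$ as transfer operators for the billiard flow on $\wt\Gamma_q\backslash\h$, compute $\Tr\bigl((\mc L_{G,s}^\pm)^n\bigr)$ as a character-weighted sum over periodic billiard orbits (equivalently, over hyperbolic $\wt\Gamma_q$-conjugacy classes in normal form), identify the Fredholm determinants with Selberg-type zeta functions for the Dirichlet/Neumann problems, and conclude via Venkov's spectral characterization of their zeros. Two points in your plan need correction, however. First, you locate the main technical obstacle in the wrong place: the nuclearity and meromorphic continuation of $\mc L^\pm_{G,s}$ on $B(\mc D_{q-1})\times B(\mc D_r)$, including the treatment of the infinitely many parabolic terms, is already established in \cite{Moeller_Pohl} (Theorem~\ref{MPtwist}) and can simply be quoted. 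The genuinely hard part, which your plan does not anticipate, is the case of even $q$: there the point $1$ is fixed by the hyperbolic element $g_{q/2}$, whose axis is a \emph{boundary} periodic geodesic of the fundamental domain of $\wt\Gamma_q$. The two distinct primitive classes $[g_{q/2}]_{\wt\Gamma_q}$ and $[Qg_{q/2}]_{\wt\Gamma_q}$ then project to the \emph{same} periodic billiard, the cross section acquires double-labelled vectors, and the induced discrete dynamical system is a relation rather than a map. Your proposed orbit sum is not well defined until this multiple coding is resolved; the paper does so by symmetrizing over choices of representatives, which forces weights $\tfrac12$ on the letters $g_m,Qg_m$ in the transfer operator and a corresponding factor $2^{-k(a)}$ in the traces.

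Second, the identity $\det(1-\mc L^\pm_{G,s})=Z_{\Delta_q,\pm}(s)$ as you state it is not literally what one gets. The Fredholm determinant equals a dynamically defined zeta function $Z_\pm(s)=\prod_{[g]}\prod_k(1-\det g^{k(+1)}N(g)^{-(s+k)})$ over primitive hyperbolic $\wt\Gamma_q$-classes, and the spectrally meaningful (Venkov) zeta functions $Z^V_\pm$ satisfy $Z^V_\pm=Z_\pm^4$ for odd $q$ but $Z^V_\pm=Z_\pm^4\,Z^c_\pm$ for even $q$, with a nontrivial correction factor $Z^c_\pm$ coming from the boundary geodesic. One must check that $Z^c_\pm$ is nonvanishing on $\Rea s>0$ (it is, since $N(g_m^2)>1$) before the zero sets can be transferred; on $\Rea s\le 0$ the factorizations genuinely differ. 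So your third step (``standard application of Selberg theory'') requires this explicit comparison of the two zeta functions, not merely an invocation of the trace formula with characters.
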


For each Hecke triangle group $\Gamma_q$ there exists a unique triangle group $\wt\Gamma_q$ in $\PGL_2(\R)$, namely $\wt\Gamma_q = \langle \Gamma_q, Q\rangle$, in which $\Gamma_q$ has index $2$. For the proof of Theorem~\ref{mainintro} we will develop a transfer operator approach (thermodynamic formalism) to Selberg-type zeta functions for the (geodesic) billiard flow on $\wt\Gamma_q\backslash\h$. As a by-product, we reprove Efrat's result with dynamical methods. Moreover, we achieve a new formulation of the Phillips-Sarnak conjecture saying that for nonarithmetic Hecke triangle groups, the transfer operators $\mc L_{G,s}^+$  with $\Rea s = \tfrac12$ should not have $1$-eigenfunctions in a certain Banach space. 

In Sections~\ref{prelims} and \ref{sec_known} we recall the necessary background knowledge on Hecke triangle groups, the Selberg zeta function and Maass cusp forms as well as the results from the transfer operator approaches in \cite{Moeller_Pohl}. In Section~\ref{sec_strategy} we briefly present the strategy of the proof of Theorem~\ref{mainintro}, which we perform in Section~\ref{odddone} for odd $q$ and in Section~\ref{evendone} for even $q$. We conclude with a few remarks in Section~\ref{conclusion}.

Throughout we use $\N = \{1,2,3,\ldots\}$ and $\N_0 = \N \cup \{0\}$.

\section{Preliminaries on Hecke triangle groups, the Selberg zeta function and Maass cusp forms}\label{prelims}

For $q\in\N$, $q\geq 3$, the \textit{Hecke triangle group} $\Gamma_q$ is the Fuchsian lattice in $\PSL_2(\R)$ which is generated by the two elements
\[
 S\sceq \bmat{0}{1}{-1}{0} \quad\text{and}\quad T_q\sceq \bmat{1}{\lambda_q}{0}{1}
\]
in $\PSL_2(\R)$, where $\lambda_q \sceq 2\cos\frac{\pi}{q}$. As well-known, $\Gamma_q$ acts on the upper half-plane model 
\[
 \h \sceq \{z\in\C \mid \Ima z>0\}
\]
of the hyperbolic plane by fractional linear transformations:
\[
 \bmat{a}{b}{c}{d}.z  = \frac{az+b}{cz+d}.
\]
A fundamental domain for $\Gamma_q$ is given by e.g.\@ the Ford fundamental domain
\[
 \mc F_q \sceq \left\{ z\in\h \left\vert\ |\Rea z|<\frac{\lambda_q}2,\ |z|>1\right.\right\},
\]
see Figure~\ref{Forddom}. The orbifold $\Gamma_q\backslash\h$ has one cusp, represented by $\infty$, and two elliptic points, represented by $i$ and 
\[
 \varrho_q \sceq \frac{\lambda_q + i\sqrt{4-\lambda_q^2}}2.
\]
\begin{figure}[h]
\begin{center}
\includegraphics*{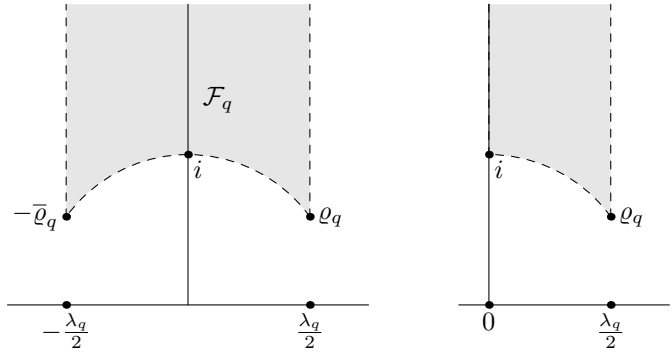} 
\end{center}
\caption{On the left side: a fundamental domain for $\Gamma_q$ in $\h$; on the right side: a fundamental domain for $\wt\Gamma_q$ in $\h$}\label{Forddom}
\end{figure}

The order of $i$ is $2$ and its stabilizer group is $\{\id, S\}$. The order of $\varrho_q$ is $q$ with stabilizer group $\{ \id, U_q^1, U_q^2, \ldots, U_q^{q-1}\}$, where 
\[
 U_q \sceq T_qS = \bmat{\lambda_q}{-1}{1}{0}.
\]
For $q=3$, the Hecke triangle group $\Gamma_3$ is the well-investigated modular group $\PSL_2(\Z)$, and $\Gamma_3\backslash\h$ is the modular surface. The lattices $\Gamma_3,\Gamma_4$ and $\Gamma_6$ are arithmetic, all other $\Gamma_q$ are nonarithmetic.

A \textit{Maass cusp form} for $\Gamma_q$ is a $C^\infty$-function $u\colon \h \to \C$ which 
\begin{itemize}
\item is an eigenfunction of the hyperbolic Laplace-Beltrami operator 
\[
 \Delta = -y^2\left(\frac{\partial^2}{\partial x^2} + \frac{\partial^2}{\partial y^2}\right),
\]
\item is constant on $\Gamma_q$-orbits, that is for all $g\in\Gamma_q$ and $z\in\h$ we have $u(g.z) = u(z)$, and factors to an element of $L^2(\Gamma_q\backslash\h)$, and
\item for which we have
\[
 \int_0^{\lambda_q} u(x+iy) dx = 0
\]
for Lebesgue-almost all $y>0$.
\end{itemize}
Because for Hecke triangle groups the residual spectrum is trivial, the Hilbert space $L^2(\Gamma_q\backslash\h)$ decomposes as
\[
 L^2(\Gamma_q\backslash\h) = \C \oplus L^2_{\text{cusp}}(\Gamma_q\backslash\h) \oplus L^2_{\text{cont}}(\Gamma_q\backslash\h),
\]
where the cuspidal part $L^2_{\text{cusp}}(\Gamma_q\backslash\h)$ is spanned by the Maass cusp forms, and the continuous part $L^2_{\text{cont}}(\Gamma_q\backslash\h)$ is determined by Eisenstein series.

A function $f\in L^2(\Gamma_q\backslash\h)$ is called \textit{even} if $f(z) = f(-\overline{z})$, and it is called \textit{odd} if $f(z) = -f(-\overline{z})$. The isometry $z\mapsto -\overline{z}$ commutes with the Laplace-Beltrami operator $\Delta$. We have the decomposition
\[
 L^2(\Gamma_q\backslash\h) = L^2_{\text{even}}(\Gamma_q\backslash\h) \oplus L^2_{\text{odd}}(\Gamma_q\backslash\h).
\]
The restriction of $\Delta$ to $L^2_{\text{odd}}(\Gamma_q\backslash\h)$ is of purely discrete spectrum, and the continuous spectrum of $\Delta$ belongs to $L^2_{\text{even}}(\Gamma_q\backslash\h)$.

Selberg theory (see e.g.\@ \cite{Venkov_book}) shows that various spectral information is encoded in the \textit{Selberg zeta function} 
\[
 Z(s) = \prod_{\wh\gamma}\prod_{k=0}^\infty\left( 1-e^{-(s+k)\ell(\wh\gamma)}\right)
\]
for $\Gamma_q$. Here, the outer product runs over all primitive periodic geodesics $\wh\gamma$ on $\Gamma_q\backslash\h$, and $\ell(\wh\gamma)$ denotes the length of $\wh\gamma$. The zeta function $Z$ converges absolutely for $\Rea s>1$ and extends meromorphically to all of $\C$. We denote its meromorphic extension also by $Z$.

We define
\[
 Q\sceq \bmat{0}{1}{1}{0} \quad\in\PGL_2(\R),
\]
which is identified with the orientation-reversing Riemannian isometry $z\mapsto 1/\overline{z}$. Let $\wt\Gamma_q$ denote the triangle group which is given by the extension of $\Gamma_q$ with $Q$. A fundamental domain for $\wt\Gamma_q$ in $\h$ is indicated in Figure~\ref{Forddom}.

In \cite{Venkov_book}, Venkov shows that the restriction of $\Delta$ to $L^2_{\text{odd}}(\Gamma_q\backslash\h)$ is isomorphic to the Laplace operator of the Dirichlet boundary value problem on $\wt\Gamma_q\backslash\h$ with vanishing boundary values, as well as that the restriction of $\Delta$ to $L^2_{\text{even}}(\Gamma_q\backslash\h)$ is isomorphic to the Neumann boundary value problem on $\wt\Gamma_q\backslash\h$ with vanishing exterior normal derivative. Modifying the kernel of the Selberg trace formula accordingly, he derives a Selberg-type zeta function $Z^V_-$ for the Dirichlet boundary value problem, and a Selberg-type zeta function $Z^V_+$ for the Neumann boundary value problem. He provides an explicit expression for $Z^V_-$ in \cite[(6.6.2)]{Venkov_book}, and the expression for $Z^V_+$ is then easily derived by $Z^V_+ \sceq Z^4/Z^V_-$ (cf.\@ Theorem~6.6.3 and the remark after Theorem~6.5.5 in \cite{Venkov_book}). The expressions for $Z^V_\pm$ depend on whether $q$ is even or odd, for which reason 
we only state them in Sections~\ref{Venkov_odd} and \ref{Venkov_even} below. These zeta functions converge absolutely for $\Rea s >1$ and extend to meromorphic functions on all of $\C$. We also use $Z^V_\pm$ to denote their meromorphic continuations. 
Venkov characterizes their zeros and poles in $\C$. They have trivial zeros at $s=-k$, $k\in\N_0$. A nontrivial zero of $Z^V_+$ we call a parameter of the even spectrum. For the nontrivial zeros, we state Venkov's result mainly restricted to the cuspidal part of the spectrum. We note that for Hecke triangle groups it is known that the first eigenvalue is strictly larger than $\tfrac14$ (\cite[p.\@ 583(8)]{Hejhal2}).

\begin{thm}[Theorems~5.1.3, 5.1.4, 6.6.3 in \cite{Venkov_book}]\label{Venkov}
Let $s\in\C$, $\Rea s > 0$. 
\begin{enumerate}[{\rm (i)}]
\item Then $s$ is a zero of $Z^V_-$ if and only if $s(1-s)$ is an eigenvalue of an odd Maass cusp form. The order of $s$ as a zero equals four times the dimension of the space of odd Maass cusp forms with eigenvalue $s(1-s)$.
\item If $\Rea s = \tfrac12$, then $s$ is a zero of $Z^V_+$ if and only if $s(1-s)$ is an eigenvalue of an even Maass cusp form. The order of $s$ as a zero equals four times the dimension of the space of even Maass cusp forms with eigenvalue $s(1-s)$.
\item If $\Rea s \not= \tfrac12$, then $s$ is a zero of $Z^V_+$ (of order $4n$) if and only if $s$ is a zero of $Z$ (of order $n$). These zeros do not correspond to eigenvalues of Maass cusp forms.
\end{enumerate}
\end{thm}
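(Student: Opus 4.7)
The plan is to follow Venkov's approach via the Selberg trace formula applied to boundary value problems on the triangle quotient $\wt\Gamma_q\backslash\h$, which carries a reflecting boundary coming from the fixed-point set of the involution $z\mapsto -\overline{z}$. The starting point is already stated in the excerpt: the restriction of $\Delta$ to $L^2_{\text{odd}}(\Gamma_q\backslash\h)$ is unitarily equivalent to the Dirichlet Laplacian on $\wt\Gamma_q\backslash\h$, and the restriction to $L^2_{\text{even}}(\Gamma_q\backslash\h)$ is equivalent to the Neumann Laplacian. Thus odd (resp.\@ even) Maass cusp forms correspond to $L^2$ Dirichlet (resp.\@ Neumann) eigenfunctions.

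Next, I would write out the Selberg trace formula for each boundary value problem. For an admissible even test function $h$ with Fourier transform $\wh h$, the spectral side is a sum over Laplace eigenvalues (plus an Eisenstein/scattering contribution). The geometric side decomposes into the identity term, contributions from hyperbolic conjugacy classes in $\wt\Gamma_q$, elliptic contributions, parabolic (cusp) contributions, and boundary terms. The hyperbolic classes split into two families: those lying in $\Gamma_q$, giving ordinary closed geodesics, and those represented by an orientation-reversing element, giving ``reflecting'' closed billiard trajectories on $\wt\Gamma_q\backslash\h$. The crucial point is that the reflecting classes contribute with sign $-1$ in the Dirichlet trace formula and sign $+1$ in the Neumann one, mirroring the boundary behavior of the eigenfunctions.

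The next step is to define $Z^V_\pm(s)$ as Euler products over primitive periodic orbits of the billiard flow, with the signs chosen exactly so that $\tfrac{d}{ds}\log Z^V_\pm(s)$ matches the hyperbolic part of the geometric side in the corresponding trace formula. Standard Paley--Wiener and contour-shift arguments, applied with a test function of the form $h_s(r)=\bigl((s-\tfrac12)^2+r^2\bigr)^{-1}$ (or a compactly supported approximation), then equate the hyperbolic geometric side with a spectral sum coming from the logarithmic derivative of $Z^V_\pm$. Comparing divisors yields (i), and then (ii) by restricting to $\Rea s=\tfrac12$ where the continuous spectrum of $\Delta$ on $L^2_{\text{even}}(\Gamma_q\backslash\h)$ becomes relevant; for (iii) one compares $Z$ for $\Gamma_q$ with the product $Z^V_+\cdot Z^V_-$, the discrepancy being absorbed by the identity, elliptic, parabolic and boundary terms, which are meromorphic and whose divisor one can write down explicitly.

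The main obstacle will be the bookkeeping on the geometric side: enumerating orientation-reversing conjugacy classes and matching them with primitive reflecting geodesics, handling the elliptic corners of the triangle orbifold (which differ in nature for even and odd $q$, hence the split into Sections~\ref{Venkov_odd} and \ref{Venkov_even}), treating the cusp together with its reflecting counterpart, and extracting the correct multiplicities. In particular, the factor~$4$ in (i)--(iii) arises from the combined contributions of the Dirichlet and Neumann problems, the index-$2$ inclusion $\Gamma_q\subset\wt\Gamma_q$, and Venkov's chosen normalization of $Z^V_\pm$; verifying this factor together with the structure of the trivial zeros at $s=-k$ (coming from the gamma factors in the scattering determinant and in the cusp term) is the most delicate part of the argument.
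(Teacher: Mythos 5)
First, a structural point: the paper does not prove this statement at all --- Theorem~\ref{Venkov} is imported from Venkov's book \cite{Venkov_book}, so there is no internal argument to compare yours against. Your outline is indeed the route Venkov takes: the unitary equivalence of the odd/even parts of $\Delta$ with the Dirichlet/Neumann Laplacians on $\wt\Gamma_q\backslash\h$, a trace formula for each boundary value problem in which the orientation-reversing hyperbolic classes enter with opposite signs, and zeta functions $Z^V_\pm$ built so that their logarithmic derivatives reproduce the hyperbolic terms. As a plan it points in the right direction.

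As written, however, it is a programme rather than a proof, and the two places you wave at are exactly where the content lies. For (ii) and (iii), the dichotomy between $\Rea s=\tfrac12$ and $\Rea s\neq\tfrac12$ is not a matter of ``restricting'': the Dirichlet problem has purely discrete spectrum, so all nontrivial zeros of $Z^V_-$ in $\Rea s>0$ lie on the critical line and count odd cusp forms, whereas the Neumann trace formula carries the Eisenstein/scattering contribution, whose determinant produces the off-line zeros of $Z^V_+$; these are resonances, not eigenvalues, which is precisely the assertion of (iii). A clean way to obtain (iii), which your sketch gestures at but does not carry out, is to observe that $Z^V_+(s)\,Z^V_-(s)=Z(s)^4$ (immediate from Venkov's definitions, since the reflecting factors cancel) and combine this with the absence of off-line zeros of $Z^V_-$ from (i). Second, your explanation of the factor $4$ is off: it cannot come from ``combining the Dirichlet and Neumann contributions'', since each part of the theorem concerns a single boundary value problem. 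It comes from Venkov's normalization of $Z^V_\pm$ --- the squared Euler factors over $[\Gamma_q]_p$ together with the way $\Gamma_q$-conjugacy classes split or merge under the index-$2$ extension to $\wt\Gamma_q$. This is exactly what the paper's Lemma~\ref{zetasident} and Proposition~\ref{zetaseven} make precise by showing $Z^V_\pm=Z_\pm^4$ (times the boundary-geodesic factor $Z^c_\pm$ for even $q$), where $Z_\pm$ is the naturally normalized dynamical zeta function whose zero order equals the eigenspace dimension. Pinning down these two points is the actual work; the rest of your outline is standard.
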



\section{The geodesic flow, transfer operators, and Maass cusp forms}\label{sec_known}

We recall the symbolic dynamics from \cite{Pohl_Symdyn2d,Moeller_Pohl} for the geodesic flow on $\Gamma_q\backslash\h$ and the results from the transfer operator approaches to Maass cusp forms and the Selberg zeta function from \cite{Moeller_Pohl}. These transfer operator approaches are based on discrete dynamical systems which arise from specific cross sections for the geodesic flow on $\Gamma_q\backslash\h$. 

We denote by $S\h$ the unit tangent bundle of $\h$, and identify the unit tangent bundle of $\Gamma_q\backslash\h$ with $\Gamma_q\backslash S\h$. For any unit tangent vector $\wh v \in \Gamma_q\backslash S\h$ we let $\wh \gamma_{\wh v}$ denote the geodesic on $\Gamma_q\backslash\h$ determined by
\[
 \wh\gamma_{\wh v}'(0) = \wh v.
\]
We say that $\wh\gamma_{\wh v}$ intersects a given subset of $\Gamma_q\backslash S\h$ if for some $t\in\R$, the vector $\wh\gamma_{\wh v}'(t)$ is contained in this subset. A \textit{cross section} for the geodesic flow on $\Gamma_q\backslash\h$ refers here to a subset $\wh C$ of $\Gamma_q\backslash S\h$ which is intersected by each \textit{periodic} geodesic on $\Gamma_q\backslash\h$ and for which each intersection between a geodesic and $\wh C$ is discrete in time. For all cross sections we consider here, the \textit{first return map} $R\colon \wh C \to \wh C$,
\[
 R(\wh v) \sceq \wh\gamma_{\wh v}'(t_0),
\]
where 
\[
 t_0 \sceq \min\left\{ t>0 \left\vert\ \wh\gamma_{\wh v}'(t) \in \wh C\right.\right\}
\]
is the \textit{first return time}, is well-defined. 

For the cross sections from \cite{Pohl_Symdyn2d, Moeller_Pohl} one finds discrete dynamical systems on parts of $\R$ to which the first return map is semi-conjugate. We now describe the general construction of these discrete dynamical systems. Let $\pi\colon S\h\to \Gamma_q\backslash S\h$ denote the quotient map. A \textit{set of representatives} for a cross section $\wh C$ is a subset $C'$ of $S\h$ such that $\pi\vert_{C'}$ provides a bijection between $C'$ and $\wh C$. For $v\in S\h$ we let $\gamma_v$ denote the geodesic on $\h$ determined by $\gamma'_v(0) = v$. For each cross section $\wh C$ from \cite{Pohl_Symdyn2d, Moeller_Pohl} there exists a set of representatives $C'$ such that the map
\begin{equation}\label{tau}
 \tau\colon \wh C \to \R,\quad \wh v \mapsto \gamma_v(\infty),
\end{equation}
where $v\sceq \big(\pi\vert_{C'}\big)^{-1}(\wh v)$, is injective. We remark that $\tau$ depends on the choice of $C'$. Moreover,  the map $\tau$ can be turned into an intertwiner, but the information from the past of the geodesics will not be needed.

A point in the geodesic boundary $\overline \h \cong P^1(\R) \cong \R \cup \{\infty\}$ of $\h$ is called \textit{cuspidal} (for $\Gamma_q$) if it is a representative of the cusp. Here, the set of cuspidal points is $\Gamma_q.\infty$. For any subset $I$ of $\R$, we set
\[
 I_\st \sceq I \setminus \Gamma_q.\infty.
\]
The subscript ``$\st$'' stands for ``strong''. It refers to strong cross sections as opposed to weak cross sections, a distinction from \cite{Pohl_Symdyn2d}. All cross sections $\wh C$ we consider here are strong ones. Moreover, there exists an (open) interval $I$ in $\R$ such that 
\[
 \tau(\wh C) = I_\st.
\]
Then we find a map $H\colon I_\st \to I_\st$ such that $\tau\circ R = H \circ\tau$, that is, $(\wh C, R)$ is semi-conjugate to $(I_\st, H)$. The map $H$ decomposes into maps of the form
\begin{equation}\label{submap}
 I_{g,\st} \to H(I_{g,\st}),\quad x\mapsto g.x,
\end{equation}
for some $g\in \Gamma_q$ and $I_g$ a subinterval of $I$. If $I_g$ is chosen maximal, we call the map in \eqref{submap} a \textit{submap} of $H$. The map $H$ may decompose into finitely or infinitely many submaps. 

The discrete dynamical system $(I_\st, H)$ induces a family of transfer operators $\mc L_{H,s}$, $s\in \C$, by
\[
 \big( \mc L_{H,s} f\big)(x) \sceq \sum_{y\in H^{-1}(x)} \frac{f(y)}{|H'(y)|^s},
\]
where $f$ belongs to an appropriate space of functions on $I$ (more precisely on $I_\st$ and then extended to $I$). The precise space of functions depends on $H$ and applications. It will be discussed further below.

In the following subsections we recall the definitions of the two cross sections from \cite{Pohl_Symdyn2d, Moeller_Pohl} for the geodesic flow on $\Gamma_q\backslash \h$, which both give rise to discrete dynamical systems on $D_\st \sceq (0,\infty)_\st$. The so-called \textit{slow} one from \cite{Pohl_Symdyn2d} induces a discrete dynamical system, denoted by $(D_\st, F)$, which decomposes into finitely many submaps. The term ``slow'' refers to the property that the iterated action may involve consecutive applications of the same parabolic element from $\Gamma_q$. The $1$-eigenfunctions of the arising transfer operators are in bijection with Maass cusp forms. In contrast, the so-called \textit{fast} one from \cite{Moeller_Pohl} induces a discrete dynamical system, denoted by $(D_\st, G)$, which does not allow consecutive applications of the same parabolic element, but decomposes into infinitely many submaps. The arising family of transfer operators consists of nuclear operators of order $0$ on a certain 
Banach 
space. The Fredholm determinant of this family equals the Selberg zeta function. The cross sections, discrete 
dynamical systems and transfer operators depend on the specific Hecke triangle group $\Gamma_q$. However, to avoid overly decorated symbols, we omit references to $\Gamma_q$ from the notation. Recall that 
\[
 S = \bmat{0}{1}{-1}{0},\quad T = \bmat{1}{\lambda}{0}{1}\quad\text{and}\quad  U = TS = \bmat{\lambda}{-1}{1}{0}
\]
with $\lambda\sceq \lambda_q = 2\cos \tfrac{\pi}{q}$. For $k\in \Z$ we define
\[
 g_k \sceq \big( U^k S \big)^{-1}.
\]
Recall that $U^q = \id$. For $s\in\C$, $g=\textbmat{a}{b}{c}{d}\in \PGL_2(\R)$ and $t\in\R$, we set
\[
 j_s(g,t) \sceq \big( (ct+d)^{-2} \big)^s.
\]
If, in addition, $f\colon V\to \R$ is a function on some subset $V$ of $\R$ (or $P^1(\R)$), then we define 
\[
 \tau_s(g^{-1})f(t) \sceq j_s(g,t) f(g.t)
\]
whenever this makes sense. Recall that $Q= \textbmat{0}{1}{1}{0}$.

\subsection{Slow discrete dynamical system, transfer operators, and Maass cusp forms}\label{sec_MPslow}

The slow discrete dynamical system $(D_\st, F)$ from \cite{Pohl_Symdyn2d} is induced by the cross section $\wh C_F$ with
\[
 C'_F = \{ v\in S\h \mid\text{$v$ is based on $i\R_{>0}$, $\gamma_v(\infty) \in (0,\infty)_\st$}\}
\]
as set of representatives. The set $C'_F$ consists of the unit tangent vectors with base point on the imaginary axis $i\R_{>0}$ which point to the right, and such that the geodesics determined by these vectors do not converge to a cuspidal point. The map $F\colon D_\st \to D_\st$ is given by the submaps
\[
 (g_k^{-1}.0, g_k^{-1}.\infty)_\st \to D_\st,\quad x \mapsto g_k.x,
\]
for $k=1,\ldots, q-1$. The associated family of transfer operators
\[
 \mc L_{F,s} = \sum_{k=1}^{q-1} \tau_s(g_k)
\]
is defined on the space of complex-valued functions on $(0,\infty)$. 

For $s\in\C$ let $C^\omega_{\dec,s}(\R_{>0};\C)$ denote the space of real-analytic functions $f\colon\R_{>0}\to\C$ such that the map
\[
x\mapsto 
\begin{cases}
f(x) & \text{for $x\in \R_{>0}$,}
\\
-\tau_s(S)f(x) & \text{for $x\in \R_{<0}$}
\end{cases}
\]
extends smoothly to $\R$. Let 
\[
 \FE_s(\R_{>0})^\dec_\omega \sceq \{ f\in C^\omega_{\dec,s}(\R_{>0};\C) \mid f=\mc L_{F,s}f\}
\]
denote the subspace of $1$-eigenfunctions of $\mc L_{F,s}$, and let 
\begin{align*}
\FE_s(\R_{>0})^{\dec,+}_{\omega} &\sceq \{ f\in \FE_s(\R_{>0})^\dec_\omega \mid \tau_s(Q)f = f\}
\intertext{respectively}
\FE_s(\R_{>0})^{\dec,-}_{\omega} &\sceq \{ f\in \FE_s(\R_{>0})^\dec_\omega \mid -\tau_s(Q)f = f\}
\end{align*}
denote the subspace of $\tau_s(Q)$-invariant resp.\@ $\tau_s(Q)$-anti-invariant functions in the space $\FE_s(\R_{>0})^\dec_\omega$.

\begin{thm}[\cite{Moeller_Pohl}]\label{periodfunctions}
Let $s\in\C$, $\Rea s \in (0,1)$. Then the space $\FE_s(\R_{>0})^{\dec}_\omega$ is isomorphic as a vector space to the space of Maass cusp forms for $\Gamma_q$ with eigenvalue $s(1-s)$. More precisely, $\FE_s(\R_{>0})^\dec_\omega$ decomposes into the direct sum
\[
 \FE_s(\R_{>0})^\dec_\omega = \FE_s(\R_{>0})^{\dec,+}_\omega \oplus \FE_s(\R_{>0})^{\dec,-}_\omega,
\]
and $\FE_s(\R_{>0})^{\dec,+}$ resp.\@ $\FE_s(\R_{>0})^{\dec,-}_\omega$ is isomorphic to the space of even resp.\@ odd Maass cusp forms for $\Gamma_q$ with eigenvalue $s(1-s)$.
\end{thm}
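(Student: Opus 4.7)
The plan is to construct an explicit linear isomorphism between the space of Maass cusp forms of eigenvalue $s(1-s)$ and $\FE_s(\R_{>0})^\dec_\omega$ that is moreover equivariant with respect to the involutions $u\mapsto u(-\overline{\cdot})$ on the automorphic side and $\tau_s(Q)$ on the transfer-operator side. Granted such an intertwiner, the direct sum decomposition into $\tau_s(Q)$-eigenspaces is immediate from the splitting of $L^2_\cusp(\Gamma_q\backslash\h)$ into even and odd parts, so all that remains at the level of the decomposition is to check that $\tau_s(Q)$ genuinely acts as an involution on $\FE_s(\R_{>0})^\dec_\omega$. This should be a direct calculation: the relation $QSQ=S$ in $\PGL_2(\R)$ ensures that the extension rule via $-\tau_s(S)$ across $0$ is $Q$-equivariant, while conjugation by $Q$ permutes $\{g_1,\ldots,g_{q-1}\}$ in a way compatible with the cocycle $j_s$, giving $\tau_s(Q)\mc L_{F,s}=\mc L_{F,s}\tau_s(Q)$.

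For the forward direction (Maass cusp form to eigenfunction of $\mc L_{F,s}$), I would use a Bruggeman--Lewis--Zagier style integral transform. Given a Maass cusp form $u$ of eigenvalue $s(1-s)$, set
\[
 f(x)\ :=\ \int_{i\R_{>0}} \bigl[\, u(z),\, R_s(z,x)\,\bigr],
\]
where $R_s(\cdot,x)$ is an appropriate $\Delta$-eigenkernel of spectral parameter $s$ (an incarnation of the Poisson or Green's kernel) and $[\cdot,\cdot]$ denotes the closed $\Delta$-balanced Green's one-form familiar from the parabolic cohomology framework. Convergence comes from cuspidality of $u$ and the uniform decay of $R_s$; real-analyticity in $x$ is clear from the kernel. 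The $\Gamma_q$-invariance of $u$, unpacked against the decomposition of the first return map of the cross section $\wh C_F$ into the submaps $x\mapsto g_k.x$, translates (via transformation of the path of integration under each $g_k$ together with the cocycle identity for $j_s$) into the identity $f=\mc L_{F,s}f$. Smoothness of $u$ across the boundary point $0\in\partial\h$ together with the $\Delta$-balanced nature of $[\cdot,\cdot]$ gives the smooth extension of $f$ across $0$ via $-\tau_s(S)f$, so that indeed $f\in\FE_s(\R_{>0})^\dec_\omega$.

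For the reverse direction, starting from $f\in\FE_s(\R_{>0})^\dec_\omega$ I would extend $f$ to a smooth function $\widetilde f$ on $\R$ via the prescription defining $C^\omega_{\dec,s}(\R_{>0};\C)$. The combined data of $f=\mc L_{F,s}f$ on $\R_{>0}$ and of the $S$-compatibility encoded in the smooth extension across $0$ supply exactly the relations needed to define a parabolic $1$-cocycle on $\Gamma_q$ with values in an appropriate principal-series module, since $S$ and $g_1,\ldots,g_{q-1}$ together generate $\Gamma_q$. The Poisson transform of a suitable boundary representative of this cocycle then produces a $\Gamma_q$-invariant $\Delta$-eigenfunction $u$ on $\h$; the decay of $f$ near $0$ and $\infty$ translates into cuspidality of $u$. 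That the two constructions are mutually inverse is a bookkeeping exercise once all normalizations are fixed, and $Q$-equivariance follows from the $Q$-equivariance of $R_s$. \textbf{The main obstacle} is precisely this reverse direction: because the slow system decomposes into only $q-1$ submaps, the eigenfunction equation $f=\mc L_{F,s}f$ alone encodes only a small fragment of full $\Gamma_q$-equivariance, and it requires careful analysis to show that the extra datum of smooth extension across $0$ (i.e.\ $S$-compatibility) is sufficient to promote $f$ to a genuine parabolic cocycle whose Poisson transform lands in $L^2_\cusp(\Gamma_q\backslash\h)$ rather than in some larger automorphic space.
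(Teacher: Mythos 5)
The paper does not actually prove this statement: Theorem~\ref{periodfunctions} is imported verbatim from \cite{Moeller_Pohl}, so there is no in-paper argument to compare yours against. Measured against the strategy of that reference, your outline is pointed in the right direction -- the correspondence is indeed realized by a Bruggeman--Lewis--Zagier type integral transform attached to the cross section $\wh C_F$, and the even/odd refinement does come from the commutation $\tau_s(Q)\mc L_{F,s}=\mc L_{F,s}\tau_s(Q)$ together with the fact that on $\Gamma_q$-invariant functions the involution $z\mapsto 1/\overline z$ agrees with $z\mapsto-\overline z$ (they differ by $S\in\Gamma_q$; you should say this explicitly, since you intertwine with $u\mapsto u(-\overline{\,\cdot\,})$ on one side and with $\tau_s(Q)$ on the other).

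However, as a proof the proposal has a genuine gap, and it is the one you yourself flag without closing. In the reverse direction you must show two things that do not follow from bookkeeping: (i) that the single relation $f=\sum_{k=1}^{q-1}\tau_s(g_k)f$ together with the $-\tau_s(S)$-extension across $0$ generates a full parabolic $1$-cocycle on $\Gamma_q$ -- this requires working with an explicit presentation of $\Gamma_q$ in terms of $S$ and $T_q$ (relations $S^2=\id$, $(T_qS)^q=\id$) and verifying that the cocycle relations forced by $f=\mc L_{F,s}f$ are exactly the ones needed, which is a nontrivial combinatorial/algebraic step, not a formality; and (ii) that the resulting $\Gamma_q$-invariant eigenfunction is \emph{cuspidal} and lies in $L^2(\Gamma_q\backslash\h)$. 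Point (ii) is the analytic heart of the Lewis--Zagier theory: membership in $C^\omega_{\dec,s}(\R_{>0};\C)$ prescribes smooth extension across $0$ but says nothing a priori about the asymptotics of $f$ at $0$ and $\infty$ that would guarantee vanishing of the zeroth Fourier coefficient of $u$ at the cusp; one has to derive the required asymptotic expansions from the functional equation itself (a bootstrap using the parabolic generators $g_1,g_{q-1}$). Asserting that ``the decay of $f$ near $0$ and $\infty$ translates into cuspidality'' presupposes decay that has not been established. Until those two steps are carried out, the proposal is a plan consistent with the known proof rather than a proof.
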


The identity $f=\mc L_{F,s}f$ in the definition of $\FE_s(\R_{>0})^\dec_\omega$ is equal to the functional equation 
\[
 f = \sum_{k=1}^{q-1} \tau_s(g_k)f.
\]
Moreover, the isomorphism in Theorem~\ref{periodfunctions} is given by an integral transform. For these reasons we call the elements in $\FE_s(\R_{>0})^\dec_\omega$ \textit{period functions} for $\Gamma_q$. Accordingly, the elements of $\FE_s(\R_>)^{\dec,+}_\omega$ (resp.\@ of $\FE_s(\R_{>0})^{\dec,-}_\omega$) are called \textit{even} (resp.\@ \textit{odd}) period functions. As shown in \cite{Moeller_Pohl}, also even and odd period functions for $\Gamma_q$ can be characterized as $C^\omega_{\dec, s}$-functions on $\R_{>0}$ satisfying a single functional equation. For that let
\[
 m\sceq \left\lfloor \frac{q+1}{2} \right\rfloor.
\]
Then we have
\[
 \FE_s(\R_{>0})^{\dec,\pm}_\omega = \Big\{ f\in C^\omega_{\dec,s}(\R_{>0};\C) \ \Big\vert\ f= \sum_{k=m}^{q-1} \tau_s(g_k)f \pm \tau_s(Qg_k)f \Big\}
\]
for $q$ odd, and
\begin{align*}
 \FE_s(\R_{>0})^{\dec,\pm}_\omega = \Big\{ &f\in C^\omega_{\dec,s}(\R_{>0};\C) \ \Big\vert\ 
\\
& f=\sum_{k=m+1}^{q-1} \tau_s(g_k)f \pm \tau_s(Qg_k)f + \frac12\tau_s(g_m)f \pm \frac12\tau_s(Qg_m)f\Big\}
\end{align*}
for $q$ even.

\subsection{Fast discrete dynamical system, transfer operators, and the Selberg zeta function}

The fast discrete dynamical system $(D_\st, G)$ from \cite{Moeller_Pohl} is an acceleration of the slow discrete dynamical system $(D_\st, F)$ on its two parabolic elements 
\[
 g_1 = \bmat{1}{-\lambda}{0}{1} \quad\text{and}\quad g_{q-1}=\bmat{1}{0}{-\lambda}{1}.
\]
Also $(D_\st, G)$ arises from a cross section, which we recall in the following. For that let $R_F$ denote the first return map associated to the cross section $\wh C_F$ inducing the system $(D_\st, F)$, and let $\tau_F$ be the map associated to $C'_F$ as in  \eqref{tau}. Define
\[
 \wh{\mc N}_G \sceq \big\{ \wh v\in \wh C_F\ \big\vert\  \tau_F(R_F^{-1}(\wh v)) \in (g_1^{-2}.0,\infty)_\st \cup (0,g_{q-1}^{-2}.\infty)_\st \big\}.
\]
These are the vectors in $\wh C_F$ which correspond to multiple consecutive applications of $g_1$ or $g_{q-1}$ in the iteration of $F$. The cross section we consider in the following is 
\[
 \wh C_G \sceq \wh C_F\setminus \wh{\mc N}_G.
\]
As set of representatives we use
\[
 C'_G \sceq C'_F \setminus \mc N_G,
\]
where $\mc N_G \sceq \pi^{-1}(\wh{\mc N}_G)$. The map $G\colon D_\st \to D_\st$ is then given by the submaps
\[
 \big(g_k^{-1}.0,g_k^{-1}.\infty\big)_\st \to D_\st,\quad x\mapsto g_k.x,
\]
for $k=2,\ldots, q-2$, and
\[
 \big(g_1^{-n}.0, g_1^{-(n+1)}.0\big)_\st \to \big(0,g_1^{-1}.0\big)_\st,\quad x\mapsto g_1^n.x,
\]
as well as
\[
 \big(g_{q-1}^{-(n+1)}.\infty, g_{q-1}^{-n}.\infty\big)_\st \to \big(g_{q-1}^{-1}.\infty,\infty\big)_\st,\quad x\mapsto g_{q-1}^n.x,
\]
for $n\in\N$. We identify any function $f\colon D_\st\to\C$ with the vector
\[
 \left( f\cdot 1_{(0,g_{q-1}^{-1}.\infty)}, f\cdot 1_{(g_{q-1}^{-1}.\infty, g_1^{-1}.0)}, f\cdot 1_{(g_1^{-1}.0,\infty)}\right).
\]
The associated family of transfer operators is then represented (at first only formally) by the matrices
\begin{equation}\label{transop}
 \mc L_{G,s} = 
\begin{pmatrix}
0 & \sum\limits_{k=2}^{q-2} \tau_s(g_k) & \sum\limits_{n\in\N} \tau_s(g_1^n)
\\
\sum\limits_{n\in\N}\tau_s(g_{q-1}^n) & \sum\limits_{k=2}^{q-2}\tau_s(g_k) & \sum\limits_{n\in\N}\tau_s(g_1^n)
\\
\sum\limits_{n\in\N}\tau_s(g_{q-1}^n) & \sum\limits_{k=2}^{q-2}\tau_s(g_k) & 0
\end{pmatrix}.
\end{equation}
To state a good domain of definition for $\mc L_{G,s}$, we fix appropriate open neighborhoods $\mc D_1, \mc D_r, \mc D_{q-1}$ in $P^1(\C)$ of the closed intervals $[g_1^{-1}.0,\infty], [g_{q-1}^{-1}.\infty, g_1^{-1}.0]$ and $[0,g_{q-1}^{-1}.\infty]$, resp.\@, as in \cite{Moeller_Pohl}, and define
\[
 B(\mc D_j) \sceq \big\{ \text{$f\colon \overline{\mc D}_j \to \C$ continuous} \ \big\vert\ \text{$f\vert_{\mc D_j}$ holomorphic}\big\}
\]
for $j\in \{1,r,q-1\}$. The neighborhoods can and shall be chosen such that $Q.\mc D_{q-1} = \mc D_1$ and $Q.\mc D_r = \mc D_r$. Endowed with the supremum norm, $B(\mc D_j)$ is a Banach space. We let
\[
 B(\mc D) \sceq B(\mc D_1) \times B(\mc D_r) \times B(\mc D_{q-1})
\]
denote the direct product of these Banach spaces. We let $\mc L_{G,s}$ act on $B(\mc D)$ via the matrix representation in \eqref{transop}, whenever the infinite sums in its definition converge. We recall that the Selberg zeta function as well as its meromorphic continuation is denoted by $Z$. The following theorem from \cite{Moeller_Pohl} shows the relation between the zeros of $Z$ and the $1$-eigenfunctions of $\mc L_{G,s}$. 

\begin{thm}[\cite{Moeller_Pohl}]\label{fastMP}
Let $s\in\C$. 
\begin{enumerate}[{\rm (i)}]
\item For $\Rea s > \tfrac12$, the transfer operator $\mc L_{G,s}$ acts on $B(\mc D)$ and defines a nuclear operator of order $0$. 
\item\label{fastMPii} For $\Rea s > 1$ we have $Z(s) = \det(1-\mc L_{G,s})$.
\item The map $s\mapsto \mc L_{G,s}$ extends to a meromorphic map on all of $\C$ with values in nuclear operators of order $0$. Possible poles (all of which are simple) are located at $s=(1-k)/2$, $k\in\N_0$.
\item By meromorphic continuation, we have $Z(s) = \det(1-\mc L_{G,s})$ on all of $\C$. Possible poles are located at $s=(1-k)/2$, $k\in\N_0$. These poles have order at most $4$.
\end{enumerate}
\end{thm}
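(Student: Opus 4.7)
The plan is to establish Theorem~\ref{fastMP} in four interlocking steps matching items (i)--(iv): first the nuclearity of each entry of the matrix $\mc L_{G,s}$, then a periodic-orbit identification of the Fredholm determinant with the Selberg zeta function, and finally an explicit meromorphic continuation of the parabolic blocks that pins down both the location and the maximal order of the poles.

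For (i), I would build on the Grothendieck--Mayer--Ruelle template for transfer operators whose branches are Möbius transformations with a strict geometric contraction property. The neighborhoods $\mc D_1, \mc D_r, \mc D_{q-1}$ are chosen so that, for each $k\in\{2,\ldots,q-2\}$, the map $x\mapsto g_k.x$ sends the closure of its domain disk strictly inside some $\mc D_{j'}$; hence the weighted composition operator $\tau_s(g_k)$ on $B(\mc D)$ is nuclear of order $0$ because its singular values decay exponentially (factor through a Bergman-type Hilbert space and use the Schwarz lemma). Finite sums of such operators inherit nuclearity. For the parabolic blocks, each individual $\tau_s(g_1^n)$ and $\tau_s(g_{q-1}^n)$ is nuclear for $n\geq 1$ since the iterates of a parabolic element also contract strictly on the disks in question. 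One then uses the explicit form $\tau_s(g_{q-1}^n)f(t) = (1+n\lambda t)^{-2s}f\!\left(t/(1+n\lambda t)\right)$ to obtain a nuclear-norm bound of order $O(n^{-2\Rea s})$, yielding absolute convergence of the series for $\Rea s>\tfrac12$; the $g_1^n$-series is handled identically after conjugation by $Q$.

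For (ii), I would use the identity $\det(1-\mc L_{G,s}) = \exp\bigl(-\sum_{n\geq 1}\tfrac{1}{n}\Tr\mc L_{G,s}^n\bigr)$, valid in the nuclear regime and for $\Rea s$ large. Because every branch of the iterate $G^n$ is again a Möbius map with a unique attracting fixed point in the interior of its domain, the Atiyah--Bott--Ruelle fixed-point formula yields
\[
\Tr\mc L_{G,s}^n \;=\; \sum_{w} \frac{j_s(h_w^{-1},x_w)}{1-h_w'(x_w)},
\]
where $w$ ranges over admissible periodic words of length $n$ in the symbolic model, $h_w$ is the corresponding composition of branches, and $x_w$ is its unique fixed point. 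Expanding $(1-h_w'(x_w))^{-1}$ as a geometric series indexed by $k\in\N_0$ and collecting by primitive orbits recovers the Euler product defining $Z(s)$. The key structural input is the bijection between admissible periodic words of the fast coding and primitive hyperbolic conjugacy classes in $\Gamma_q$ provided by the cross-section construction of \cite{Pohl_Symdyn2d, Moeller_Pohl}.

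For (iii) and (iv), the meromorphic continuation is carried entirely by the two parabolic blocks. Substituting the Taylor expansion of $f$ at the parabolic fixed point into $\tau_s(g_{q-1}^n)f$ and summing over $n$, each Taylor coefficient contributes a Hurwitz-zeta factor $\zeta(2s+j,\cdot)$ with a simple pole at a single point of $\{(1-k)/2\mid k\in\N_0\}$; the corresponding residue is a rank-one operator on $B(\mc D)$. Since each parabolic series occupies two entries of the $3\times 3$ matrix, the total rank of the residue of $\mc L_{G,s}$ at any $s=(1-k)/2$ is at most $4$, which bounds the order of the corresponding pole of $\det(1-\mc L_{G,s})$ by $4$. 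The identity $Z(s)=\det(1-\mc L_{G,s})$ then propagates to all of $\C$ by analytic continuation from $\Rea s>1$. \textbf{The main obstacle} I expect to be in step (ii): one must show that the combinatorial admissibility condition of the fast symbolic model (no consecutive $g_1$'s or consecutive $g_{q-1}$'s) corresponds exactly to reduced cyclically admissible words, and that every primitive hyperbolic conjugacy class of $\Gamma_q$ is realized by exactly one cyclic class of admissible words with the correct weight; this bookkeeping is not routine and relies essentially on the detailed Markov structure of the cross section rather than on soft dynamical arguments.
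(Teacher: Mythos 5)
This theorem is quoted from \cite{Moeller_Pohl} rather than proved in the present paper; the only proof material given here is the Section~\ref{sec_strategy} recollection of the argument for part (ii) --- the normal forms for hyperbolic $\Gamma_q$-conjugacy classes supplied by the $G$-coding, the trace formula $\Tr\tau_s(a)=N(a)^{-s}/(1-N(a)^{-1})$, and the counting identity $\log Z(s)=-\sum_{n}\tfrac1n\Tr\mc L_{G,s}^n$ --- which is exactly your step (ii), including your correct identification of the periodic-word/conjugacy-class bookkeeping as the crux. Your steps (i), (iii) and (iv) are the standard Grothendieck--Mayer arguments (nuclearity of strictly contracting composition operators, Hurwitz-zeta continuation of the two parabolic series, and a rank-at-most-$4$ polar part bounding the pole order of the determinant), consistent with everything the paper asserts, so the proposal is correct and takes essentially the same route (modulo a harmless notational slip in your fixed-point formula, where the denominator should involve the derivative of the contracting inverse branch at its fixed point, i.e.\ $1-N(a)^{-1}$, not $1-N(a)$).
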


The operator 
\[
 T_s(Q) \sceq 
\begin{pmatrix}
& & \tau_s(Q)
\\
& \tau_s(Q)
\\
\tau_s(Q) 
\end{pmatrix}
\]
commutes with $\mc L_{G,s}$ and allows, by change of basis, a block-diagonalization
\[
\mc L_{G,s} =
\begin{pmatrix}
\mc L^+_{G,s} 
\\
 & \mc R\mc L^-_{G,s}\mc R
\end{pmatrix}
\]
where, with $m = \lfloor \tfrac{q+1}{2} \rfloor$,
\[
 \mc L_{G,s}^\pm \sceq 
\begin{pmatrix}
\pm \sum\limits_{n\in\N} \tau_s(Qg_{q-1}^n) & \sum\limits_{k=m}^{q-2} \tau_s(g_k) \pm \tau_s(Qg_k)
\\
\sum\limits_{n\in\N} \tau_s(g_{q-1}^n) \pm \tau_s(Qg_{q-1}^n) & \sum\limits_{k=m}^{q-2} \tau_s(g_k)\pm \tau_s(Qg_k)
\end{pmatrix}
\]
for odd $q$, and
\[
\mc L_{G,s}^\pm \sceq
\begin{pmatrix}
\pm \sum\limits_{n\in\N}\tau_s(g_1^nQ) & \sum\limits_{k=m+1}^{q-2}\tau_s(g_k)\pm \tau_s(Qg_k) +\frac12 \tau_s(g_m) \pm \frac12\tau_s(Qg_m)
\\
\sum\limits_{n\in\N}\tau_s(g_{q-1}^n) \pm \tau_s(Qg_{q-1}^n) & \sum\limits_{k=m+1}^{q-2} \tau_s(g_k) \pm \tau_s(Qg_k) + \frac12\tau_s(g_m) \pm \frac12\tau_s(Qg_m)
\end{pmatrix}
\]
for even $q$. The operator $\mc R$ is only a self-inverse base-change. We remark that we corrected here the weights of $\tau_s(g_m)$, $\tau_s(Qg_m)$ in $\mc L^\pm_{G,s}$ compared to \cite{Moeller_Pohl}.

\begin{thm}[\cite{Moeller_Pohl}] \label{MPtwist}
Let $s\in\C$. 
\begin{enumerate}[{\rm (i)}]
\item For $\Rea s > \tfrac12$, the operators $\mc L^\pm_{G,s}$ act on $B(\mc D_{q-1})\times B(\mc D_r)$ and define nuclear operators of order $0$.
\item The maps $s\mapsto \mc L^\pm_{G,s}$ extend to meromorphic maps on all of $\C$ with values in nuclear operators of order $0$ and possible poles at $s=(1-k)/2$, $k\in\N_0$. 
\item The operator $\mc L_{G,s}$ has a $T_s(Q)$-invariant (resp.\@ $T_s(Q)$-anti-invariant) eigenfunction with eigenvalue $\lambda$ if and only if $\mc L^+_{G,s}$ (resp.\@ $\mc L^-_{G,s}$) has an eigenfunction with eigenvalue $\lambda$.
\item We have $Z(s) = \det(1-\mc L^+_{G,s}) \det(1-\mc L^-_{G,s})$.
\end{enumerate}
\end{thm}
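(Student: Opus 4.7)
The plan is to diagonalize $\mc L_{G,s}$ with respect to the involution $T_s(Q)$, read off the explicit matrix form of each block, and transfer the properties from Theorem~\ref{fastMP} to the blocks. The algebraic backbone is the conjugation relation $Qg_kQ^{-1}=g_{q-k}$ in $\PGL_2(\R)$, which I would derive from $QSQ=S$ and $QUQ=U^{-1}$ (a short direct computation using $Q^2=\id$, $S^2=\id$, $U^q=\id$); in particular $Qg_1^nQ^{-1}=g_{q-1}^n$, and for even $q$ the middle index $m=q/2$ satisfies $Qg_mQ^{-1}=g_m$. Together with $T_s(Q)^2=\id$, this makes $P_\pm:=\tfrac12(\id\pm T_s(Q))$ continuous spectral projections on $B(\mc D)$, giving a topological direct sum $B(\mc D)=V_+\oplus V_-$. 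I would then verify $[T_s(Q),\mc L_{G,s}]=0$ entrywise in \eqref{transop}: $Q$ swaps the intervals $(0,g_{q-1}^{-1}.\infty)$ and $(g_1^{-1}.0,\infty)$ and preserves $(g_{q-1}^{-1}.\infty,g_1^{-1}.0)$, matching the anti-diagonal shape of $T_s(Q)$, and the corner parabolic sums swap via $Qg_{q-1}^nQ^{-1}=g_1^n$ while the middle column sum $\sum_{k=2}^{q-2}\tau_s(g_k)$ is invariant as a set under $k\mapsto q-k$. Hence $\mc L_{G,s}$ preserves $V_\pm$.

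Next I would parametrize $V_\pm$ by $B(\mc D_{q-1})\times B(\mc D_r)$. Using $\tau_s(Q)$ to identify $B(\mc D_1)\cong B(\mc D_{q-1})$, an element of $V_\pm$ takes the form $(f_{q-1},f_r,\pm f_{q-1})$ with $\tau_s(Q)f_r=\pm f_r$. The key observation is that the matrix formula for $\mc L^\pm_{G,s}$ in the statement makes sense on all of $B(\mc D_{q-1})\times B(\mc D_r)$, maps the whole space into the $\pm$-symmetric subspace, and on that subspace coincides with the operator obtained by transporting $\mc L_{G,s}|_{V_\pm}$ through this parametrization. Verifying this amounts to applying $\mc L_{G,s}$ to $(f_{q-1},f_r,\pm f_{q-1})$, splitting $\sum_{k=2}^{q-2}\tau_s(g_k)$ into $\sum_{k=2}^{m-1}$ and $\sum_{k=m}^{q-2}$, and pairing the halves via $Qg_kQ=g_{q-k}$ to produce the combinations $\tau_s(g_k)\pm\tau_s(Qg_k)$. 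For even $q$, the middle index $m=q/2$ satisfies $g_m=g_{q-m}$, so $g_m$ lies in both halves of the split; this is precisely why it enters $\mc L^\pm_{G,s}$ with the corrective weight $\tfrac12$, and also where the discrepancy with \cite{Moeller_Pohl} noted just before the theorem is resolved.

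With these pieces in place, assertions (i)--(iv) follow routinely. For (i) and (ii), $\mc L^\pm_{G,s}$ is conjugate (on its effective domain) to the restriction of $\mc L_{G,s}$ to the closed complemented subspace $V_\pm$, so it inherits nuclearity of order $0$ and the meromorphic extension with possible poles at $s=(1-k)/2$ from Theorem~\ref{fastMP}(i) and (iii). Assertion (iii) is the tautological statement that an eigenfunction of $\mc L_{G,s}$ with eigenvalue $\lambda$ splits along $V_+\oplus V_-$, with each piece surviving as an eigenfunction of the corresponding block. For (iv) I would invoke multiplicativity of the Fredholm determinant on topological direct sums to write $\det(1-\mc L_{G,s})=\det(1-\mc L^+_{G,s})\det(1-\mc L^-_{G,s})$, and then combine with $Z(s)=\det(1-\mc L_{G,s})$ from Theorem~\ref{fastMP}(\ref{fastMPii}); this gives the identity first for $\Rea s>1$ and then on all of $\C$ by meromorphic continuation.

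The main obstacle is the explicit computation in the second paragraph: checking that the transported operator $\mc L_{G,s}|_{V_\pm}$ matches the announced $2\times 2$ matrix form of $\mc L^\pm_{G,s}$ uniformly across the odd-$q$ and even-$q$ cases, and pinning down the correct weights on $\tau_s(g_m)$ and $\tau_s(Qg_m)$ in the even case. Once commutation and the change of basis are set up correctly, everything else is a formal consequence of a commuting involution on a Banach space carrying a nuclear family.
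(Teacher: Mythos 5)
Your proposal is correct and follows essentially the same route as the source: this theorem is quoted from \cite{Moeller_Pohl} without proof in the present paper, but the paper's own description of it is exactly your mechanism (the commuting involution $T_s(Q)$ built from $Qg_kQ=g_{q-k}$, block-diagonalization by a change of basis, and multiplicativity of the Fredholm determinant combined with Theorem~\ref{fastMP}). The one point to keep explicit in a full write-up is that $\mc L^\pm_{G,s}$ on all of $B(\mc D_{q-1})\times B(\mc D_r)$ is not literally conjugate to $\mc L_{G,s}\vert_{V_\pm}$, whose middle factor is only the $\pm$-eigenspace of $\tau_s(Q)$ in $B(\mc D_r)$; rather, as you note, its range lies in that subspace, so the traces of all positive powers and hence the nonzero spectrum and the Fredholm determinant agree, which is all that (iii) and (iv) require.
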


\begin{remark}
In \cite{Moeller_Pohl}, we considered transfer operators $\mc L_{H,s}$ respectively $\mc L_{H,s}^\pm$ conjugate to $\mc L_{G,s}$ respectively $\mc L_{G,s}^\pm$ to avoid a change of charts when discussing their nuclearity and other properties. This modification corresponded to actually study Hecke triangle groups conjugate by a certain Cayley transform. For the investigations in this article such a conjugation does not simplify exposition, for which reason we decided to work with the transfer operators for the original Hecke triangle groups.
\end{remark}


\section{Strategy of the proof of Theorem~\ref{mainintro}}\label{sec_strategy}

To explain the strategy of the proof of Theorem~\ref{mainintro} we briefly recall the key points in the proof of Theorem~\ref{fastMP}\eqref{fastMPii}.

\subsubsection*{Periodic geodesics and hyperbolic $\Gamma_q$-conjugacy classes}
As it is well-known, periodic geodesics on $\Gamma_q\backslash\h$ (period length with multiplicity) are in bijection with the $\Gamma_q$-conjugacy classes of hyperbolic elements in $\Gamma_q$ such that primitive periodic geodesics correspond to $\Gamma_q$-conjugacy classes of primitive hyperbolic elements. This results in the well-known algebraic formulation of the Selberg zeta function
\[
 Z(s) = \prod_{ [g]\in [\Gamma_q]_p } \prod_{k=0}^\infty \left(1-N(g)^{-(s+k)}\right).
\]
Here, 
\[
 [g]\sceq [g]_{\Gamma_q}\sceq \{ hgh^{-1}\mid h\in\Gamma_q\}
\]
is the $\Gamma_q$-conjugacy class of $g\in\Gamma_q$, and 
\[
 [\Gamma_q]_p\sceq \{ [g] \mid \text{$g\in\Gamma_q$ primitive hyperbolic} \}
\]
denotes the set of all primitive hyperbolic $\Gamma_q$-conjugacy classes. For a hyperbolic $g\in\Gamma_q$, we denote its norm by $N(g)$, that is the square of its eigenvalue with the larger absolute value. We recall that $N(g) = \exp\ell(\wh\gamma_g)$ if $\wh\gamma_g$ is the geodesic associated to $[g]\in [\Gamma_q]_p$. For future reference we denote by
\[
 [\Gamma_q]_h \sceq \{ [g] \mid \text{$g\in\Gamma_q$ hyperbolic} \}
\]
the set of all hyperbolic $\Gamma_q$-conjugacy classes.

\subsubsection*{Normal forms for representatives of hyperbolic $\Gamma_q$-conjugacy classes}
The discrete dynamical system $(D_\st, G)$ gives rise to normal forms for representatives of hyperbolic $\Gamma_q$-conjugacy classes as explained in the following. Let
\[
 \Gen_G \sceq \{g_2,\ldots, g_{q-2}\} \cup \{ g_1^k, g_{q-1}^k \mid k\in\N\}
\]
denote the set of generators of $(D_\st, G)$. Let $w=w_1\ldots w_n$ be a word of length $n\in\N$ over the alphabet $\Gen_G$ (that is, $w_i\in \Gen_G$). We say that $w$  is \textit{reduced} if $w$ does not contain any subword of the form $g_1^{k_1}g_1^{k_2}$ or $g_{q-1}^{k_1}g_{q-1}^{k_2}$ for $k_1,k_2\in\N$. We say that $w$ is \textit{regular} if $ww$ is reduced.

We define a labeling of the cross section $\wh C_G$ and introduce coding sequences for its elements as follows. Let $v\in C'_G$ and consider the determined geodesic $\gamma_v$ on $\h$. Let $t_0$ be the first return time of $\pi(v)$. From \cite{Moeller_Pohl} we know that there is a \textit{unique} element $g\in\Gamma_q$ such that
\[
 \gamma'_v(t_0) \in g.C'_G.
\]
More precisely, $g^{-1}\in \Gen_G$ and 
\begin{itemize}
\item $g = g_k^{-1}$ if and only if $\gamma_v(\infty) \in (g_k^{-1}.0, g_k^{-1}.\infty)$ for some $k\in \{2,\ldots, q-2\}$,
\item $g = g_1^{-n}$  for $n\in\N$ if and only if $\gamma_v(\infty) \in (g_1^{-n}.0, g_1^{-(n+1)}.0)$, and
\item $g = g_{q-1}^{-n}$ for $n\in\N$ if and only if $\gamma_v(\infty) \in (g_{q-1}^{-(n+1)}.\infty, g_{q-1}^{-n}.\infty)$.
\end{itemize}
We assign the label $g^{-1}$ to $v$, and inherit these labels to $\wh C_G$ via the quotient map $\pi$. Now let $\wh v\in \wh C_G$. We assign to $\wh v$ the $G$-coding sequence $(a_0, a_1,\ldots )$, where $a_n$ is the label of $R^n(\wh v)$ for all $n\in\N_0$.
The following facts are proven in \cite{Moeller_Pohl}:
\begin{itemize}
\item Coding sequences are unique. Distinct elements in $\wh C_G$ have distinct coding sequences.
\item If and only if $\wh v\in \wh C_G$ determines a periodic geodesic $\wh\gamma$ on $\Gamma_q\backslash\h$, the associated $G$-coding sequence $(a_n)_{n\in\N_0}$ is periodic. If $v\sceq \big(\pi\vert_{C'_G}\big)^{-1}(\wh v)$ and $\gamma_v$ is the geodesic on $\h$ determined by $v$, and $(\overline{a_0,\ldots, a_{k-1}}) = (a_n)_{n\in\N_0}$ with $k\in\N$ minimal, then $(a_0\cdots a_{k-1})^{-1}$ is the primitive hyperbolic element which fixes $\gamma_v(\pm \infty)$ and has $\gamma_v(\infty)$ as attracting fixed point. Moreover, the word $a_0\ldots a_{k-1}$ is reduced. 
\item An element $g \in \Gamma_q$ is hyperbolic if and only if $[g]_{\Gamma_q}$ contains a representative of the form 
\begin{equation}\label{repr}
 h_{i_1}\cdots h_{i_n}
\end{equation}
with $h_{i_j}\in \Gen_G$ such that $h_{i_a} \notin\{g_1^k\mid k\in\N\}$ and $h_{i_b}\notin\{g_{q-1}^k\mid k\in\N\}$ for some $a,b\in \{1,\ldots, n\}$ and such that the word $h_{i_1}\ldots h_{i_n}$ is regular. In this case, the representative in \eqref{repr} is unique up to cyclic permutation. Moreover, $g$ is primitive if and only if the representative in \eqref{repr} is not of the form $p^{\frac{\ell}{k}}$ with
\[
 p = h_{i_1}\cdots h_{i_k}
\]
for some $k<\ell$. 
\item If $g\in\Gamma_q$ is primitive hyperbolic, then the representatives in \eqref{repr} arise as follows: Let $\gamma$ be a geodesic on $\h$ such that $\gamma(\infty)$ is the attracting fixed point of $g$ and $\gamma(-\infty)$ is the repelling fixed point. This geodesic is unique up to time-shifts. Then the geodesic $\wh\gamma \sceq \pi(\gamma)$ on $\Gamma_q\backslash\h$ is periodic and intersects as such the cross section $\wh C_G$. Suppose that $\wh v_1,\ldots, \wh v_\ell$ are precisely the intersection vectors. The $G$-coding sequences of these vectors are periodic. All of them have the same minimal period length and they only differ by right-shifts. Let $(a_n)_{n\in\N_0} = (\overline{a_0,\ldots, a_{\ell-1}})$ be one of these $G$-coding sequences. Clearly, $\ell\in\N$ is the minimal period length. Then the representatives in \eqref{repr} are precisely
\begin{equation}\label{primrepr}
 a_0\cdots a_{\ell-1},\quad a_1\cdots a_{\ell-1}a_0,\quad \ldots,\quad a_{\ell-1}a_0\cdots a_{\ell-2}.
\end{equation}
If $g\in\Gamma_q$ is hyperbolic but not primitive and $h$ is the primitive hyperbolic element in $\Gamma_q$ such that $h^n= g$ for some $n\in\N$, then one has to take the $n$-powers of the representatives in \eqref{primrepr}.
\end{itemize}

For $[g]_{\Gamma_q} \in [\Gamma_q]_h$ we consider all of its representatives from \eqref{repr} to be a normal form.

\subsubsection*{Traces of transfer operators and the Selberg zeta function}
By definition, the Fredholm determinant of $\mc L_{G,s}$ is
\[
 \det(1-\mc L_{G,s}) = \exp\left( -\sum_{n=1}^\infty \frac1n \Tr\mc L_{G,s}^n\right),
\]
where in our situation (see \cite{Moeller_Pohl})
\[
 \Tr \mc L_{G,s}^n = \sum_{a\in P_n} \Tr \tau_s(a)
\]
with $P_n$ being the set of normal forms for representatives of length $n$ of hyperbolic $\Gamma_q$-conjugacy classes and 
\[
 \Tr \tau_s(a) = \frac{N(a)^{-s}}{1-N(a)^{-1}}.
\]
For $g\in\Gamma_q$ hyperbolic, let $n=n(g)$ be the maximal element in $\N$ such that there exists $h\in\Gamma_q$ primitive hyperbolic with $h^n=g$. For the Selberg zeta function we have
\[
 \log Z(s) = - \sum_{[g]\in [\Gamma_q]_h} \frac{1}{n(g)} \frac{N(g)^{-s}}{1-N(g)^{-1}}.
\]
Now an easy counting establishes the identity
\[
 \log Z(s) = - \sum_{n=1}^\infty \frac1n \Tr \mc L_{G,s}^n.
\]

In this article we use the billiard flow on $\wt\Gamma_q\backslash\h$ to establish analogous relations between the operators $\mc L_{G,s}^\pm$ and the zeta functions $Z^V_\pm$. As a flow, the billiard flow is identical to the geodesic flow on $\wt\Gamma_q\backslash\h$. In the definition of discrete dynamical systems we will allow weights which reflect whether a periodic geodesic on $\wt\Gamma_q\backslash\h$ has the same period length as the corresponding periodic geodesic on $\Gamma_q\backslash\h$ or half of it. In this way we can accommodate Dirichlet and Neumann boundary value conditions.

The (fast) discrete dynamical systems from which $\mc L^\pm_{G,s}$ arise as transfer operator families will be denoted by $(I_\st, G^Q,\pm)$ for some interval $I$ in $\R$ (which depends on whether $q$ is even or odd) and where $\pm$ defines the choice of weights. To simplify exposition we first construct (slow) discrete dynamical systems $(I_\st, F^Q, \pm)$ in a certain analogy to $(D_\st, F)$. These provide a transfer operator interpretation to the functional equations for odd and even period functions from Section~\ref{sec_MPslow}.

We introduce a few notions and definitions.

\subsubsection*{Hyperbolic elements in $\wt\Gamma_q$}
An element $h\in \wt\Gamma_q$ is called \textit{hyperbolic} if $h^2\in \Gamma_q$ is hyperbolic. The norm of $h$ is defined as $N(h) = N(h^2)^{1/2}$. The element $h$ is called ($\wt\Gamma_q$-)\textit{primitive} hyperbolic if $h$ is not a nontrivial integral power of any hyperbolic element in $\wt\Gamma_q$. For $g \in \wt\Gamma_q$ we use 
\[
[g]\sceq [g]_{\wt\Gamma_q} \sceq \{ hgh^{-1} \mid h\in\wt\Gamma_q\} 
\]
to denote the $\wt\Gamma_q$-conjugacy class of $g$. It will always be clear from the context whether $[g]$ refers to a $\wt\Gamma_q$-conjugacy class or a $\Gamma_q$-conjugacy class. Moreover, we define
\[
 [\wt\Gamma_q]_h \sceq \{ [g]_{\wt\Gamma_q} \mid \text{$g\in \wt\Gamma_q$ hyperbolic}\}
\]
and
\[
 [\wt\Gamma_q]_p \sceq \{ [g]_{\wt\Gamma_q} \mid \text{$g\in \wt\Gamma_q$ primitive hyperbolic}\}.
\]

\subsubsection*{Cross sections for the billiard flow on $\wt\Gamma_q\backslash\h$}
Recall that $\pi$ denotes the quotient map $S\h\to \Gamma_q\backslash S\h$. Let $\pi_Q\colon S\h \to \wt\Gamma_q\backslash S\h$ denote the canonical quotient map. As cross sections for the billiard flow on $\wt\Gamma_q\backslash \h$ we use
\[
 \wh C_{F^Q} \sceq \pi_Q\big(\pi^{-1}\big(\wh C_F\big)\big) \quad\text{and}\quad \wh C_{G^Q} \sceq \pi_Q\big( \pi^{-1}\big(\wh C_G\big)\big).
\]
Note that $1\in\R$ is a fixed point of $Q$ and $Q.[1,\infty)_\st = (0,1]_\st$. Therefore, 
\begin{align*}
C'_{F^Q} &\sceq \{ v\in C'_F \mid \gamma_v(\infty)\in (0,1]_\st\}
\\
&\  = \{ v\in S\h \mid \text{$v$ is based on $i\R^+$, $\gamma_v(\infty)\in (0,1]_\st$}\}
\end{align*}
is a set of representatives for $\wh C_{F^Q}$, and
\[
C'_{G^Q} \sceq \{ v\in C'_G \mid \gamma_v(\infty) \in (0,1]_\st \}.
\]
is one for $\wh C_{G^Q}$. 

\subsubsection*{The significant difference between odd $q$ and even $q$}
A straighforward calculation shows that, for any $k\in\N$, we have
\[
 g_k = \frac{1}{\sin\tfrac\pi q}\bmat{\sin\left(\frac{k}{q}\pi\right)}{-\sin\left(\frac{k+1}{q}\pi\right)}{-\sin\left(\frac{k-1}{q}\pi\right)}{\sin\left(\frac{k}{q}\pi\right)}.
\]
If $q$ is odd, then $1\in\R$ is cuspidal as $g_{\frac{q+1}{2}}^{-1}.\infty=1$. In this case, the union 
\[
 C'_{F^Q} \cup Q.C'_{F^Q} = C'_F
\]
is disjoint, and we can use $C'_{F^Q}$ (and $C'_{G^Q}$) to construct discrete dynamical systems and deduce transfer operator families in analogy to Section~\ref{sec_known}. Moreover, periodic billiards are in bijection with hyperbolic $\wt\Gamma_q$-conjugacy classes. 
The proof of Theorem~\ref{mainintro} for odd $q$ is provided in Section~\ref{odddone} below.

In contrast, if $q$ is even, then $1$ is fixed by the hyperbolic element $g_{\frac{q}{2}}$ and we have
\[
 C'_{F^Q} \cap Q.C'_{F^Q} = \{ v\in C'_F \mid \gamma_v(\infty) = 1\}.
\]
The unit tangent vector $\pi_Q\left(\tfrac{\partial}{\partial x}\vert_i\right) \in \wh C_{F^Q}$ corresponds to the periodic billiard trajectory on $\wt\Gamma_q\backslash\h$ which is represented by the geodesic on $\h$ with endpoints $\pm 1$ (the axis of $g_{\frac{q}{2}}$). This is a so-called boundary periodic geodesic, since it belongs to the boundary of a fundamental domain of $\wt\Gamma_q$ (see Figure~\ref{Forddom}). The periodic billiard trajectory on $\wt\Gamma_q\backslash\h$ corresponds to the two distinct primitive hyperbolic $\wt\Gamma_q$-conjugacy classes $[g_{\frac{q}{2}}]_{\wt\Gamma_q}$ and $[Qg_{\frac{q}{2}}]_{\wt\Gamma_q}$. 

The existence of this boundary periodic geodesic results in the fact that any coding which allows a classification of $\wt\Gamma_q$-primitive hyperbolic elements will have a double-labeling of the vectors $\pi_Q(v)$ with $v\in C'_{F^Q}\cap Q.C'_{F^Q}$. Moreover, any induced discrete dynamical system on (parts of) $\R$ will be a relation rather than a function. Therefore, for even $q$, the proof of Theorem~\ref{mainintro} is more involved. We provide it in Section~\ref{evendone}.


\section{Hecke triangle groups $\Gamma_q$ with $q$ odd}\label{odddone}

Since $q$ is odd and hence $1$ is cuspidal, we have 
\[
 C'_{F^Q} \cap Q.C'_{F^Q} = \emptyset \quad\text{as well as}\quad C'_{G^Q} \cap Q.C'_{G^Q} = \emptyset.
\]
In this situation, we deduce discrete dynamical systems on $(0,1]_\st = (0,1)_\st$ as in Section~\ref{sec_known} using $C'_{F^Q}$ instead of $C'_F$ and $C'_{G^Q}$ instead of $C'_G$. One can even read them off from $(D_\st, F)$ and $(D_\st, G)$. We endow the arising dynamical systems with weights as indicated in Sections~\ref{sec:slowodd} and \ref{sec:fastodd} below. If $(I_\st, H, w)$ is a (weighted) discrete dynamical system with the weighted submaps
\[
 I_{g,\st} \to H(I_{g,\st}),\ x\mapsto g.x \qquad \text{weight: $w_g$},
\]
then the associated transfer operator $\mc L_{H,s}^w$ with parameter $s\in\C$ is (formally) given by
\[
 \mc L_{H,s}^w = \sum w_g \cdot 1_{H(I_{g,\st})} \cdot \tau_s(g),
\]
where $1_{H(I_{g,\st})}$ denotes the indicator function of $H(I_{g,\st})$.
Throughout let 
\[
 m = \left\lfloor \frac{q+1}{2}\right\rfloor = \frac{q+1}{2}
\]
and note that for $k\in\Z$, we have
\[
 Qg_k = g_{q-k}Q.
\]

\subsection{Slow discrete dynamical system}\label{sec:slowodd}
The arising slow discrete dynamical systems $( (0,1)_\st, F^Q, \pm)$ are given by the weighted submaps
\begin{align*}
& (g_k^{-1}.0,g_k^{-1}.1)_\st \to (0,1)_\st,\quad x\mapsto g_k.x && \text{weight: $+1$}
\intertext{and}
& (g_k^{-1}.1,g_k^{-1}.\infty)_\st \to (0,1)_\st,\quad x\mapsto Qg_k.x && \text{weight: $\pm 1$}
\end{align*}
for $k=m,\ldots, q-1$. The associated transfer operator with parameter $s\in\C$ is
\[
 \mc L^\pm_{F^Q,s} = \sum_{k=m}^{q-1} \tau_s(g_k) \pm \tau_s(Qg_k),
\]
defined on $\Fct( (0,1)_\st; \C)$. Clearly, $\mc L^\pm_{F^Q,s}$ also makes sense as an operator on $\Fct( (0,1);\C)$ and on $\Fct(\R_{>0};\C)$. Comparing to Section~\ref{sec_known}, we see that this construction provides a transfer operator interpretation of the functional equations for odd and even Maass cusp forms. The space $\Fct( (0,1); \C)$ is, in a certain sense, a fundamental domain for $\mc L^\pm_{F^Q,s}$.

\subsection{Fast discrete dynamical system}\label{sec:fastodd}
The two fast discrete dynamical systems $( (0,1)_\st, G^Q,\pm)$ are given by the submaps
\begin{align*}
& (g_k^{-1}.0, g_k^{-1}.1)_\st \to (0,1)_\st,\quad x\mapsto g_k.x &&\text{weight: $+1$}
\\
& (g_k^{-1}.1, g_k^{-1}.\infty)_\st \to (0,1)_\st,\quad x\mapsto Qg_k.x && \text{weight: $\pm 1$}
\end{align*}
for $k=m,\ldots, q-2$, and
\begin{align*}
(g_{q-1}^{-(n+1)}.\infty, g_{q-1}^{-n}.1)_\st \to (g_{q-1}^{-1}.\infty, 1)_\st,\quad x\mapsto g_{q-1}^n.x &&\text{weight: $+1$}
\\
(g_{q-1}^{-n}.1, g_{q-1}^{-n}.\infty)_\st \to (0,1)_\st,\quad x\mapsto Qg_{q-1}^n.x && \text{weight: $\pm 1$}
\end{align*}
for $n\in\N$.

Then the associated transfer operator with parameter $s\in\C$ is (formally)
\[
 \mc L^\pm_{G^Q,s} = 
\begin{pmatrix}
\pm \sum\limits_{n\in\N} \tau_s(Qg_{q-1}^n) & \sum\limits_{k=m}^{q-2} \tau_s(g_k) \pm \tau_s(Qg_k)
\\
\sum\limits_{n\in\N} \tau_s(g_{q-1}^n) \pm \tau_s(Qg_{q-1}^n) & \sum\limits_{k=m}^{q-2} \tau_s(g_k) \pm \tau_s(Qg_k)
\end{pmatrix}.
\]
As domain of definition we use the Banach space $B(\mc D_{q-1})\times B(\mc D_r)$ from Section~\ref{sec_known} and recall from Theorem~\ref{MPtwist} that $\mc L^\pm_{G^Q,s}$ is a well-defined operator for $\Rea s > \tfrac12$ and that the map $s\mapsto \mc L^\pm_{G^Q,s}$ extends meromorphically to all of $\C$.

\subsection{Zeta functions and Fredholm determinants of transfer operators}

We set
\begin{align*}
Z_+(s) & \sceq \prod_{[g]\in [\wt\Gamma_q]_p}\prod_{k=0}^\infty \left(1-\det g^k\cdot N(g)^{-(s+k)}\right)
\intertext{and}
Z_-(s) & \sceq \prod_{[g]\in [\wt\Gamma_q]_p}\prod_{k=0}^\infty \left(1-\det g^{k+1}\cdot N(g)^{-(s+k)}\right).
\end{align*}
Both functions converge absolutely for $\Rea s > 1$. In this section we prove the following relation.

\begin{thm}\label{mainodd}
For $\Rea s>1$ we have 
\[
 Z_{\pm}(s) = \det(1-\mc L^\pm_{G^Q,s}).
\]
Moreover, the zeta functions $Z_{\pm}$ admit a meromorphic continuation to all of $\C$ with possible poles at $s=(1-k)/2$, $k\in\N_0$.
\end{thm}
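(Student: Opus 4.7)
The plan is to mirror the proof of Theorem~\ref{fastMP}\eqref{fastMPii} outlined in Section~\ref{sec_strategy}, now for the billiard flow on $\wt\Gamma_q\backslash\h$ and with the dynamics $(D_\st,G)$ replaced by $((0,1)_\st, G^Q, \pm)$. The hypothesis that $q$ be odd enters via $C'_{F^Q}\cap Q.C'_{F^Q}=\emptyset$, which guarantees that $C'_{G^Q}$ is a faithful set of representatives for $\wh C_{G^Q}$ and that periodic billiards biject with hyperbolic $\wt\Gamma_q$-conjugacy classes. I introduce the generating set
\[
 \Gen_{G^Q} \sceq \{g_k,Qg_k \mid k=m,\ldots,q-2\}\cup\{g_{q-1}^n,Qg_{q-1}^n\mid n\in\N\}
\]
and label each $\wh v\in\wh C_{G^Q}$ by the element $h\in\Gen_{G^Q}$ whose submap $x\mapsto h.x$ governs the first return of $\gamma_v$. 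The four structural facts from Section~\ref{sec_strategy} then carry over by essentially the same arguments as in \cite{Moeller_Pohl}: $G^Q$-coding sequences are unique; a coding is periodic iff the billiard is; $[\wt\Gamma_q]_h$ is classified up to cyclic permutation by regular $\Gen_{G^Q}$-words; and primitivity corresponds to indecomposability into equal subblocks. For hyperbolic $a\in\wt\Gamma_q$ I use the conventions $N(a)^2=N(a^2)$ and $\det a=(-1)^{\#Q(a)}$, where $\#Q(a)$ counts the letters of $a$ carrying a $Q$-factor.

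For $\Rea s>1$ the operator $\mc L^\pm_{G^Q,s}$ is nuclear of order $0$ on $B(\mc D_{q-1})\times B(\mc D_r)$ by Theorem~\ref{MPtwist}(i), so
\[
\det(1-\mc L^\pm_{G^Q,s}) = \exp\Bigl(-\sum_{n=1}^\infty \tfrac{1}{n}\Tr(\mc L^\pm_{G^Q,s})^n\Bigr).
\]
Expanding the $n$-th power of the matrix representation and collecting fixed-point contributions at the attracting fixed point of each normal-form word $a$ of length $n$, one obtains
\[
\Tr(\mc L^\pm_{G^Q,s})^n=\sum_{a\in P_n^Q}\eps_\pm(a)\cdot\Tr\tau_s(a),\qquad \eps_\pm(a)=(\pm 1)^{\#Q(a)},
\]
where $P_n^Q$ is the set of length-$n$ normal forms. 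A direct fixed-point calculation, using that the projective derivative of a hyperbolic $a\in\wt\Gamma_q$ at its attracting fixed point equals $\det(a)\cdot N(a)^{-1}$, gives
\[
\Tr\tau_s(a) = \frac{N(a)^{-s}}{1-\det(a)\cdot N(a)^{-1}}.
\]
Since $\det a = (-1)^{\#Q(a)}$, this means $\eps_+(a)=1$ and $\eps_-(a)=\det a$.

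On the zeta side, expanding $-\log Z_\pm(s)$ as a double geometric series and regrouping $h=g^m$ over primitive $[g]\in[\wt\Gamma_q]_p$ produces
\[
-\log Z_+(s) = \sum_{[h]\in[\wt\Gamma_q]_h}\frac{1}{n(h)}\cdot\frac{N(h)^{-s}}{1-\det(h)\cdot N(h)^{-1}},
\]
together with the analogous identity for $-\log Z_-(s)$ carrying an additional factor $\det(h)$ in the numerator. The counting argument from Section~\ref{sec_strategy} — for each $[h]$ with primitive root $p$, $n(h)=k$, and word-length $\ell=\ell(p)$, the $k\ell$ cyclic shifts of $p^k$'s word give exactly $\ell$ distinct representatives, so $\sum_n n^{-1}\cdot\#\{a\in P_n^Q : [a]=[h]\}=1/n(h)$ — identifies these sums with $\sum_n n^{-1}\Tr(\mc L^\pm_{G^Q,s})^n$. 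Therefore $Z_\pm(s)=\det(1-\mc L^\pm_{G^Q,s})$ on $\Rea s>1$. The meromorphic continuation of $Z_\pm$ to $\C$, with possible poles at $s=(1-k)/2$, $k\in\N_0$, is then inherited from the corresponding statement for $\mc L^\pm_{G^Q,s}$ in Theorem~\ref{MPtwist}(ii).

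The main obstacle is the coding step: importing the classification of $[\wt\Gamma_q]_h$ by regular $\Gen_{G^Q}$-words, tracking carefully how the orientation-reversing factors $Q$ interact with the cross-section geometry and with the combinatorics of reduced/regular words, and verifying that the weight $\pm 1$ on each $Q$-submap of $G^Q$ reproduces the sign $\det(h)$ in $Z_\pm$ exactly. For odd $q$ the disjointness $C'_{F^Q}\cap Q.C'_{F^Q}=\emptyset$ reduces this to bookkeeping built on top of \cite{Moeller_Pohl}; the failure of this disjointness for even $q$ is precisely what motivates the more delicate treatment of Section~\ref{evendone}.
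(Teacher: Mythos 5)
Your proposal is correct and follows essentially the same route as the paper: the same generating set $\Gen_{G^Q}$, the same weighted trace identity $\Tr b_s^{\pm}(a)=(\pm1)^{\#Q(a)}\Tr\tau_s(a)$ with $(\det a)=(-1)^{\#Q(a)}$, the same regrouping of $-\log Z_\pm$ over hyperbolic classes with the $\ell/(n\ell)=1/n(h)$ counting, and meromorphic continuation via Theorem~\ref{MPtwist}. The only step you assert rather than carry out — the classification of $[\wt\Gamma_q]_h$ by regular $\Gen_{G^Q}$-words — is exactly what the paper's Proposition~\ref{hyp} supplies, via an explicit letter-by-letter translation between $G$- and $G^Q$-coding sequences that hinges on the disjointness $C'_{G^Q}\cap Q.C'_{G^Q}=\emptyset$ you correctly identified.
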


Let
\[
 \Gen_{G^Q} \sceq \{g_m,\ldots, g_{q-2}, Qg_m,\ldots, Qg_{q-2}\} \cup \{ g_{q-1}^k, Qg_{q-1}^k\mid k\in\N\}
\]
denote the set of generators of $((0,1)_\st, G^Q, \pm)$. For a word $h=h_1\ldots h_n$ of length $n\in\N$ with $h_j\in \Gen_{G^Q}$ and $s\in\C$, we define
\begin{equation}\label{bs}
 b_s^\pm(h) \sceq (\pm 1)^\eps \tau_s(h),
\end{equation}
where 
\[
\eps = \eps(h) \sceq \#\left\{ j \in \{1,\ldots n\} \left\vert\ h_j \in \{Qg_m,\ldots Qg_{q-2}, Qg_{q-1}^\ell\mid \ell\in\N\} \right.\right\}.
\]
On the right hand side of \eqref{bs}, we identified the word $h=h_1\ldots h_n$ with the element $h=h_1\cdots h_n$. We say that $h$ is \textit{reduced} if and only if $h$ does not contain any subword of the form $g_{q-1}^{m_1}g_{q-1}^{m_2}$ or $Qg_{q-1}^{m_1}g_{q-1}^{m_2}$ for $m_1,m_2\in\N$. We let $W_n^\redu(\Gen_{G^Q})$ denote the set of all reduced words over the alphabet $\Gen_{G^Q}$ of length $n$, and set
\[
 W_*^\redu(\Gen_{G^Q})=\bigcup_{n\in\N} W_n^\redu(\Gen_{G^Q}).
\]
We say that $h$ is \textit{regular} if $hh$ is reduced.

For $n\in\N$, let 
\begin{itemize}
\item $B_1^n$ denote the words in $W_n^\redu\big(\Gen_{G^Q}\big)$ which end with $g_{q-1}^\ell$ or $Qg_{q-1}^\ell$ for some $\ell\in\N$ and do not begin with $g_{q-1}^k$ for any $k\in\N$,
\item $B_2^n$ denote the words in $W_n^\redu(\Gen_{G^Q}\big)$ which end with an element of $\{g_k, Qg_k\mid k=m,\ldots, q-2\}$ and do not begin with $g_{q-1}^k$ for any $k\in\N$,
\item $B_3^n$ denote the words in $W_n^\redu(\Gen_{G^Q}\big)$ which end with $g_{q-1}^\ell$ or $Qg_{q-1}^\ell$ for some $\ell\in\N$, and
\item $B_4^n$ denote the words in $W_n^\redu(\Gen_{G^Q}\big)$ which end with an element of $\{g_k, Qg_k\mid k=m,\ldots, q-2\}$.
\end{itemize}

Note that the elements of $B_1^n$ and $B_4^n$ are regular. A straighforward induction immediately proves the following formula for the powers of $\mc L^{\pm}_{G^Q,s}$.

\begin{lemma}
For $n\in\N$ we have
\[
\left(\mc L^\pm_{G^Q,s}\right)^n = 
\begin{pmatrix}
\sum\limits_{a\in B_1^n} b_s^\pm(a) & \sum\limits_{a\in B_2^n} b_s^\pm(a)
\\
\sum\limits_{a\in B_3^n} b_s^\pm(a) & \sum\limits_{a\in B_4^n} b_s^\pm(a) 
\end{pmatrix}.
\]
\end{lemma}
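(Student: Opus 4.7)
The strategy is a straightforward induction on $n$, based on the factorisation $(\mc L^\pm_{G^Q,s})^{n+1} = (\mc L^\pm_{G^Q,s})^n \cdot \mc L^\pm_{G^Q,s}$. The base case $n=1$ will be a direct comparison of the four entries of $\mc L^\pm_{G^Q,s}$ with the sets $B_i^1$: for instance, the single letters in $\Gen_{G^Q}$ ending with $g_{q-1}^\ell$ or $Qg_{q-1}^\ell$ and not beginning with $g_{q-1}^k$ are exactly the letters $Qg_{q-1}^m$, $m \in \N$, matching the $(1,1)$-entry $\pm \sum_m \tau_s(Qg_{q-1}^m)$ of $\mc L^\pm_{G^Q,s}$; the sign agrees since $\eps(Qg_{q-1}^m)=1$. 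The remaining three entries are handled analogously.

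For the inductive step the essential algebraic input is that $\tau_s$ is multiplicative, $\tau_s(h_1)\tau_s(h_2)=\tau_s(h_1h_2)$, and that $\eps$ is additive under concatenation, so that $b_s^\pm(a)\,b_s^\pm(c) = b_s^\pm(ac)$. Expanding the matrix product, the $(i,j)$-entry of $(\mc L^\pm_{G^Q,s})^{n+1}$ becomes a double sum of terms $b_s^\pm(ac)$ where $a$ is a length-$n$ reduced word enumerated (by the induction hypothesis) in slot $(i,k)$ of $(\mc L^\pm_{G^Q,s})^n$ and $c$ is a single letter drawn from slot $(k,j)$ of $\mc L^\pm_{G^Q,s}$.

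It remains to identify this output with the desired set $B_?^{n+1}$. The row $i$ correctly tracks the leftmost-letter constraint, since it is inherited from $a$ and the first letter of $ac$ is the first letter of $a$; the column $j$ correctly tracks the type of the rightmost letter, which is exactly $c$. The key structural point is that whenever the internal index $k=1$, so that $a$ ends in a letter of $g_{q-1}^*$-type, the first row of $\mc L^\pm_{G^Q,s}$ excludes the pure parabolic letters $g_{q-1}^m$ and only admits $Qg_{q-1}^m$, $g_k$ and $Qg_k$. This is precisely the restriction needed to forbid the subwords $g_{q-1}^{m_1}g_{q-1}^{m_2}$ and $Qg_{q-1}^{m_1}g_{q-1}^{m_2}$ at the junction, so $ac$ is reduced; when $k=2$ no forbidden subword can form at all, because the last letter of $a$ is of $g_k$-type. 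Conversely every $w \in B_?^{n+1}$ decomposes uniquely as $w=a\cdot w_{n+1}$, and the induction hypothesis places the prefix $a$ in exactly one of $B_1^n,\ldots,B_4^n$ according to the type of $w_n$ together with the row constraint on $w_1$; this furnishes the bijection.

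The argument contains no genuine analytic obstacle, being a combinatorial identity propagated through matrix multiplication. The one step that deserves attention is the bijection-check at the junction in the case $w_{n+1}=g_{q-1}^m$ (a pure parabolic letter): this is the only configuration in which reducedness genuinely constrains the prefix, forcing $w_n$ to be of $g_k$-type and hence locating the prefix in $B_2^n$ or $B_4^n$. Verifying this single case carefully — and confirming that it is exactly the contribution supplied by the $(k,j)=(2,1)$-block of $\mc L^\pm_{G^Q,s}$ via the letter $g_{q-1}^m$ there — is what closes the induction cleanly.
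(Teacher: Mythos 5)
Your proposal is correct and is precisely the ``straightforward induction'' the paper invokes without writing out: the base case by inspection of the four entries, and the inductive step via multiplicativity of $b_s^\pm$ under concatenation of words together with the observation that the row-$1$ entries of $\mc L^\pm_{G^Q,s}$ exclude the letters $g_{q-1}^\ell$, which is exactly the reducedness constraint at the junction. (For the even-$q$ version of the lemma one also needs that the exponent $k(\cdot)$ in the weight $2^{-k}$ is additive under concatenation --- immediate from its definition --- so that your key identity $b_s^\pm(a)\,b_s^\pm(c)=b_s^\pm(ac)$ holds there as well.)
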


The following proposition is a consequence of the normal forms for representatives of hyperbolic $\wt\Gamma_q$-conjugacy classes induced by $( (0,1)_\st, G^Q, \pm)$.

\begin{prop}\label{hyp}
\begin{enumerate}[{\rm (i)}]
\item\label{hypi} Let $[w]\in [\wt\Gamma_q]_h$. Then there exists a regular word in the set $W_*^\redu(\Gen_{G^Q})$ which represents $[w]$. The length of this word is unique, say $\ell(w)$. Let $[h]\in [\wt\Gamma_q]_p$ and $n\in\N$ be the unique elements such that $[h^n]=[w]$. Then there are exactly $\ell(h)$ regular representatives of $[w]$ in $W_*^\redu(\Gen_{G^Q})$.
\item\label{hypii} For any $n\in\N$, the union $B_1^n\cup B_4^n$ is the set of the regular representatives in $W_*^\redu(\Gen_{G^Q})$ of the elements in $[\wt\Gamma_q]_h$ of length $n$.
\end{enumerate}
\end{prop}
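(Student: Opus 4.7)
My plan is to establish Proposition~\ref{hyp} by transferring the normal-form statements for $(D_\st, G)$ and $\Gamma_q$ recalled in Section~\ref{sec_known} to the billiard setting via the relation $\wh C_{G^Q} = \pi_Q(\pi^{-1}(\wh C_G))$. For odd $q$ one has $C'_{G^Q} \cap Q.C'_{G^Q} = \emptyset$, so the cross section $\wh C_{G^Q}$ inherits from $\wh C_G$ an unambiguous $\Gen_{G^Q}$-labeling: each $\wh v\in \wh C_{G^Q}$ gets the generator that realizes its first-return submap in $((0,1)_\st, G^Q,\pm)$---namely one of $g_k^{-1}$, $(Qg_k)^{-1}$ for $m\le k\le q-2$, or $g_{q-1}^{-n}$, $(Qg_{q-1}^n)^{-1}$ for $n\in\N$---depending on where $\gamma_v(\infty)$ lies. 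This yields $G^Q$-coding sequences for the intersection vectors of periodic billiards with $\wh C_{G^Q}$, in complete analogy with the $G$-coding sequences of Section~\ref{sec_known}.

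For (i), let $[w]_{\wt\Gamma_q}\in[\wt\Gamma_q]_h$ and pick a geodesic $\gamma$ on $\h$ whose endpoints on $\partial\h$ are the fixed points of the hyperbolic isometry determined by (a suitable power of) $w$. Its projection $\wh\gamma = \pi_Q(\gamma)$ is a periodic billiard on $\wt\Gamma_q\backslash\h$ which intersects $\wh C_{G^Q}$ in a finite nonempty set $\wh v_1,\ldots,\wh v_\ell$; the associated $G^Q$-coding sequences are purely periodic of common minimal period $\ell$, differing only by cyclic shifts. One such period $a_0\cdots a_{\ell-1}$ is reduced by construction of $G^Q$ (consecutive $g_{q-1}$-blocks have already been accelerated into a single generator), and regular because its infinite periodic extension $(\overline{a_0,\ldots,a_{\ell-1}})$ is itself a reduced coding. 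The product $a_0\cdots a_{\ell-1}\in\wt\Gamma_q$ is hyperbolic, shares the axis of the corresponding lift of $\wh\gamma$, and is therefore $\wt\Gamma_q$-conjugate to $w$. The integer $\ell(w)\sceq\ell$ equals the number of intersections of the periodic billiard with $\wh C_{G^Q}$ and so is an intrinsic invariant of $[w]$. If $[w]=[h^n]$ with $[h]\in[\wt\Gamma_q]_p$, then the billiards for $[w]$ and $[h]$ have the same underlying image but that of $[w]$ is traversed $n$ times per period; a period word for $[w]$ is thus the $n$-fold concatenation of a period word for $[h]$, and its cyclic shifts collapse to exactly $\ell(h)$ distinct words, matching the $\ell(h)$ admissible starting vectors.

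For (ii), a reduced word $w = a_0\cdots a_{n-1}$ is regular iff the junction $a_{n-1}a_0$ in $ww$ is neither $g_{q-1}^{m_1}g_{q-1}^{m_2}$ nor $Qg_{q-1}^{m_1}g_{q-1}^{m_2}$. This splits into two disjoint cases: either $a_{n-1}\in\{g_k, Qg_k : m\le k\le q-2\}$ with $a_0$ unrestricted (the defining condition of $B_4^n$), or $a_{n-1}\in\{g_{q-1}^\ell, Qg_{q-1}^\ell : \ell\in\N\}$ with $a_0\notin\{g_{q-1}^k : k\in\N\}$ (the defining condition of $B_1^n$). Combined with (i), which also shows that every regular reduced word represents some $[w]\in[\wt\Gamma_q]_h$ (by interpreting it as a period of a $G^Q$-coding and extracting the corresponding periodic billiard), this yields (ii). The main technical obstacle is justifying the uniqueness of $\ell(w)$ and the exact count of $\ell(h)$ regular representatives; both rest on the bijection between periodic billiards on $\wt\Gamma_q\backslash\h$ and $[\wt\Gamma_q]_h$, which in turn uses the disjointness $C'_{G^Q}\cap Q.C'_{G^Q}=\emptyset$ valid for odd $q$---the failure of this disjointness for even $q$ is exactly what forces the more delicate treatment in Section~\ref{evendone}.
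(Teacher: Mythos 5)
Your proposal is correct and follows essentially the same route as the paper: both derive the normal forms from the $G^Q$-coding of periodic billiards via the cross section $\wh C_{G^Q}$, using the disjointness $C'_{G^Q}\cap Q.C'_{G^Q}=\emptyset$ for odd $q$, with the count $\ell(h)$ coming from the cyclic shifts of a period word and part (ii) from the observation that $B_1^n\cup B_4^n$ exhausts the regular words of length $n$. The only difference is one of emphasis: the paper justifies the properties of the $G^Q$-coding by an explicit iterative algorithm translating $G$-coding sequences into $G^Q$-coding sequences (a bijection), whereas you invoke the billiard-flow analogues of the $(D_\st,G)$ facts directly.
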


\begin{proof}
We assign labels and $G^Q$-coding sequences to the elements of $\wh C_{G^Q}$ in an analogous way as described in Section~\ref{sec_strategy} for $\wh C_G$. Since 
\[
 C'_{G^Q} \cup Q.C'_{G^Q} = C'_G
\]
is a disjoint union, each vector in $\wh C_{G^Q}$ has a unique label. The relation between the maps $G$ and $G^Q$ implies that we have an easy algorithm to translate $G$-coding sequences into $G^Q$-coding sequences and vice versa, which we explain in the following. Let $(a_n)_{n\in\N_0}$ be a $G$-coding sequence of, say, the vector $\wh v\in \wh C_G$ and denote the $G^Q$-coding sequence of $\wt v\sceq \pi_Q(\pi^{-1}(\wh v)) \in \wh C_{G^Q}$ by $(b_n)_{n\in\N_0}$. Now we proceed iteratively to determine $(b_n)_{n\in\N_0}$ from $(a_n)_{n\in\N_0}$ as follows:
\begin{itemize}
\item If $a_0 \in \{g_m,\ldots, g_{q-2}\}\cup \{g_{q-1}^k\mid k\in\N\}$, then set $b_0\sceq a_0$ and $a_1'\sceq a_1$.
\item If $a_0 \in \{g_2,\ldots, g_{m-1}\}\cup \{g_1^k\mid k\in\N\}$, then set $b_0\sceq a_0Q$ and $a_1'\sceq Qa_1$. 
\end{itemize}
Now iteratively for $j=1,2,\ldots$:
\begin{itemize}
\item If $a_j'\in \{g_m, \ldots, g_{q-2}, Qg_m,\ldots, Qg_{q-2}\}\cup \{g_{q-1}^k, Qg_{q-1}^k \mid k\in\N\}$, then set $b_j\sceq a_j'$ and $a_{j+1}'\sceq a_{j+1}$. 
\item If $a_j'\in \{g_2,\ldots, g_{m-1}, Qg_2,\ldots, Qg_{m-1}\} \cup \{g_1^k, Qg_1^k\mid k\in\N\}$, then set $b_j\sceq a_j'Q$ and $a_{j+1}'\sceq Qa_{q+1}$. 
\end{itemize}
For the translation of $G^Q$-coding sequences into $G$-coding sequences we use this algorithm in the opposite direction. This proves that $G$-coding sequences and $G^Q$-coding sequences are in bijection. Since $\wt v\in \wh C_{G^Q}$ determines a periodic billiard if and only if $\wh v$ determines a periodic geodesic on $\Gamma_q\backslash\h$, periodic $G^Q$-coding sequences correspond to 
periodic billiards in the same way as periodic $G$-coding sequences correspond to periodic geodesics on $\Gamma_q\backslash\h$. In particular, it follows that $w\in\wt\Gamma_q$ is hyperbolic if and only if $[w]_{\wt\Gamma_q}$ contains a representative of the form
\begin{equation}\label{reprQ}
 w_{i_1}\cdots w_{i_\ell}
\end{equation}
with $w_{i_j}\in \Gen_{G^Q}$ such that $w_{i_a}\notin \{ g_{q-1}^k\mid k\in\N\}$ for some $a\in \{1,\ldots, \ell\}$ and the word $w_{i_1}\ldots w_{i_\ell}$ is regular. In this case, the representative in \eqref{reprQ} is unique up to cyclic permutation. The element $w$ is $\wt\Gamma_q$-primitive if and only if the representative in \eqref{reprQ} is not of the form $h^{\frac{\ell}{k}}$ with
\[
 h=w_{i_1}\cdots w_{i_k}
\]
for some $k<\ell$. This proves \eqref{hypi}. The statement \eqref{hypii} follows now from the fact that $B_1^n\cup B_4^n$ contains all regular words in $W_n^\redu(\Gen_{G^Q})$.
\end{proof}

\begin{lemma}\label{trace}
For $n\in\N$ and $a\in B_1^n\cup B_4^n$ we have
\begin{align*}
\Tr b_s^-(a) & = \frac{\det a\cdot N(a)^{-s}}{1-\det a\cdot N(a)^{-1}}
\intertext{and}
\Tr b_s^+(a) & = \frac{N(a)^{-s}}{1-\det a\cdot N(a)^{-1}}.
\end{align*}
The value of $\Tr b_s^\pm(a)$ is an invariant of the conjugacy class $[a]_{\wt\Gamma_q}$.
\end{lemma}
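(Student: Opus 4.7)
The plan has three steps: (i) express $b_s^\pm(a)$ in terms of $\tau_s(a)$ and $\det a$; (ii) apply the Atiyah--Bott (holomorphic Lefschetz) trace formula to $\tau_s(a)$ viewed as a nuclear composition operator; (iii) identify the signs that distinguish the orientation-preserving ($\det a = 1$) and orientation-reversing ($\det a = -1$) cases.

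First observe that $\det g_k = 1$ while $\det(Qg_k) = \det(Qg_{q-1}^n) = -1$, so every letter of the normal form of $a$ that contains a factor $Q$ contributes $-1$ to $\det a$; hence $(-1)^{\varepsilon(a)} = \det a$ and
\[
 b_s^+(a) = \tau_s(a), \qquad b_s^-(a) = \det a \cdot \tau_s(a).
\]
It therefore suffices to prove the single identity
\[
 \Tr \tau_s(a) \;=\; \frac{N(a)^{-s}}{1-\det a \cdot N(a)^{-1}},
\]
since multiplying by $\det a$ then yields the formula for $\Tr b_s^-(a)$.

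For the trace computation, Proposition~\ref{hyp} identifies $a$ with a regular representative of a hyperbolic $\wt\Gamma_q$-conjugacy class, so $a^{-1}$ is a M\"obius transformation in $\PGL_2(\R)$ with two real fixed points. The same contraction estimates for M\"obius maps associated with regular words in $\Gen_{G^Q}$ that are used in \cite{Moeller_Pohl} for the untwisted operator $\mc L_{G,s}$ show that $\tau_s(a)\colon f \mapsto j_s(a^{-1},\cdot)\cdot f\circ a^{-1}$ acts on the relevant factor $B(\mc D_{q-1})$ or $B(\mc D_r)$ as a nuclear composition operator whose unique fixed point in the closed domain is the attracting fixed point $x_-$ of $a^{-1}$, i.e.\ the repelling fixed point of $a$. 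The holomorphic Lefschetz formula then yields
\[
 \Tr \tau_s(a) \;=\; \frac{j_s(a^{-1},x_-)}{1-(a^{-1})'(x_-)}.
\]

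The sign analysis is the only genuinely new ingredient compared with \cite{Moeller_Pohl}. For any $g = \textbmat{\alpha}{\beta}{\gamma}{\delta}\in\PGL_2(\R)$ the derivative $g'(t) = \det g \,/\, (\gamma t+\delta)^2$ has sign $\det g$ at every real $t$. Since $|a'(x_-)| = N(a)$ by definition of the norm, one gets $a'(x_-) = \det a \cdot N(a)$, and because $\det a = \pm 1$,
\[
 (a^{-1})'(x_-) = 1/a'(x_-) = \det a \cdot N(a)^{-1}, \qquad j_s(a^{-1},x_-) = |(a^{-1})'(x_-)|^s = N(a)^{-s}.
\]
Substituting these into the Atiyah--Bott formula yields the displayed trace identity, and hence both formulas of the lemma. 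Conjugacy invariance is immediate since the final expressions depend only on $N(a)$ and $\det a$, each of which is a $\wt\Gamma_q$-class function. The main technical point to verify is the nuclearity and interior-fixed-point property of step (ii); this is routine given the regularity of $a$ and proceeds in analogy with the $\mc L_{G,s}$-case in \cite{Moeller_Pohl}.
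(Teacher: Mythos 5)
Your argument is correct and is exactly the ``standard calculation'' the paper invokes: the paper's proof consists of a one-line citation to the general trace formula for composition operators (Ruelle), and your proposal simply carries that computation out, correctly identifying $(-1)^{\eps(a)}=\det a$, the contracting fixed point of $a^{-1}$, and the signs $j_s(a^{-1},x_-)=N(a)^{-s}$ and $(a^{-1})'(x_-)=\det a\cdot N(a)^{-1}$. No substantive difference from the paper's route.
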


\begin{proof}
This is a standard calculation, which follows from a general formula for traces of composition operators (see e.g.\@ \cite{Ruelle_zeta}).
\end{proof}

\begin{proof}[Proof of Theorem~\ref{mainodd}]
For $w\in\wt\Gamma_q$ hyperbolic, let $n(w)\in\N$ denote the unique number such that $w=h^{n(w)}$ for some primitive hyperbolic element $h\in \wt\Gamma_q$. Further recall that $\ell(w)$ denotes the length of $w$ in the sense of Proposition~\ref{hyp}. We have
\begin{align*}
\log Z_-(s) & = -\sum_{\ell=1}^\infty \frac{1}{\ell} \sum_{\stackrel{[w]\in [\wt\Gamma_q]_h}{\ell(w)=\ell}} \frac{\ell(w)}{n(w)} \cdot \frac{\det w\cdot N(w)^{-s}}{1-\det w\cdot N(w)^{-1}}.
\end{align*}
Let $[w]\in [\wt\Gamma_q]_h$ with $\ell\sceq \ell(w)$. From Proposition~\ref{hyp} and Lemma~\ref{trace} it follows that 
\[
 \sum_{\stackrel{a\in B_1^\ell\cup B_4^\ell}{[a]=[w]}} \Tr b_s^-(a) = \frac{\ell(w)}{n(w)} \cdot\frac{\det w\cdot N(w)^{-s}}{1-\det w\cdot N(w)^{-1}}.
\]
Therefore,
\begin{align*}
\log Z_-(s) & = -\sum_{\ell=1}^\infty \frac{1}{\ell} \sum_{a\in B_1^\ell\cup B_4^\ell} \Tr b_s^-(a) = -\sum_{\ell=1}^\infty \frac{1}{\ell} \Tr\left( \mc L_{G^Q,s}^-\right)^\ell.
\end{align*}
This completes the identity for $Z_-$. The statements on the meromorphic continuation follow immediately from Theorem~\ref{MPtwist}. The proof for $Z_+$ is analogous.
\end{proof}

\subsection{Zeta functions and eigenvalues}\label{Venkov_odd}

Venkov's Selberg-type zeta function for the Dirichlet boundary value problem is
\[
Z_-^V(s)  = \prod_{[g]\in [\Gamma_q]_p}\prod_{k=0}^\infty \left(1-N(g)^{-(s+k)}\right)^2 \cdot \prod_{\stackrel{[h]\in [\wt\Gamma_q]_p}{\det h=-1}} \prod_{\ell=0}^\infty \left(\frac{1+N(h)^{-(s+\ell)}}{1-N(h)^{-(s+\ell)}}\right)^{2(-1)^\ell},
\]
and the Selberg-type zeta function for the Neumann boundary value problem is
\[
Z_+^V(s) = \prod_{[g]\in [\Gamma_q]_p}\prod_{k=0}^\infty \left(1-N(g)^{-(s+k)}\right)^2 \cdot \prod_{\stackrel{[h]\in [\wt\Gamma_q]_p}{\det h=-1}} \prod_{\ell=0}^\infty \left(\frac{1-N(h)^{-(s+\ell)}}{1+N(h)^{-(s+\ell)}}\right)^{2(-1)^\ell}.
\]
The zeta functions $Z^V_\pm$ and $Z_\pm$ are closely related, as the following lemma shows. Here, we use $Z_\pm$ to also denote their meromorphic continuations.

\begin{lemma}\label{zetasident}
For all $s\in\C$, we have $Z^V_{\pm}(s)= Z_{\pm}(s)^4$. 
\end{lemma}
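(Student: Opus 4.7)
The plan is to establish two Euler-product identities between the Venkov zeta functions $Z^V_\pm$ and the dynamical zeta functions $Z_\pm$, together with the operator-theoretic factorization $Z=Z_+Z_-$, and then to deduce $Z^V_\pm=Z_\pm^4$ by a short algebraic manipulation.

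First I would observe that the matrix definitions of $\mc L^\pm_{G^Q,s}$ in Section~\ref{sec:fastodd} coincide, entry by entry, with those of $\mc L^\pm_{G,s}$ recorded in Theorem~\ref{MPtwist}, and that both operators act on the same Banach space $B(\mc D_{q-1})\times B(\mc D_r)$. Hence $\det(1-\mc L^\pm_{G^Q,s})=\det(1-\mc L^\pm_{G,s})$, and combining Theorem~\ref{mainodd} with Theorem~\ref{MPtwist}(iv) yields the identity $Z(s)=Z_+(s)Z_-(s)$ of meromorphic functions on all of $\C$.

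Next I would verify, directly from the Euler-product definitions for $\Re s>1$ and then extend by meromorphy, the two identities
\[
 Z^V_+(s)\cdot Z^V_-(s)=Z(s)^4 \qquad\text{and}\qquad \frac{Z^V_+(s)}{Z^V_-(s)}=\left(\frac{Z_+(s)}{Z_-(s)}\right)^4.
\]
The first is immediate: the factors in $Z^V_\pm$ indexed by $[h]\in[\wt\Gamma_q]_p$ with $\det h=-1$ are mutual reciprocals and cancel in the product, leaving $Z(s)^4$. For the second, the common factors in $Z_\pm$ indexed by $[g]\in[\wt\Gamma_q]_p$ with $\det g=1$ cancel in the ratio, and the remaining statement reduces to the elementary identity
\[
 \frac{1-(-1)^\ell a}{1+(-1)^\ell a}=\left(\frac{1-a}{1+a}\right)^{(-1)^\ell}
\]
applied with $a=N(h)^{-(s+\ell)}$ for $[h]$ running through $\det=-1$ primitive classes.

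Finally, multiplying the two Euler-product identities and substituting $Z=Z_+Z_-$ gives $(Z^V_+)^2=Z_+^8$ and analogously $(Z^V_-)^2=Z_-^8$, so $Z^V_\pm=\pm Z_\pm^4$ as meromorphic functions on $\C$. Since all four involved products are normalized to tend to $1$ as $\Re s\to+\infty$, the sign must be $+$. I do not anticipate any substantial obstacle: the only non-routine input is the visual identification $\mc L^\pm_{G^Q,s}=\mc L^\pm_{G,s}$, which is transparent from the printed matrices, and all remaining steps are formal algebra.
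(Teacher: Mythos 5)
Your proof is correct, but it takes a genuinely different route from the paper. The paper proves the identity by directly rewriting the Euler product of $Z^V_\pm$ using the explicit correspondence between $\Gamma_q$-primitive and $\wt\Gamma_q$-primitive hyperbolic conjugacy classes (a $\Gamma_q$-primitive class that is not $\wt\Gamma_q$-primitive is the square of a $\det=-1$ primitive class, while $\wt\Gamma_q$-primitive classes with $\det=1$ meet $\Gamma_q$ in exactly two $\Gamma_q$-classes $[g]$ and $[QgQ]$); this correspondence is extracted from the coding underlying Proposition~\ref{hyp}. You instead isolate the two ``symmetric'' identities $Z^V_+Z^V_-=Z^4$ and $Z^V_+/Z^V_-=(Z_+/Z_-)^4$, both of which follow from pure cancellation inside the Euler products with no class correspondence at all (both ratios are indexed by the same set of $\det=-1$ classes in $[\wt\Gamma_q]_p$), and you import the remaining input $Z=Z_+Z_-$ from the operator identities $\mc L^\pm_{G^Q,s}=\mc L^\pm_{G,s}$ together with Theorems~\ref{mainodd} and \ref{MPtwist}(iv). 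Your square-root extraction is sound: on the connected half-plane $\Rea s>1$ all products are nonvanishing and holomorphic, so $Z^V_\pm/Z_\pm^4$ is a holomorphic function with square identically $1$, hence constant, and the normalization as $\Rea s\to+\infty$ fixes the sign. What each approach buys: the paper's argument is independent of Theorem~\ref{mainodd} and yields the class correspondence as reusable information (it is recycled almost verbatim for even $q$ in Proposition~\ref{zetaseven}), whereas your argument is algebraically lighter but repackages rather than removes the dependency on the coding machinery, since Theorem~\ref{mainodd} itself rests on Proposition~\ref{hyp}; note also that your route does not adapt directly to even $q$, where the boundary class $[g_m]$, $[Qg_m]$ breaks the clean cancellations and forces the correction factor $Z^c_\pm$.
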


\begin{proof}
It suffices to prove the claimed identity for $\Rea s>1$. We only provide a proof of $Z^V_-(s) = Z_-(s)^4$. The argumentation for $Z^V_+$ and $Z_+$ is analogous. The proof is based on the relation between conjugacy classes of $\Gamma_q$-primitive hyperbolic elements and $\wt\Gamma_q$-primitive hyperbolic elements. Invoking their characterization in the proof of Proposition~\ref{hyp}, one easily sees that if $[g]_{\Gamma_q} \in [\Gamma_q]_p$ is not $\wt\Gamma_q$-primitive, then there is a primitive hyperbolic $h\in\wt\Gamma_q$ with $\det h=-1$ and $h^2=g$. If $[g]_{\Gamma_q}\in [\Gamma_q]_p$ and $g$ is $\wt\Gamma_q$-primitive, then also $QgQ$ is primitive hyperbolic and $[QgQ]_{\Gamma_q}\not=[g]_{\Gamma_q}$ but $[QgQ]_{\wt\Gamma_q} = [g]_{\wt\Gamma_q}$. Conversely, if $[h]_{\wt\Gamma_q} \in [\wt\Gamma_q]_p$ and $\det h=-1$, then $[h^2]_{\Gamma_q}\in [\Gamma_q]_p$. If $[h]_{\wt\Gamma_q} \in [\wt\Gamma_q]_p$ with $\det h = 1$, then $[h]_{\wt\Gamma_q}\cap \Gamma_q$ splits 
into the two distinct (primitive) classes $[h]_{\Gamma_q}, [QhQ]_{\Gamma_q}\in [\Gamma_q]_p$. Therefore we have
\begin{align*}
Z_-^V(s) &  = \prod_{\stackrel{[a]\in [\wt\Gamma_q]_p}{\det a = 1}} \prod_{k=0}^\infty \left(1-N(a)^{-(s+k)}\right)^4 \cdot \prod_{\stackrel{[b]\in [\wt\Gamma_q]_p}{\det b=-1}}\prod_{n=0}^\infty \left(1-N(b^2)^{-(s+n)}\right)^2 
\\
& \quad \times \prod_{\stackrel{[h]\in [\wt\Gamma_q]_p}{\det h=-1}} \prod_{\ell=0}^\infty \left(\frac{1+N(h)^{-(s+\ell)}}{1-N(h)^{-(s+\ell)}}\right)^{2(-1)^\ell}
\\
& = \prod_{\stackrel{[a]\in [\wt\Gamma_q]_p}{\det a = 1}} \prod_{k=0}^\infty \left(1-N(a)^{-(s+k)}\right)^4 \cdot  \prod_{\stackrel{[h]\in[\wt\Gamma_q]_p}{\det h=-1}}\prod_{\ell=0}^\infty \left(1-\det h^{\ell+1}\cdot N(h)^{-(s+\ell)}\right)^4
\\
& = Z_-(s)^4.
\end{align*}
This completes the proof.
\end{proof}

\begin{proof}[Proof of Theorem~\ref{mainintro} for odd $q$]
The claims are now immediately implied by Theorem~\ref{Venkov}, Lemma~\ref{zetasident} and Theorem~\ref{mainodd}.
\end{proof}

\section{Hecke triangle groups $\Gamma_q$ with $q$ even}\label{evendone}

For even $q$, the point $1\in\R$ is fixed by the hyperbolic element $g_{\frac{q}{2}}$ and
\[
 C'_{F^Q} \cap Q.C'_{F^Q} = C'_{G^Q} \cap Q.C'_{G^Q} = \{v\in C'_F \mid \gamma_v(\infty) = 1 \}.
\]
This nontrivial intersection is caused by the existence of boundary periodic geodesics, or, in more algebraic terms, the fact that the primitive hyperbolic $\wt\Gamma_q$-conjugacy classes $[g_{\frac{q}{2}}]$ and $[Qg_{\frac{q}{2}}]$ correspond to the same primitive periodic billiard on $\wt\Gamma_q\backslash\h$. To handle these non-uniquenesses in the relation between periodic billiards and hyperbolic $\wt\Gamma_q$-conjugacy classes correctly in a transfer operator approach, we have to allow double-labelings and non-unique coding sequences of the elements of the cross sections $\wh C_{F^Q}$ and $\wh C_{G^Q}$. For the arising (weighted) discrete dynamical systems this means that they are relations rather than functions. For notational purposes we use the triples
\[
 \text{(initial point, final point, weight)}.
\]
The weights will be of the form $c(\pm 1)^\eta$ for some $c>0$ and $\eta\in \{0,1\}$. The choice $\eta=1$ corresponds to Dirichlet boundary value conditions, and the choice $\eta=0$ corresponds to Neumann boundary value conditions. The value of $c$ takes care of the amount of multiple-coding. The underlying idea to handle the necessary double-coding in a natural way is to start by developing separately discrete dynamical systems (given by relations) for any fixed reasonable choice of sets of representatives and domains of definition and then averaging over all these possibilities. In this way we achieve a certain symmetrization of the construction.

The arising relations will be of the form
\[
 H = \{ (x,h.x, w_{I_h}) \mid x\in I_{h,\st},\ h\in J_H\},
\]
where $J_H$ is some subset of $\wt\Gamma_q$ and the $I_h$ are some intervals in $\R$. The associated transfer operator $\mc L_{H,s}$ with parameter $s\in\C$ is (formally) defined as
\[
 \mc L_{H,s}f(x) = \sum_{(h^{-1}.x, x, w)\in H} w \cdot \tau_s(h)f(x).
\]
Throughout let
\[
 m=\left\lfloor \frac{q+1}{2} \right\rfloor = \frac{q}2.
\]

\subsection{Slow discrete dynamical system}\label{sloweven}
We let
\[
 Z_1 \sceq (0,1]_\st \quad\text{and}\quad Z_2 \sceq (0,g_m^{-1}.0)_\st \cup [1,g_m^{-1}.\infty)_\st
\]
and set
\begin{align*}
C'_{F^Q,1} &\sceq C'_{F^Q} = \{ v\in C'_F \mid \gamma_v(\infty) \in Z_1\}
\intertext{and}
C'_{F^Q,2} &\sceq \{ v\in C'_F \mid \gamma_v(\infty) \in Z_2 \}.
\end{align*}
For $a,b\in\{1,2\}$ and $\eta\in \{0,1\}$ let $F^{Q,\eta}_{a,b}$ denote the relation which arises by using $C'_{F^Q,a}$ to induce a dynamical system on $Z_b$ with Dirichlet ($\eta = 1$) respectively Neumann ($\eta = 0$) boundary values. The symmetrized relation $F^{Q,\eta}$ (which provides the transfer operator) is then given by taking the union 
\[
F^{Q,\eta}_{1,1} \cup F^{Q,\eta}_{1,2} \cup F^{Q,\eta}_{2,1} \cup F^{Q,\eta}_{2,2}
\]
as relations, and averaging over the weights. By straightforward modifications of $(D_\st,F)$, we find
\begin{align*}
F^{Q,\eta}_{1,1} & = \big\{ (x, g_k.x, 1) \ \big\vert\ x\in\big(g_k^{-1}.0, g_k^{-1}g_m^{-1}.0\big)_\st,\ k=m,\ldots, q-1 \big\}
\\
& \quad\cup \big\{ (x, g_k.x, 2) \ \big\vert\ x\in\big(g_k^{-1}g_m^{-1}.0, g_k^{-1}.1\big)_\st,\ k=m,\ldots, q-1\big\}
\\
& \quad\cup \big\{ (x, Qg_k.x, (\pm 1)^\eta 2) \ \big\vert\ x\in \big(g_k^{-1}.1, g_k^{-1}g_m^{-1}.\infty\big)_\st,\ k=m+1,\ldots, q-1\big\}
\\
& \quad\cup \big\{ (x, Qg_k.x, (\pm 1)^\eta) \ \big\vert\ x\in \big(g_k^{-1}g_m^{-1}.\infty, g_k^{-1}.\infty\big)_\st,\ k=m+1,\ldots, q-1\big\}
\\
& \quad\cup \big\{ \big(g_k^{-1}.1,1,1\big), \big(g_k^{-1}.1, 1, (\pm 1)^\eta\big) \ \big\vert\ k=m+1,\ldots, q-1\big\}
\\
& \quad\cup \big\{ \big(g_m^{-1}.1,1,\tfrac12\big), \big(g_m^{-1}.1, 1, (\pm 1)^\eta\tfrac12\big) \big\},
\end{align*}
\begin{align*}
F^{Q,\eta}_{2,2} & = \big\{ (x,g_k.x,1) \ \big\vert\ x\in\big(g_k^{-1}.0, g_k^{-1}g_m^{-1}.0\big)_\st,\ k=m+1,\ldots, q-1\big\}
\\
& \quad\cup \big\{ (x, Qg_k.x, (\pm 1)^\eta 2) \ \big\vert\ x\in \big(g_k^{-1}g_m^{-1}.0, g_k^{-1}.1\big)_\st,\ k=m+1,\ldots, q-1\big\}
\\
& \quad\cup \big\{ (x,g_k.x, 2) \ \big\vert\ x\in\big(g_k^{-1}.1, g_k^{-1}g_m^{-1}.\infty\big)_\st,\ k=m+1,\ldots, q-1\big\}
\\
& \quad\cup \big\{ (x, Qg_k.x, (\pm 1)^\eta) \ \big\vert\ x\in \big(g_k^{-1}g_m^{-1}.\infty, g_k^{-1}.\infty\big)_\st,\ k=m+1,\ldots, q-1\big\}
\\
& \quad\cup \big\{ \big(g_k^{-1}.1,1,1\big), \big(g_k^{-1}.1,1,(\pm 1)^\eta\big) \ \big\vert\ k=m+1,\ldots, q-1\big\}
\\
& \quad\cup \big\{ (x, g_m.x, 2) \ \big\vert\ x\in\big(1,g_m^{-2}.\infty\big)_\st\big\}
\\
& \quad\cup \big\{ (x, Qg_m.x, (\pm 1)^\eta) \ \big\vert\ x\in \big(g_m^{-2}.\infty, g_m^{-1}.\infty\big)_\st\big\}
\\
& \quad\cup \big\{ \big(g_m^{-1}.1,1,\tfrac12\big), \big(g_m^{-1}.1,1,(\pm 1)^\eta\tfrac12\big) \big\},
\end{align*}
\begin{align*}
F^{Q,\eta}_{1,2} & = \big\{ (x,g_k.x,1) \ \big\vert\ x\in \big(g_k^{-1}.0, g_k^{-1}g_m^{-1}.0\big)_\st,\ k=m+1,\ldots, q-1\big\}
\\
& \quad \cup \big\{ (x, Qg_k.x, (\pm 1)^\eta 2) \ \big\vert\ x\in\big(g_k^{-1}g_m^{-1}.0,g_k^{-1}.1\big)_\st,\ k=m+1,\ldots, q-1 \big\}
\\
& \quad \cup \big\{ (x, g_k.x, 2) \ \big\vert\ x\in \big(g_k^{-1}.1, g_k^{-1}g_m^{-1}.\infty\big)_\st,\ k=m+1,\ldots, q-1\big\}
\\
& \quad \cup \big\{ (x, Qg_k.x, (\pm 1)^\eta) \ \big\vert\ x\in\big(g_k^{-1}g_m^{-1}.\infty, g_k^{-1}.\infty\big)_\st, k=m+1,\ldots, q-1\big\}
\\
& \quad \cup \big\{ \big(g_k^{-1}.1, 1, 1\big), \big(g_k^{-1}.1,1, (\pm 1)^\eta \big) \ \big\vert\ k=m+1,\ldots, q-1\big\}
\\
& \quad \cup \big\{ (x, g_m.x, 1) \ \big\vert\ x\in\big(g_m^{-1}.0,g_m^{-2}.0\big)_\st \big\}
\\
& \quad \cup \big\{ (x, Qg_m.x, (\pm 1)^\eta 2) \ \big\vert\ x\in\big(g_m^{-2}.0,1\big)_\st \big\}
\\
& \quad \cup \big\{ \big(g_m^{-1}.1, 1, \tfrac12\big), \big(g_m^{-1}.1,1, (\pm 1)^\eta \tfrac12\big) \big\},
\end{align*}
and
\begin{align*}
F^{Q,\eta}_{2,1} & = \big\{ (x,g_k.x, 1) \ \big\vert\ x\in\big(g_k^{-1}.0, g_k^{-1}g_m^{-1}.0\big)_\st,\ k=m+1,\ldots, q-1\big\}
\\
& \quad\cup \big\{ (x, g_k.x, 2) \ \big\vert\ x\in\big(g_k^{-1}g_m^{-1}.0, g_k^{-1}.1\big)_\st,\ k=m+1,\ldots, q-1\big\}
\\
& \quad\cup \big\{ (x, Qg_k.x, (\pm 1)^\eta 2) \ \big\vert\ x\in\big(g_k^{-1}.1, g_k^{-1}g_m^{-1}.\infty\big)_\st,\ k=m+1,\ldots, q-1\big\}
\\
& \quad\cup \big\{ (x, Qg_k.x, (\pm 1)^\eta) \ \big\vert\ x\in\big(g_k^{-1}g_m^{-1}.\infty, g_k^{-1}.\infty\big)_\st,\ k=m+1,\ldots, q-1\big\}
\\
& \quad\cup \big\{ \big(g_k^{-1}.1, 1, 1\big), \big(g_k^{-1}.1,1, (\pm 1)^\eta \big) \ \big\vert\ k=m+1,\ldots, q-1\big\}
\\
& \quad\cup \big\{ (x, Qg_m.x, (\pm 1)^\eta 2) \ \big\vert\ x\in \big(1,g_m^{-2}.\infty\big)_\st \big\}
\\
& \quad\cup \big\{ (x, Qg_m.x, (\pm 1)^\eta) \ \big\vert\ x\in \big(g_m^{-2}.\infty, g_m^{-1}.\infty\big)_\st \big\}
\\
& \quad\cup \big\{ \big(g_m^{-1}.1, 1, \tfrac12\big), \big(g_m^{-1}.1,1, (\pm 1)^\eta \tfrac12\big)\big\}.
\end{align*}
Therefore,
\begin{align*}
F^{Q,\eta} & = \big\{ (x, g_k.x, 1) \ \big\vert\ x\in\big(g_k^{-1}.0, g_k^{-1}g_m^{-1}.0\big)_\st,\ k=m+1,\ldots, q-1\big\}
\\
& \quad\cup \big\{ (x,g_k.x, 1) \ \big\vert\ x\in\big(g_k^{-1}g_m^{-1}.0, g_k^{-1}.1\big)_\st,\ k=m+1,\ldots, q-1\big\}
\\
& \quad\cup \big\{ (x, Qg_k.x, (\pm 1)^\eta) \ \big\vert\ x\in\big(g_k^{-1}g_m^{-1}.0, g_k^{-1}.1\big)_\st,\ k=m+1,\ldots, q-1\big\}
\\
& \quad\cup \big\{ (x, Qg_k.x, (\pm 1)^\eta) \ \big\vert\ x\in\big(g_k^{-1}.1, g_k^{-1}g_m^{-1}.\infty\big)_\st,\ k=m+1,\ldots, q-1\big\}
\\
& \quad\cup \big\{ (x, g_k.x, 1) \ \big\vert\ x\in \big(g_k^{-1}.1, g_k^{-1}g_m^{-1}.\infty\big)_\st,\ k=m+1,\ldots, q-1 \big\}
\\
& \quad\cup \big\{ (x, Qg_k.x, (\pm 1)^\eta) \ \big\vert\ x\in \big(g_k^{-1}g_m^{-1}.\infty, g_k^{-1}.\infty\big)_\st,\ k=m+1,\ldots, q-1\big\}
\\
& \quad\cup \big\{ \big(g_k^{-1}.1,1,1\big), \big(g_k^{-1}.1,1,(\pm 1)^\eta \big) \ \big\vert\ k=m+1,\ldots, q-1\big\}
\\
& \quad\cup \big\{ \big(x, g_m.x, \tfrac12\big) \ \big\vert\ x\in\big(g_m^{-1}.0, g_m^{-2}.0\big)_\st \big\}
\\
& \quad\cup \big\{ \big(x, g_m.x, \tfrac12\big) \ \big\vert\ x\in\big(g_m^{-2}.0, 1\big)_\st \big\}
\\
& \quad\cup \big\{ \big(x, Qg_m.x, (\pm 1)^\eta\tfrac12\big) \ \big\vert\ x\in \big(g_m^{-2}.0, 1\big)_\st \big\}
\\
& \quad\cup \big\{ \big(x, g_m.x, \tfrac12\big) \ \big\vert\ x\in\big(1, g_m^{-2}.\infty\big)_\st\big\}
\\
& \quad\cup \big\{ \big(x, Qg_m.x, (\pm 1)^\eta\tfrac12\big) \ \big\vert\ x\in \big(1, g_m^{-2}.\infty\big)_\st\big\}
\\
& \quad\cup \big\{ \big(x, Qg_m.x, (\pm 1)^\eta\tfrac12\big) \ \big\vert\ x\in \big(g_m^{-2}.\infty, g_m^{-1}.\infty\big)_\st \big\}
\\
& \quad\cup \big\{ \big(g_m^{-1}.1,1,\tfrac12\big), \big(g_m^{-1}.1,1,(\pm 1)^\eta \tfrac12\big) \big\}.
\end{align*}

We set $\mc L^+_{F^Q,s} \sceq \mc L_{F^{Q,0},s}$ and $\mc L^-_{F^Q,s} = \mc L_{F^{Q,1},s}$. Then the associated transfer operator with parameter $s\in\C$ becomes
\begin{align*}
\mc L^\pm_{F^Q,s} & = \sum_{k=m+1}^{q-1} \tau_s(g_k) \pm \tau_s(Qg_k)  + \frac12\tau_s(g_m) \pm \frac12\tau_s(Qg_m),
\end{align*}
defined on $\Fct( (0,g_m^{-1}.\infty)_\st;\C)$. We note that the elements of the form $(g_k^{-1}.1, 1, w)$ with its weights perfectly fill the gaps between the definitions on $(0,g_k^{-1}.1)_\st$ and $(g_k^{-1}.1,g_m^{-1}.\infty)_\st$. This formula for $\mc L^\pm_{F^Q,s}$ clearly makes sense as an operator on $\Fct( (0,g_m^{-1}.\infty);\C)$ and on $\Fct( \R_{>0};\C)$. As for odd $q$, we see that this construction provides a transfer operator interpretation of the functional equations for odd respectively even Maass cusp forms from Section~\ref{sec_known}.

\subsection{Fast discrete dynamical system}

When we modify the discrete dynamical system $(D_\st, G)$ analogously to the construction in Section~\ref{sloweven} to deduce relations $G^{Q,\eta}$ using
\begin{align*}
C'_{G^Q,1} & \sceq \{ v\in C'_G \mid \gamma_v(\infty) \in (0,1]_\st\}
\intertext{and}
C'_{G^Q,2} & \sceq \{ v\in C'_G \mid \gamma_v(\infty) \in (0,g_m^{-1}.0)_\st\cup [1,g_m^{-1}.\infty)_\st \}
\end{align*}
in place of $C'_{F^Q,1}$ and $C'_{F^Q,2}$, we find the transfer operator families 
\[
\mc L^{\pm}_{G^Q,s} \sceq 
\begin{pmatrix}
\pm \sum\limits_{n\in\N} \tau_s(Qg_{q-1}^n) & \frac12\tau_s(g_m) \pm \frac12\tau_s(Qg_m) + \sum\limits_{k=m+1}^{q-2} \tau_s(g_k) \pm \tau_s(Qg_k)
\\
\sum\limits_{n\in\N}\tau_s(g_{q-1}^n) \pm \tau_s(Qg_{q-1}^n) & \frac12\tau_s(g_m) \pm \frac12\tau_s(Qg_m) + \sum\limits_{k=m+1}^{q-2} \tau_s(g_k) \pm \tau_s(Qg_k) 
\end{pmatrix}.
\]

By Theorem~\ref{MPtwist}, for $\Rea s > \tfrac12$, the transfer operators $\mc L^\pm_{G^Q,s}$ are nuclear operators of order $0$ on the Banach space $B(\mc D_{q-1})\times B(\mc D_r)$. Moreover, the map $s\mapsto \mc L^\pm_{G^Q,s}$ extends to a meromorphic map on all of $\C$.

\subsection{Zeta functions and Fredholm determinants of transfer operators}
Let 
\[
 [\wt\Gamma_q]_{p,u}\sceq \left\{ [g]_{\wt\Gamma_q} \in [\wt\Gamma_q]_p \left\vert\ [g]_{\wt\Gamma_q} \not= [g_m]_{\wt\Gamma_q},\ [g]_{\wt\Gamma_q} \not= [Qg_m]_{\wt\Gamma_q} \right.\right\}.
\]
We define 
\begin{align*}
Z_-(s) &\sceq \prod_{[g] \in [\wt\Gamma_q]_{p,u}} \prod_{k=0}^\infty \left(1-\det g^{k+1} N(g)^{-(s+k)}\right) \cdot \prod_{\ell=0}^\infty \left(1-N(g_m)^{-(s+2\ell+1)}\right)
\intertext{and}
Z_+(s) &\sceq \prod_{[g]\in [\wt\Gamma_q]_{p,u}} \prod_{k=0}^\infty \left(1-\det g^k N(g)^{-(s+k)}\right)\cdot \prod_{\ell=0}^\infty \left(1-N(g_m)^{-(s+2\ell)}\right).
\end{align*}
These zeta functions converge absolutely for $\Rea s > 1$. Furthermore, we have $Z_+(s)Z_-(s) = Z(s)$ as it is immediately implied by the relation between conjugacy classes of $\Gamma_q$-primitive hyperbolic elements and $\wt\Gamma_q$-hyperbolic primitive elements and, in particular, the fact that, as noted in Section~\ref{sec_strategy}, the $\wt\Gamma_q$-conjugacy classes $[g_m]_{\wt\Gamma_q}$ and $[Qg_m]_{\wt\Gamma_q}$ correspond to the same periodic billiard on $\wt\Gamma_q\backslash\h$. Since we do not rely on this equality in any proofs, we refer to Propositions~\ref{hypeven} and \ref{zetaseven} for more details.

In this section we prove the following relation between the transfer operators $\mc L^\pm_{G^Q,s}$ and the zeta functions $Z_{\pm}$.

\begin{thm}\label{maineven}
For $\Rea s > 1$, we have $\det(1-\mc L^{\pm}_{G^Q,s}) = Z_{\pm}(s)$. Moreover, the zeta functions $Z_\pm$ extend meromorphically to all of $\C$ with possible poles at $s=(1-k)/2$, $k\in\N_0$.
\end{thm}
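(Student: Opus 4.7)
The strategy parallels the proof of Theorem~\ref{mainodd} but must accommodate two features specific to even $q$: the point $1$ is a fixed point of $g_m$, creating a boundary periodic billiard, and the identity $Qg_m=g_mQ$ (valid since $q-m=m$) produces a flip symmetry among word representatives. The plan is to expand $-\log\det(1-\mc L^\pm_{G^Q,s})=\sum_{n\in\N}\tfrac1n\Tr(\mc L^\pm_{G^Q,s})^n$, to analyze each trace as a weighted sum over regular reduced words of length $n$ over the alphabet $\Gen_{G^Q}\sceq\{g_m,\ldots,g_{q-2},Qg_m,\ldots,Qg_{q-2}\}\cup\{g_{q-1}^k,Qg_{q-1}^k:k\in\N\}$, and then to resum according to the $\wt\Gamma_q$-conjugacy class represented by each word. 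As in the odd case, the matrix structure of $\mc L^\pm_{G^Q,s}$ enforces both reducedness/regularity and a unique row-column assignment per word; each regular reduced word $a=a_1\cdots a_n$ will therefore contribute $2^{-c(a)}(\pm 1)^{\eps(a)}\Tr\tau_s(a_1\cdots a_n)$, where $c(a)$ counts occurrences of $g_m,Qg_m$ (reflecting the $\tfrac12$-weights) and $\eps(a)$ counts $Q$-prefixed letters. An analog of Lemma~\ref{trace} gives $\Tr\tau_s(h)=N(h)^{-s}/(1-\det h\cdot N(h)^{-1})$ for $h\in\wt\Gamma_q$ hyperbolic.

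I then split the regular reduced words of length $n$ into boundary words, those in $\{g_m,Qg_m\}^n$, and non-boundary words. Boundary words are automatically regular, and, by $Qg_m=g_mQ$, a word in $\{g_m,Qg_m\}^n$ with $k$ many $Qg_m$-letters equals $g_m^nQ^k$, hence represents $g_m^n$ for $k$ even and $Qg_m^n$ for $k$ odd, both of norm $N(g_m)^n$ and determinant $(-1)^k$. A direct binomial computation using $\sum_{k\,\mathrm{even}}\binom{n}{k}=\sum_{k\,\mathrm{odd}}\binom{n}{k}=2^{n-1}$ will show that the aggregate boundary contribution at level $n$ is $N(g_m)^{-n(s+1)}/(1-N(g_m)^{-2n})$ in the $-$ case and $N(g_m)^{-ns}/(1-N(g_m)^{-2n})$ in the $+$ case. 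Summed over $n$ with the factor $\tfrac1n$, these match precisely the logarithms of $\prod_\ell(1-N(g_m)^{-(s+2\ell+1)})$ and $\prod_\ell(1-N(g_m)^{-(s+2\ell)})$, i.e.\ the second factors of $Z_-$ and $Z_+$.

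For non-boundary words I will prove an analog of Proposition~\ref{hyp}, classifying regular reduced words up to cyclic permutation by the classes $[g]\in[\wt\Gamma_q]_{p,u}$ and their powers. The new feature is the flip symmetry: at any position $j$ carrying a letter in $\{g_m,Qg_m\}$, the relation $Qg_m=g_mQ$ combined with the general identity $Qg_k=g_{q-k}Q$ allows the simultaneous swap $a_j\leftrightarrow Qa_j$ together with a $Q$-toggle of the successor letter $a_{j+1}$ (which remains in $\Gen_{G^Q}$, since $\Gen_{G^Q}$ is closed under $x\mapsto Qx$), leaving the represented group element unchanged. These toggles at the $nc$ positions of $g_m/Qg_m$ in a representative of $g^n$ generate an elementary abelian $2$-group that acts freely on the set of regular representatives, producing $\ell(g)\cdot 2^{nc}$ of them in total, each of weight $2^{-nc}$. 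The total class weight is therefore $\ell(g)$, matching the odd case exactly. Since toggles change $\eps$ by even amounts, the sign $(\pm 1)^{\eps}$ is constant across flip-equivalent representatives and coincides with $\det g^n$ in the $-$ case, so the resummation argument from Theorem~\ref{mainodd} then yields the first factor of $Z_\pm$.

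Combining the two contributions gives $\det(1-\mc L^\pm_{G^Q,s})=Z_\pm(s)$ for $\Rea s>1$, and the meromorphic continuation of $Z_\pm$ with possible poles at $s=(1-k)/2$, $k\in\N_0$, follows at once from Theorem~\ref{MPtwist}(ii). The hardest step will be the precise combinatorial analysis of the flip symmetry for non-boundary words: verifying that the toggle group truly acts freely on representatives (so orbits have the full size $2^{nc}$), that toggles and cyclic shifts jointly yield exactly $\ell(g)\cdot 2^{nc}$ distinct representatives without double-counting, and that no additional representatives arise from unforeseen algebraic identities. This reduces to a combinatorial lemma on words over $\Gen_{G^Q}$ provable by induction on the word length using the commutation relation $Qg_k=g_{q-k}Q$, but the bookkeeping will require care to cover all configurations of adjacent $g_m/Qg_m$ letters.
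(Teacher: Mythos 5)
Your proposal is correct and follows essentially the same route as the paper: the same weights $2^{-k(a)}(\pm1)^{\eps(a)}\tau_s(a)$, the same split of regular reduced words into boundary words in $\{g_m,Qg_m\}^n$ (handled by the binomial identity, yielding the $\prod_\ell(1-N(g_m)^{-(s+2\ell+1)})$ resp.\@ $\prod_\ell(1-N(g_m)^{-(s+2\ell)})$ factors) and non-boundary words (where the $2^{k(w)}$-fold flip orbit cancels the weight $2^{-k(w)}$, reducing to the odd-$q$ resummation), and the same appeal to Theorem~\ref{MPtwist} for the continuation. The only divergence is in how you would establish the classification of regular representatives (the paper's Proposition~\ref{hypeven}): you propose a direct word-combinatorial induction via $Qg_k=g_{q-k}Q$, whereas the paper deduces it from the translation between $G$- and $G^Q$-coding sequences, with the $g_m\mapsto Qg_mQ$ ambiguity producing exactly the toggle group you describe.
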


Let 
\[
 \Gen_{G^Q} \sceq \{ g_m,\ldots, g_{q-2}, Qg_m,\ldots, Qg_{q-2}\} \cup \{g_{q-1}^k, Qg_{q-1}^k \mid k\in\N\}
\]
denote the set of generators of $G^{Q,\eta}$. For $h=h_1\ldots h_n$ with $n\in\N$ and $h_j\in \Gen_{G^Q}$ and $s\in\C$, we define
\[
 b_s^\pm(h) \sceq (\pm 1)^\eps \frac{1}{2^k} \tau_s(h),
\]
where 
\[
\eps = \eps(h) \sceq \#\left\{ j \in \{1,\ldots n\} \left\vert\ h_j \in \{Qg_m,\ldots, Qg_{q-2},Qg_{q-1}^\ell\mid \ell\in\N\} \right.\right\}
\]
and
\[
 k=k(h) \sceq \#\left\{ j \in \{1,\ldots n\} \left\vert\ h_j \in \{g_m,Qg_m\} \right.\right\}.
\]

We define the notions of reduced and regular words over the alphabet $\Gen_{G^Q}$ analogously as before, as well as the sets $W_n^\redu(\Gen_{G^Q})$ for $n\in\N$ and $W_*^\redu(\Gen_{G^Q})$. We remark that, e.g., $g_mQg_m$ and $Qg_mg_m$ are distinct words despite the fact that as elements in $\wt\Gamma_q$ they are identical. Moreover, they are both regular. We also use the notion $W_n^\redu(\{g_m, Qg_m\})$ and $W_*^\redu(\{g_m, Qg_m\})$ for the subset of words whose letters are restricted to $\{g_m, Qg_m\}$.

Analogously as before, for $n\in\N$, we let 
\begin{itemize}
\item $B_1^n$ denote the words in $W_n^\redu(\Gen_{G^Q})$ which end with $g_{q-1}^\ell$ or $Qg_{q-1}^\ell$ for some $\ell\in\N$ and do not begin with $g_{q-1}^k$ for any $k\in\N$,
\item $B_2^n$ denote the words in $W_n^\redu(\Gen_{G^Q})$ which end with an element of \[\{g_k, Qg_k\mid k=m,\ldots, q-2\}\] and do not begin with $g_{q-1}^k$ for any $k\in\N$,
\item $B_3^n$ denote the words in $W_n^\redu(\Gen_{G^Q})$ which end with $g_{q-1}^\ell$ or $Qg_{q-1}^\ell$ for some $\ell\in\N$, and
\item $B_4^n$ denote the words in $W_n^\redu(\Gen_{G^Q})$ which end with an element of \[\{g_k, Qg_k\mid k=m,\ldots, q-2\}.\]
\end{itemize}

A straightforward induction proves the following lemma.

\begin{lemma}
For $n\in\N$ we have
\[
 \left(\mc L^\pm_{G^Q,s}\right)^n =
\begin{pmatrix}
\sum\limits_{a\in B_1^n} b_s^\pm(a) &  \sum\limits_{a\in B_2^n} b_s^\pm(a)
\\
\sum\limits_{a\in B_3^n} b_s^\pm(a) & \sum\limits_{a\in B_4^n} b_s^\pm(a)
\end{pmatrix}.
\]
\end{lemma}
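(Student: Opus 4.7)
The plan is to prove this by induction on $n$, using $(\mc L^\pm_{G^Q,s})^{n+1} = \mc L^\pm_{G^Q,s} \cdot (\mc L^\pm_{G^Q,s})^n$ and checking each of the four matrix entries separately.

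For the base case $n=1$ I would read off each entry of $\mc L^\pm_{G^Q,s}$ and match it with the appropriate sum over $B^1_i$. For instance, $B_1^1$ consists of length-one words in $\Gen_{G^Q}$ ending in $g_{q-1}^\ell$ or $Qg_{q-1}^\ell$ and not beginning with $g_{q-1}^k$; the only such letters are $Qg_{q-1}^\ell$ with $\ell \in \N$, each contributing $b_s^\pm(Qg_{q-1}^\ell)=\pm\tau_s(Qg_{q-1}^\ell)$, which is exactly the $(1,1)$ entry of $\mc L^\pm_{G^Q,s}$. The other three positions are checked analogously, noting that single-letter words $g_m$ and $Qg_m$ carry the weight $\tfrac12$ (from $1/2^{k(h)}$ with $k(h)=1$), matching the $\tfrac12$ coefficients in the operator.

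For the inductive step I would expand
\[
 \big(\mc L^\pm_{G^Q,s} \cdot (\mc L^\pm_{G^Q,s})^n\big)_{ij} = \sum_{k=1}^2 (\mc L^\pm_{G^Q,s})_{ik} \cdot (\mc L^\pm_{G^Q,s})^n_{kj}
\]
and identify the resulting summands with words in $B_i^{n+1}$ split as first letter times tail. The crucial observation is that the column index $k$ of $\mc L^\pm_{G^Q,s}$ encodes precisely the type of the first letter: column $k=1$ supplies a first letter of the form $g_{q-1}^n$ or $Qg_{q-1}^n$ with $n\in\N$, while column $k=2$ supplies a first letter from $\{g_j,Qg_j : j=m,\ldots,q-2\}$. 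By the inductive hypothesis, the row index $k$ of $(\mc L^\pm_{G^Q,s})^n$ encodes the ``reduced'' restriction on the tail: row $k=1$ corresponds to tails not beginning with $g_{q-1}^k$ (i.e.\ $B_1^n\cup B_2^n$), and row $k=2$ to arbitrary beginnings. These two encodings align, since the forbidden 2-letter subwords are exactly $g_{q-1}^{m_1}g_{q-1}^{m_2}$ and $Qg_{q-1}^{m_1}g_{q-1}^{m_2}$: a column-1 first letter forces the tail to start with something other than $g_{q-1}^k$, whereas a column-2 first letter imposes no such constraint. The decomposition $h=h_1\cdot (h_2\cdots h_{n+1})$ of an element of $B_i^{n+1}$ is unique, so the matching is bijective.

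The matching of weights is immediate from the multiplicativity of $b_s^\pm$ under concatenation: both $\eps$ and $k$ are additive, and $\tau_s$ is a representation, so $b_s^\pm(h_1\cdots h_{n+1}) = b_s^\pm(h_1)\cdot b_s^\pm(h_2\cdots h_{n+1})$, with $b_s^\pm(h_1)$ for a single letter $h_1$ recovering exactly the coefficient of $\tau_s(h_1)$ in $\mc L^\pm_{G^Q,s}$ (including the $\tfrac12$-factors for $g_m$ and $Qg_m$). I do not foresee a serious obstacle---the text itself labels this ``a straightforward induction''---but the care-intensive point is verifying that the forbidden-subword condition distributes cleanly across the matrix index $k$, so that the sum over $k\in\{1,2\}$ partitions $B_i^{n+1}$ exactly (no omissions and no overcounting), and that the $\tfrac12$-weights on $g_m,Qg_m$ are accounted for by the $1/2^{k(h)}$ factor rather than producing duplicates.
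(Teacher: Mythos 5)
Your proposal is correct and is exactly the argument the paper intends: the paper dismisses this lemma with ``a straightforward induction,'' and your write-up supplies that induction, with the two care-intensive points (the alignment of the inner matrix index with the forbidden-subword condition at the junction of first letter and tail, and the multiplicativity of the weight $b_s^\pm$ including the $\tfrac12$-factors on $g_m, Qg_m$) correctly identified and handled. The only slight looseness is that in row $1$ the column-$1$ entry supplies only first letters of the form $Qg_{q-1}^\ell$ (not $g_{q-1}^\ell$), which is what enforces the ``does not begin with $g_{q-1}^k$'' condition defining $B_1^{n}$ and $B_2^{n}$, but your bijectivity claim already accounts for this.
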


We define
\begin{align*}
[\wt\Gamma_q]_{h,u} & \sceq \big\{ [g^n]_{\wt\Gamma_q} \ \big\vert\  [g]_{\wt\Gamma_q}\in [\wt\Gamma_q]_{p,u},\ n\in\N\big\},
\\
[\wt\Gamma_q]_{p,d} & \sceq \big\{ [g_m]_{\wt\Gamma_q}, [Qg_m]_{\wt\Gamma_q} \big\}, 
\\
[\wt\Gamma_q]_d & \sceq \big\{ [g_m^n]_{\wt\Gamma_q}, [(Qg_m)^n]_{\wt\Gamma_q} \ \big\vert\ n\in\N\big\}, \text{and}
\\
[\wt\Gamma_q]_h & \sceq [\wt\Gamma_q]_{h,u} \cup [\wt\Gamma_q]_d.
\end{align*}

The following proposition is the analog to Proposition~\ref{hyp}. It determines how many regular representatives in $W_*^\redu(\Gen_{G^Q})$ we find for a given element in $[\wt\Gamma_q]_h$.

\begin{prop}\label{hypeven}
\begin{enumerate}[{\rm (i)}]
\item Let $[w]\in [\wt\Gamma_q]_{h,u}$. Then there exists a regular word in the set $W_*^\redu(\Gen_{G^Q})$ which represents $w$. The length of this word is unique, say $\ell(w)$. Let $w_1\ldots w_{\ell(w)}$ be such a representative. Then
\[
 k(w) \sceq \#\left\{ j\in\{1,\ldots,\ell(w)\} \left\vert\ w_j\in\{g_m, Qg_m\} \right.\right\}
\]
does not depend on the choice of the representative. Let $[h]\in [\wt\Gamma_q]_{p,u}$ and $n\in\N$ be the unique elements such that $[h^n] = [w]$. Then there are exactly $2^{k(w)}\ell(h)$ regular representatives of $[w]$ in $W_*^\redu(\Gen_{G^Q})$. These representatives are given as follows:
Let $h_1\ldots h_{\ell(h)}$ be any regular representative of $[h]$ in $W_*^\redu(\Gen_{G^Q})$. Then any of its cyclic permutations $h_j\ldots h_{\ell(h)}h_1\ldots h_{j-1}$ is also a regular representative of $[h]$. This accounts for the factor $\ell(h)$ in the counting. Then 
\[
 w_1\ldots w_{\ell(w)} = (h_j\ldots h_{\ell(h)}h_1\ldots h_{j-1})^n
\]
is a regular representative of $[w]$ in $W_*^\redu(\Gen_{G^Q})$. Suppose that $w_p = g_m$ for some $p\in \{1,\ldots, \ell(w)\}$. If $p\not=\ell(w)$, then
\[
 w_1\ldots w_{p-1} (Qg_m) (Qw_{p+1}) w_{p+2}\ldots w_{\ell(w)}
\]
is also a regular representative of $[w]$. If $p=\ell(w)$, then
\[
 (Qw_1)w_2\ldots w_{\ell(w)-1} (Qg_m)
\]
is also a regular representative. An analogous modification is possible if $w_p=Qg_m$. This accounts for the factor $2^{k(w)}$ in the counting. All regular representatives arise in this way from any chosen first regular representative.
\item If $[w]\in [\wt\Gamma_q]_d$, then $[w]$ is represented by some (regular) word in the set $W_*^\redu(\{g_m, Qg_m\})$. The length of this word is independent of the choice of the representative, say it is $\ell(w)$. If $\det w = 1$ (resp.\@ $\det w=-1$), then $w$ is represented by any word in $W_*^\redu(\{g_m, Qg_m\})$ of length $\ell(w)$ with an even (resp.\@ odd) number of appearances of $Qg_m$. These are all  (regular) representatives of $[w]$ in $W_*^\redu(\Gen_{G^Q})$.
\item For $n\in\N$, the elements in $B_1^n\cup B_4^n$ are precisely the regular representatives in $W_*^\redu(\Gen_{G^Q})$ of all the elements in $[\wt\Gamma_q]_h$ of length $n$.
\end{enumerate}
\end{prop}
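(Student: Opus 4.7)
The plan is to carry the coding-theoretic machinery behind Proposition~\ref{hyp} into the even-$q$ setting. The new phenomenon is the boundary periodic geodesic through $1\in\R$: a vector $\wh v\in\wh C_{G^Q}$ with $\gamma_v(\infty)=1$ admits two preimages, one in $C'_{G^Q,1}\setminus Q.C'_{G^Q,1}$ and one in $C'_{G^Q,2}\setminus Q.C'_{G^Q,2}$, hence two natural labels; the primitive classes $[g_m]$ and $[Qg_m]$ then code the same billiard. First I would fix a labeling of $\wh C_{G^Q}$ parallel to Section~\ref{sec_strategy}: a vector with $\gamma_v(\infty)\ne 1$ receives a unique label from $\Gen_{G^Q}$, while a vector with $\gamma_v(\infty)=1$ receives two labels that differ by swapping $g_j\leftrightarrow Qg_j$ at that position and $Q$-twisting the following letter. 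The relation $Qg_k=g_{q-k}Q$ ensures that this swap preserves the product along one period, so the translation algorithm between $G$- and $G^Q$-coding sequences from the proof of Proposition~\ref{hyp} carries over verbatim, subject to a free binary choice at each boundary index.

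For part~(i), the correspondence between $\Gamma_q$- and $\wt\Gamma_q$-conjugacy classes from the proof of Lemma~\ref{zetasident} lets us pull any $[w]\in[\wt\Gamma_q]_{h,u}$ back to one or two $\Gamma_q$-hyperbolic classes (according to whether $\det w=-1$ or $\det w=1$), whose regular $\Gen_G$-representatives are enumerated by Proposition~\ref{hyp}. The translation algorithm above produces a $G^Q$-regular representative of $w$, and every such representative arises this way. The factor $\ell(h)$ accounts for cyclic rotations of the primitive underlying word exactly as in Proposition~\ref{hyp}. The factor $2^{k(w)}$ counts the free binary choices at the $k(w)$ boundary positions (each accompanied by the successor $Q$-twist), and the local modification described in the statement realizes each such swap. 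Independence of $k(w)$ from the chosen representative follows from its geometric meaning as the intersection number of the closed billiard trajectory representing $[w]$ with the axis of $g_m$.

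For part~(ii), the key identity is $Qg_m=g_{q-m}Q=g_mQ$ (since $q-m=m$), so $Q$ and $g_m$ commute. Any word in $\{g_m,Qg_m\}$ of length $\ell$ with $k$ occurrences of $Qg_m$ then evaluates to $g_m^\ell Q^k$: it is automatically regular (no $g_{q-1}$-letter appears), it lies in $\Gamma_q$ iff $k$ is even, and it coincides with $g_m^n$ or $(Qg_m)^n$ precisely in the parities asserted. Conversely, the closed billiard along the axis from $-1$ to $1$ meets $\wh C_{G^Q}$ only at $\pi_Q(\tfrac{\partial}{\partial x}\vert_i)$, whose labels are exactly $\{g_m,Qg_m\}$; by the coding correspondence from the setup step, any regular representative of an element in $[\wt\Gamma_q]_d$ can therefore only use these letters. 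Part~(iii) is then immediate: inspection of the four definitions shows that $B_1^n\cup B_4^n$ is the set of regular words in $W_n^\redu(\Gen_{G^Q})$, since regularity forbids a word from simultaneously beginning with $g_{q-1}^k$ and ending with $g_{q-1}^\ell$ or $Qg_{q-1}^\ell$; combined with (i) and (ii), this identifies $B_1^n\cup B_4^n$ with the regular representatives of length $n$ of elements of $[\wt\Gamma_q]_h$.

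The main obstacle is step~(i): verifying rigorously that the swap operation ($g_m\leftrightarrow Qg_m$ with a $Q$-twist of the successor) is well-defined on regular words, preserves the element of $\wt\Gamma_q$, produces distinct new representatives, and exhausts all $2^{k(w)}\ell(h)$ of them. The cyclic wrap-around case, in which the ``successor'' of the last letter is the first letter, requires particular care. Once this combinatorial bookkeeping is tight, the count follows from Proposition~\ref{hyp}.
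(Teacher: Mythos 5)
Your overall strategy --- translating $G$-coding sequences into $G^Q$-coding sequences, reducing the count to Proposition~\ref{hyp}, and handling the extra multiplicity by the swap $g_m\leftrightarrow Qg_m$ with a $Q$-twist of the successor (driven by $Qg_m=g_mQ$) --- is the same as the paper's, and your treatment of parts (ii) and (iii) is essentially in line with it. But there is a genuine gap in your setup for part (i): you stipulate that a vector of $\wh C_{G^Q}$ receives two labels only when $\gamma_v(\infty)=1$ and a unique label otherwise. For $[w]\in[\wt\Gamma_q]_{h,u}$ no crossing vector of the associated periodic billiard has forward endpoint $1$ (that would force a conjugate of $w$ to stabilize the axis of $g_m$), so under your labeling every such billiard would carry a single coding up to cyclic shift and you would obtain only $\ell(h)$ regular representatives; the factor $2^{k(w)}$ you invoke afterwards has no source in your construction. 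The mechanism the paper uses is different: the two admissible codomains $Z_1=(0,1]_\st$ and $Z_2=(0,g_m^{-1}.0)_\st\cup[1,g_m^{-1}.\infty)_\st$ (equivalently the representative sets $C'_{G^Q,1}$, $C'_{G^Q,2}$) differ on the \emph{whole} $g_m$-interval, not just at the point $1$, so the union relation $\bigcup_{a,b}G^{Q,\eta}_{a,b}$ is two-valued at every step whose $G$-label is $g_m$. Concretely, in the translation of a $G$-coding sequence each occurrence of $g_m$ may be kept as $g_m$ or replaced by $Qg_mQ$ with the trailing $Q$ absorbed into the next letter; this binary choice at each of the $k(w)$ such positions is what produces $2^{k(w)}$. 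Your closing remark that $k(w)$ is the intersection number of the billiard with the axis of $g_m$ shows you have the right geometric picture, but it contradicts your formal labeling, and the proof cannot go through without the multivaluedness on the whole $g_m$-interval.

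Two smaller points. First, an endpoint-$1$ vector has its two lifts in $C'_{G^Q,a}\cap Q.C'_{G^Q,a}$, not in the set differences you name. Second, in part (ii) the evaluation $g_m^{\ell}Q^{k}$ equals $g_m^{n}$ or $(Qg_m)^{n}$ only when the parities of $k$ and $\ell$ are compatible; this is precisely the point where one must separate $[\wt\Gamma_q]_d$ from the remaining hyperbolic classes supported on the boundary axis, and where the distinctness of the $2^{k(w)}\ell(h)$ words in part (i) can fail for words entirely over $\{g_m,Qg_m\}$, so it deserves an explicit argument rather than the phrase ``precisely in the parities asserted.''
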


\begin{proof}
We define $G^Q$-coding sequences for the elements of $\wh C_{G^Q}$ in the way as explained in Section~\ref{sec_strategy} but using each of the relations $G^{Q,\eta}_{a,b}$, $a,b\in\{1,2\}$, separately. This means that several vectors in $\wh C_{G^Q}$ are assigned multiple $G^Q$-coding sequences. The translation from $G$-coding sequences to $G^Q$-coding sequences works as explained in the proof of Proposition~\ref{hyp} with the difference that whenever the element $g_m$ appears in a $G$-coding sequence, the corresponding element for a $G^Q$-coding sequences can be $g_m$ or $Qg_m$, where in the latter case, the consecutive element is multiplied by $Q$. In other words, if $g_m$ appears in a $G$-coding sequence, then, in the transition to $G^Q$-coding sequences one can choose whether $g_m$ should stay  $g_m$ or being changed to $Qg_mQ$ (and the $Q$ distributed in the correct way to the symbols in the sequence). Now, if the $G$-coding sequence is periodic, then, among the associated $G^Q$-coding sequences, we 
find periodic and non-periodic ones. All of these encode the same periodic billiard. For the proof at hand one has to restrict to those periodic $G^Q$-coding sequences those period length corresponds to the chosen multiplicity of the periodic billiard and hence to the chosen representing $\wt\Gamma_q$-conjugacy class of hyperbolic elements. Taking into account these necessary twists, the proof is analogous to that of Proposition~\ref{hyp}.
\end{proof}

The proof of the following lemma is identical to that of Lemma~\ref{trace}.

\begin{lemma}\label{traceeven}
For $n\in\N$ and $a\in B_1^n\cup B_4^n$ we have
\[
 \Tr b_s^-(a) = \frac{\det a}{2^{k(a)}} \frac{N(a)^{-s}}{1-\det a\cdot N(a)^{-1}}
\]
and
\[
 \Tr b_s^+(a) = \frac{1}{2^{k(a)}} \frac{N(a)^{-s}}{1-\det a\cdot N(a)^{-1}}.
\]
The values of $\Tr b_s^{\pm}(a)$ are invariants for the conjugacy class $[a]_{\wt\Gamma_q}$.
\end{lemma}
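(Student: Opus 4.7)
The plan is to follow verbatim the strategy of the proof of Lemma~\ref{trace}, inserting the extra bookkeeping required by the combinatorial factor $2^{-k(a)}$ that now appears in the definition of $b_s^\pm$. The starting point is the standard Ruelle-type trace formula for a nuclear composition operator on a Banach space of holomorphic functions (see \cite{Ruelle_zeta}): if $a \in \wt\Gamma_q$ is hyperbolic with attracting fixed point $x_0$ lying in the interior of a disk $\mc D \subset \C$ into which $a$ maps strictly, then
\[
\Tr \tau_s(a) = \frac{((cx_0+d)^{-2})^s}{1-a'(x_0)},
\]
where $(c,d)$ denotes the bottom row of the matrix $a$. A short computation using $a'(x_0) = \det a \cdot (cx_0+d)^{-2}$ and $(cx_0+d)^{-2} = N(a)^{-1}$ (the absolute multiplier at the attracting fixed point) then gives
\[
\Tr \tau_s(a) = \frac{N(a)^{-s}}{1-\det a \cdot N(a)^{-1}}.
\]
By Proposition~\ref{hypeven}(iii), every $a \in B_1^n \cup B_4^n$ represents a hyperbolic $\wt\Gamma_q$-conjugacy class, and the construction of the fast coding guarantees that $a$ contracts the appropriate disk $\mc D_j$ onto its attracting fixed point, so the trace formula applies.

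Next I would insert the weight factors. Since $\det g_k = 1$ for every $g_k \in \Gamma_q$ and $\det Q = -1$, the count $\eps(a)$ of letters of $a$ containing a factor $Q$ satisfies $(-1)^{\eps(a)} = \det a$. Hence $(+1)^{\eps(a)} = 1$ while $(-1)^{\eps(a)} = \det a$. Multiplying $\Tr \tau_s(a)$ by $(\pm 1)^{\eps(a)} \cdot 2^{-k(a)}$ therefore yields
\[
\Tr b_s^+(a) = \frac{1}{2^{k(a)}} \frac{N(a)^{-s}}{1-\det a \cdot N(a)^{-1}}, \qquad \Tr b_s^-(a) = \frac{\det a}{2^{k(a)}} \frac{N(a)^{-s}}{1-\det a \cdot N(a)^{-1}},
\]
as claimed.

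Finally, for the class-invariance statement, $N(a)$ and $\det a$ are manifest conjugation invariants in $\wt\Gamma_q$. The invariance of $k(a)$ is precisely the content of Proposition~\ref{hypeven}(i) for classes $[a] \in [\wt\Gamma_q]_{h,u}$; for $[a] \in [\wt\Gamma_q]_d$, every regular representative lies in $W_*^\redu(\{g_m, Qg_m\})$ of the common length $\ell(a)$, so $k(a) = \ell(a)$ is forced. The one mildly delicate point, which I would regard as the main obstacle, is verifying the sign of the multiplier when $a$ is anti-holomorphic (so $\det a = -1$), so that $\det a$ enters correctly in the denominator; but this is the same verification already carried out in the proof of Lemma~\ref{trace}, and the additional factor $2^{-k(a)}$ plays no role in it.
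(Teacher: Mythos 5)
Your proposal is correct and follows essentially the same route as the paper, whose proof simply declares the argument identical to that of Lemma~\ref{trace}, i.e.\ the standard fixed-point trace formula for nuclear composition operators from \cite{Ruelle_zeta}; you merely make explicit the evaluation of the multiplier and cocycle at the fixed point, the identity $(-1)^{\eps(a)}=\det a$, and the passage of the factor $2^{-k(a)}$ through the trace. The conjugacy-invariance of $k(a)$ via Proposition~\ref{hypeven} is also exactly the intended justification.
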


\begin{proof}[Proof of Theorem~\ref{maineven}]
We have
\begin{align*}
\log Z_-(s) & = \sum_{[g]\in [\wt\Gamma_q]_{p,u}} \sum_{k=0}^\infty \log\left(1-\det g^{k+1}\cdot N(g)^{-(s+k)}\right)
\\
& \qquad + \sum_{\ell=0}^\infty \log\left(1- N(g_m)^{-(s+2\ell+1)}\right).
\end{align*}
Further,
\begin{align*}
\sum_{[g]\in [\wt\Gamma_q]_{p,u}} \sum_{k=0}^\infty &\log\left(1-\det g^{k+1} \cdot N(g)^{-(s+k)}\right) 
\\
& = -\sum_{\ell=1}^\infty \frac{1}{\ell} \sum_{\stackrel{[w]\in [\wt\Gamma_q]_{h,u}}{\ell(w)=\ell}} \frac{\ell(w)}{n(w)} \cdot\det w\cdot \frac{N(w)^{-s}}{1-\det w\cdot N(w)^{-1}}
\end{align*}
and
\begin{align*}
\sum_{\ell=0}^\infty \log\left(1- N(g_m)^{-(s+2\ell+1)}\right) & = -\sum_{p=1}^\infty \frac1p \cdot\frac{N(g_m^p)^{-(s+1)}}{1-N(g_m^p)^{-2}}.
\end{align*}
Let $[w]\in [\wt\Gamma_q]_{h,u}$, $\ell=\ell(w)$. From Proposition~\ref{hypeven} and Lemma~\ref{traceeven} it follows that
\[
 \sum_{\stackrel{a\in B_1^\ell\cup B_4^\ell}{[a]=[w]}} \Tr b_s^-(a) = 2^{k(w)}\cdot \frac{\ell(w)}{n(w)} \cdot \frac{\det w}{2^{k(w)}} \cdot\frac{N(w)^{-s}}{1-\det w\cdot N(w)^{-1}}.
\]
For any $p\in\N$ and $a\in W^\redu_p(\{g_m, Qg_m\})$ we have
\[
 N(a) = N(a^2)^{1/2} = N(g_m^{2p})^{1/2} = N(g_m^p).
\]
Again Proposition~\ref{hypeven} and Lemma~\ref{traceeven} yield
\begin{align*}
\sum_{\stackrel{a\in B_1^p\cup B_4^p}{a\in W^\redu_*(\{g_m,Qg_m\})}}\Tr b_s^-(a) &= 2^{p-1}\left(\frac{1}{2^p}\cdot \frac{(-1) N(g_m^p)^{-s}}{1+N(g_m^p)^{-1}} + \frac{1}{2^p}\cdot \frac{N(g_m^p)^{-s}}{1-N(g_m^p)^{-1}}\right)
\\
& = \frac{N(g_m^p)^{-(s+1)}}{1-N(g_m^p)^{-2}}.
\end{align*}
Therefore,
\[
 \log Z_-(s) = -\sum_{n=1}^\infty  \frac1n \sum_{a\in B_1^n\cup B_4^n} \Tr b_s^-(a) = -\sum_{n=1}^\infty \frac1n\Tr \left(\mc L_{G^Q,s}^-\right)^n.
\]
Hence, $Z_-(s) = \det(1-\mc L^-_{G^Q,s})$. The meromorphic continuation and the location of possible poles follows from Theorem~\ref{MPtwist}. This completes the proof for $Z_-$. The proof for $Z_+$ is analogous.
\end{proof}

\subsection{Zeta functions and eigenvalues}\label{Venkov_even}

For even $q$, Venkov's Selberg-type zeta function for the Dirichlet boundary value problem is
\begin{align*}
Z^V_-(s) &\sceq \prod_{[g]\in [\Gamma_q]_p} \prod_{k=0}^\infty \left( 1- N(g)^{-(s+k)}\right)^2 \cdot \prod_{\stackrel{[h]\in[\wt\Gamma_q]_{p,u}}{\det h = -1}} \prod_{\ell=0}^\infty \left( \frac{1+N(h)^{-(s+\ell)}}{1-N(h)^{-(s+\ell)}}\right)^{2(-1)^\ell} 
\\
& \quad\times \prod_{n=0}^\infty \left( \frac{1+N(Qg_m)^{-(s+n)}}{1-N(Qg_m)^{-(s+n)}}\right)^{(-1)^n},
\end{align*}
and the one for the Neumann boundary value problem is
\begin{align*}
Z^V_+(s) &\sceq \prod_{[g]\in [\Gamma_q]_p} \prod_{k=0}^\infty \left( 1- N(g)^{-(s+k)}\right)^2 \cdot \prod_{\stackrel{[h]\in[\wt\Gamma_q]_{p,u}}{\det h = -1}} \prod_{\ell=0}^\infty \left( \frac{1-N(h)^{-(s+\ell)}}{1+N(h)^{-(s+\ell)}}\right)^{2(-1)^\ell} 
\\
& \quad\times \prod_{n=0}^\infty \left( \frac{1-N(Qg_m)^{-(s+n)}}{1+N(Qg_m)^{-(s+n)}}\right)^{(-1)^n}.
\end{align*}
As for odd $q$, these zeta functions converge absolutely for $\Rea s>1$ and extend meromorphically to all of $\C$. We denote their meromorphic extensions also by $Z_\pm^V$, respectively. Due to the existence of the boundary periodic geodesic, the relation between the zeta functions $Z_\pm^V$ and $Z_\pm$ is not as strong as for odd $q$. Nevertheless, their relation still implies an analog of Lemma~\ref{zetasident}. To state and prove their relation we define the two zeta functions
\begin{align*}
Z^c_-(s) &\sceq \prod_{k=0}^\infty \left(1-N(g_m^2)^{-(s+k)}\right)^{(-1)^k}
\intertext{and}
Z^c_+(s) &\sceq \prod_{k=0}^\infty \left(1-N(g_m^2)^{-(s+k)}\right)^{(-1)^{k+1}}.
\end{align*}

As before, we denote the meromorphic continuations of $Z_\pm$ also by $Z_\pm$.

\begin{prop}\label{zetaseven}
Let $s\in\C$, $\Rea s>0$. Then $Z^c_{\pm}(s)$ is absolutely convergent and positive. Moreover, $Z^V_{\pm}(s) = Z_{\pm}(s)^4 Z^c_{\pm}(s)$. 
\end{prop}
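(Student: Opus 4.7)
The plan is to imitate the proof of Lemma~\ref{zetasident}, with additional care devoted to the boundary-periodic geodesic represented by the distinct primitive hyperbolic $\wt\Gamma_q$-conjugacy classes $[g_m]$ and $[Qg_m]$. Absolute convergence and nonvanishing of $Z^c_\pm(s)$ on $\Rea s>0$ is standard: since $N(g_m^2)=N(g_m)^2>1$, the quantity $|N(g_m^2)^{-(s+k)}|$ decays geometrically and $\sum_k|\log(1-N(g_m^2)^{-(s+k)})|$ converges. For real $s>0$ every factor lies in $(0,1)$, which gives the positivity claim.

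For the main identity I would first establish it for $\Rea s>1$, where every product converges absolutely, and then extend to $\C$ by meromorphic continuation, using Theorem~\ref{maineven} for $Z_\pm$ and Venkov's construction for $Z^V_\pm$. The geometric fact driving everything is that $Qg_mQ=g_m$ in $\PGL_2(\R)$: indeed $Q$ fixes both endpoints $\pm1$ of the axis of $g_m$ and also fixes $i$, so $Q$ acts as the identity on this axis and in particular preserves its orientation. Hence $Qg_mQ^{-1}$ is a hyperbolic element with the same axis, direction and translation length as $g_m$, so equal to $g_m$. This yields $(Qg_m)^2=g_m^2$ and $N(Qg_m)=N(g_m)$, and it also implies that $[g_m]_{\wt\Gamma_q}\cap\Gamma_q=[g_m]_{\Gamma_q}$ is a single $\Gamma_q$-class rather than two, and that no element of $[\Gamma_q]_p$ arises as a square of a representative of $[Qg_m]_{\wt\Gamma_q}$ (since $(Qg_m)^2=g_m^2$ is not $\Gamma_q$-primitive).

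Using these facts one partitions $[\Gamma_q]_p$, as in Lemma~\ref{zetasident}, into three types: (i) the singleton $\{[g_m]_{\Gamma_q}\}$; (ii) pairs $\{[h]_{\Gamma_q},[QhQ]_{\Gamma_q}\}$ coming from a single class $[h]_{\wt\Gamma_q}\in[\wt\Gamma_q]_{p,u}$ with $\det h=1$; (iii) singletons $\{[h^2]_{\Gamma_q}\}$ for $[h]_{\wt\Gamma_q}\in[\wt\Gamma_q]_{p,u}$ with $\det h=-1$. I substitute this decomposition into the first factor of $Z^V_-(s)$ and expand the type-(iii) factors via $1-x^2=(1-x)(1+x)$ with $x=N(h)^{-(s+k)}$. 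A term-by-term comparison of exponents then shows that the type-(ii) factors cancel exactly against the $\det=1$ part of $Z_-(s)^4$, and the type-(iii) factors, combined with the middle factor of $Z^V_-(s)$, cancel exactly against the $\det=-1$ part of $Z_-(s)^4$ (one must split the bookkeeping by the parity of $k$). What survives is the $g_m$-contribution to the first factor of $Z^V_-$, the last factor of $Z^V_-$ involving $N(Qg_m)$, and the reciprocal of the $g_m$-factor of $Z_-(s)^4$; applying $1-x^2=(1-x)(1+x)$ once more and using $N(Qg_m)=N(g_m)$ and $N(g_m)^2=N(g_m^2)$ collapses these residual factors to $\prod_k(1-N(g_m^2)^{-(s+k)})^{(-1)^k}=Z^c_-(s)$. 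The argument for $Z^V_+$ is formally identical with the alternating signs flipped. The one genuinely new step compared to Lemma~\ref{zetasident} — and the main potential pitfall — is the correct treatment of the $[g_m]$ and $[Qg_m]$ boundary contributions; once $Qg_mQ=g_m$ is in place the remainder is exponent arithmetic.
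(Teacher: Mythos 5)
Your proposal is correct and follows essentially the same route as the paper: the same classification of $[\Gamma_q]_p$ in terms of $[\wt\Gamma_q]_{p,u}\cup[\wt\Gamma_q]_{p,d}$ (with $Qg_mQ=g_m$, $(Qg_m)^2=g_m^2$ and $N(Qg_m)=N(g_m)$ as the key facts for the boundary classes), followed by the same Euler-product bookkeeping, including the parity split and the identity $1-x^2=(1-x)(1+x)$ that produces $Z^c_\pm$. The cancellations you describe are exactly those carried out in the paper's computation.
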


\begin{proof}
The absolut convergence and positivity of $Z^c_\pm(s)$ follows from $N(g_m^2)>1$. The relation between the elements of $[\Gamma_q]_p$ and $[\wt\Gamma_q]_{p,u} \cup [\wt\Gamma_q]_{p,d}$ is as follows:
\begin{enumerate}[1)]
\item If $[g]_{\Gamma_q} \in [\Gamma_q]_p$, $[g]_{\Gamma_q}\not=[g_m]_{\Gamma_q}$ and $g$ is not $\wt\Gamma_q$-primitive, then there exists  $[h]_{\wt\Gamma_q} \in [\wt\Gamma_q]_{p,u}$ with $\det h = -1$ and $[h^2]_{\Gamma_q} = [g]_{\Gamma_q}$.
\item If $[g]_{\Gamma_q} \in [\Gamma_q]_p$, $[g]_{\Gamma_q}\not=[g_m]_{\Gamma_q}$ and $g$ is $\wt\Gamma_q$-primitive, then $QgQ$ is also $\wt\Gamma_q$-primitive and $[g]_{\Gamma_q} \not= [QgQ]_{\Gamma_q}$, but $[g]_{\wt\Gamma_q} = [QgQ]_{\wt\Gamma_q}$.
\item The element $g_m$ is $\Gamma_q$-primitive and $\wt\Gamma_q$-primitive hyperbolic, and, since $g_m=Qg_mQ$, we have $[g_m]_{\Gamma_q} = [Qg_mQ]_{\Gamma_q}$ and $[g_m]_{\wt\Gamma_q} = [Qg_mQ]_{\wt\Gamma_q}$.
\item The element $Qg_m$ is $\wt\Gamma_q$-primitive with $\det Qg_m = -1$, but $(Qg_m)^2 = g_m^2$ is not $\Gamma_q$-primitive.
\item If $[h]_{\wt\Gamma_q} \in [\wt\Gamma_q]_{p,u}$ with $\det h=-1$, then $[h^2]_{\Gamma_q} \in [\Gamma_q]_p$.
\item If $[h]_{\wt\Gamma_q} \in [\wt\Gamma_q]_{p,u}$ with $\det h = 1$, then $[h]_{\wt\Gamma_q}\cap \Gamma_q$ splits into the two distinct classes $[h]_{\Gamma_q}, [QhQ]_{\Gamma_q}\in [\Gamma_q]_p$.
\end{enumerate}
Therefore we have
\begin{align*}
Z^V_-(s) &  = \prod_{\stackrel{[a]\in [\wt\Gamma_q]_{p,u}}{\det a = 1}} \prod_{k=0}^\infty \left(1-N(a)^{-(s+k)}\right)^4 \cdot \prod_{\stackrel{[b]\in [\wt\Gamma_q]_{p,u}}{\det b=-1}} \prod_{\nu=0}^\infty \left(1-N(b^2)^{-(s+\nu)}\right)^2
\\
& \quad\times \prod_{\mu=0}^\infty \left(1-N(g_m)^{-(s+\mu)}\right)^2 \cdot \prod_{\stackrel{[h]\in [\wt\Gamma_q]_{p,u}}{\det h  =-1}} \prod_{\ell=0}^\infty \left( \frac{1+N(h)^{-(s+\ell)}}{1-N(h)^{-(s+\ell)}}\right)^{2(-1)^\ell} 
\\
& \quad\times \prod_{n=0}^\infty \left( \frac{1+N(g_m)^{-(s+n)}}{1-N(g_m)^{-(s+n)}}\right)^{(-1)^n}
\\
& = \prod_{[g]\in [\wt\Gamma_q]_{p,u}} \prod_{k=0}^\infty \left(1-\det g^{k+1}\cdot N(g)^{-(s+k)}\right)^4 
\\
& \quad\times \prod_{n=0}^\infty \left(1-N(g_m)^{-(s+n)}\right)^2 \left( \frac{1+N(g_m)^{-(s+n)}}{1-N(g_m)^{-(s+n)}}\right)^{(-1)^n}
\\
& = Z_-(s)^4 Z^c_-(s).
\end{align*}
This completes the proof for $Z_-$. The consideration of $Z_+$ is analogous.
\end{proof}

\begin{proof}[Proof of Theorem~\ref{mainintro} for even $q$]
The statements follow from a direct composition of Theorem~\ref{Venkov}, Proposition~\ref{zetaseven} and Theorem~\ref{maineven}.
\end{proof}


\section{Concluding remarks}\label{conclusion}

Theorem~\ref{mainintro}, in connection with Theorem~\ref{Venkov}, allow to investigate the existence of odd and even Maass cusp forms via investigating the spectrum of the operators $\mc L^\pm_{G^Q,s}$. We postpone investigations of this kind to future work. In particular, the Phillips-Sarnak conjecture on the nonexistence of even Maass cusp forms for nonarithmetic Hecke triangle groups can be formulated equivalently as that for $\Rea s = \tfrac12$, the transfer operators $\mc L^+_{G^Q,s}$ (rather their meromorphic continuations in $s$) for the Hecke triangle groups $\Gamma_q$, $q\notin \{3,4,6\}$, do not have $1$-eigenfunctions in the Banach space $B(\mc D_{q-1})\times B(\mc D_r)$. 

Despite that, the conducted investigations have two by-products. From Lemma~\ref{zetasident} and Theorem~\ref{mainodd} respectively from Proposition~\ref{zetaseven} and Theorem~\ref{maineven} and the localization of poles for $Z^V_-$ in \cite{Venkov_book} it follows that the Fredholm determinants $\det(1-\mc L_{G^Q,s}^-)$ do not have a pole at $s=\tfrac12$. Moreover, for Hecke triangle groups $\Gamma_q$ with even $q$, the factorization of the Selberg zeta function $Z = Z_-\cdot Z_+$ is not identical to Venkov's factorization $Z^4 = Z^V_- \cdot Z^V_+$ (see Proposition~\ref{zetaseven}). The auxiliary zeta function $Z^c_-$, which links $Z_-$ with $Z^V_-$ and $Z_+$ with $Z^V_+$, has a zero for $s=0$. Therefore, while $Z_-$ and $Z^V_-$ as well as $Z_+$ and $Z^V_+$ have the same zeros and poles on $\Rea s>0$, this need not be true on $\Rea s\leq 0$. It is still an open question whether this transfer operator induced factorization has a spectral explanation.


\subsubsection*{Acknowledgement} The author would like to thank the referee for a thorough reading and helpful comments.

%

\providecommand{\bysame}{\leavevmode\hbox to3em{\hrulefill}\thinspace}
\providecommand{\MR}{\relax\ifhmode\unskip\space\fi MR }
\providecommand{\MRhref}[2]{%
  \href{http://www.ams.org/mathscinet-getitem?mr=#1}{#2}
}
\providecommand{\href}[2]{#2}

\end{document}